\numberwithin{equation}{section}
\newcommand{\qm}{\mathfrak{q}}
\newcommand{\tm}{\mathfrak{t}}
\newtheorem{proposition}{Proposition}[section]
\newtheorem{lemma}[proposition]{Lemma}
\newtheorem{corollary}[proposition]{Corollary}
\newtheorem{theorem}[proposition]{Theorem}
\newtheorem{conjecture}[proposition]{Conjecture}
\theoremstyle{definition}
\newtheorem{definition}[proposition]{Definition}
\newtheorem{properties}[proposition]{Properties}
\newtheorem{remark}[proposition]{Remark}
\begin{document}

\title[Refined Cauchy identity for spin Hall--Littlewood functions]
{Refined Cauchy identity for spin Hall--Littlewood symmetric rational functions}
\author{Leonid Petrov}

\date{}

\maketitle

\begin{abstract}
	Fully inhomogeneous spin Hall--Littlewood symmetric rational
	functions $\mathsf{F}_\lambda$ arise in the context of $\mathfrak{sl}(2)$ higher spin six vertex models,
	and are multiparameter deformations of the classical Hall--Littlewood symmetric polynomials.
	We obtain a refined Cauchy identity
	expressing a 
	weighted sum of the product of two $\mathsf{F}_\lambda$'s
	as a determinant. The determinant is of Izergin--Korepin type: it is
	the partition function of the six vertex model
	with suitably decorated domain wall boundary conditions.
	The proof of equality of two partition functions
	is based on the Yang--Baxter equation.

	We rewrite our Izergin--Korepin type determinant
	in a different form which includes one of the sets of variables in a completely symmetric way.
	This determinantal identity might be of independent interest,
	and also allows to directly link the spin Hall--Littlewood rational functions
	with (the Hall--Littlewood particular case of) the interpolation Macdonald polynomials.
	In a different direction, a Schur expansion of our Izergin--Korepin type determinant
	yields a deformation of Schur symmetric polynomials.
	
	In the spin-$\frac12$ specialization, our refined Cauchy identity leads to a summation identity for 
	eigenfunctions of the ASEP (Asymmetric Simple Exclusion Process), a celebrated stochastic
	interacting particle system
	in the Kardar--Parisi--Zhang universality class.
	This produces explicit integral formulas for certain multitime probabilities in ASEP.
\end{abstract}

\tableofcontents

\section{Introduction}
\label{sec:intro}

\subsection{Background}

This paper deals with summation identities for spin Hall--Littlewood symmetric
rational functions. These functions 
appeared in \cite{Borodin2014vertex}, \cite{BorodinPetrov2016inhom}
as 
partition functions (with boundary conditions of a rather general form)
of square lattice
vertex models possessing Yang--Baxter integrability
which is traced to the quantum group $U_q(\widehat{\mathfrak{sl}_2})$.

The spin Hall--Littlewood functions
may also be identified with Bethe Ansatz eigenfunctions 
of the higher spin six vertex model on $\mathbb{Z}$, cf. \cite[Ch. VII]{QISM_book}.
They also arise as eigenfunctions of certain stochastic particle systems
\cite{Povolotsky2013}, \cite{BCPS2014_arXiv_v4}, \cite{CorwinPetrov2015arXiv}.
In \cite{Borodin2014vertex}, \cite{BorodinPetrov2016inhom} and subsequent works
the spin Hall--Littlewood functions and their relatives are treated from the point of
view of the theory of symmetric functions.
A classical reference on symmetric functions in the book
\cite{Macdonald1995} where Schur, Hall--Littlewood, and Macdonald symmetric
polynomials
and symmetric functions (=~property understood symmetric polynomials
in infinitely many variables) are developed.

\medskip

One of the most important features 
common for many
families of symmetric polynomials
is a \emph{Cauchy type summation identity}. For example, the Hall--Littlewood polynomials
(\cite[Ch. III]{Macdonald1995}; we also briefly recall them in \Cref{sec:interp_HL} below)
satisfy the following Cauchy type identity:
\begin{equation}
	\label{eq:HL_Cauchy}
	\sum_{\lambda_1\ge \lambda_2\ge \ldots\ge \lambda_N\ge0 }
	P_\lambda^{HL}(u_1,\ldots,u_N;\tm )
	\,
	Q_\lambda^{HL}(v_1,\ldots,v_N;\tm )
	=
	\prod_{i,j=1}^{N}\frac{1-\tm u_iv_j}{1-u_iv_j},
\end{equation}
where the sum is over ordered $N$-tuples of integers $\lambda$.
In \cite{Borodin2014vertex}, 
\cite{BorodinPetrov2016inhom}
similar Cauchy type summation identities were proven 
for the spin Hall--Littlewood symmetric functions. 
The proofs are based on the Yang--Baxter equation for the higher spin six vertex model.
Namely, both sides are identified as certain 
partition functions\footnote{By a partition function we mean 
the sum of total weights of all configurations, where the total weight of a configuration is a 
product of local vertex weights viewed as ``Boltzmann weights''.}
which are equal thanks to a sequence of elementary Yang--Baxter steps.
The product in the right-hand side comes from the fact that the corresponding partition
function has only one nontrivial configuration.

\medskip

We are interested in \emph{refinements} of Cauchy type identities like \eqref{eq:HL_Cauchy}
which are obtained by inserting a new factor (depending on $\lambda$)
into each term in the sum over $\lambda$ in the left-hand side.
This complicates
the right-hand side: instead of a relatively simple product it becomes a determinant.

Refinements of Cauchy type identities 
for various symmetric functions
appeared since \cite{kirillov1999q},
\cite{warnaar2008bisymmetric}, see also
\cite{betea2016refined}, \cite{betea2015refined}.
In \cite{wheeler2015refined}, a
novel method for proving a number of refined Cauchy (and also Littlewood) type identities 
was introduced based on the Yang--Baxter equation. 
Namely, 
the determinant in the right-hand side of a refined identity
arises as 
the partition function of the six-vertex model with domain wall boundary 
conditions (or a suitable modification thereof). The domain wall six vertex partition function is
given by
the celebrated
Izergin--Korepin determinant \cite{Izergin1987},
\cite[Ch. VII.10]{QISM_book}. 
The particular
determinantal answer for each refined identity
is uniquely identified with the help of Lagrange interpolation.

\begin{remark}
	After completing this manuscript, 
	another refined summation identity for the spin 
	Hall--Littlewood functions --- a Littlewood-type identity --- 
	was proven in \cite{gavrilova2021refined}.
	See also 
	\cite{chen2021stable}
	for probabilistic applications of Littlewood-type identities
	for the spin Hall--Littlewood functions.
\end{remark}

\subsection{Refined Cauchy identity for spin Hall--Littlewood functions}

Our first main result is a lifting of the refined Cauchy identity for Hall--Littlewood polynomials
to the spin Hall--Littlewood level. Namely, we consider the fully inhomogeneous 
spin Hall--Littlewood 
symmetric rational functions
introduced in \cite{BorodinPetrov2016inhom} which have the following explicit
symmetrization form:
\begin{equation*}
	\begin{split}
		\mathsf{F}_\lambda(u_1,\ldots,u_N )
		&=
		\sum_{\sigma\in S_N}
		\sigma
		\Biggl( 
			\prod_{1\le i<j\le N}\frac{u_i-q u_j}{u_i-u_j}\,
			\prod_{i=1}^{N}
			\biggl(
				\frac{1-q}{1-s_{\lambda_i}\xi_{\lambda_i}u_i}
				\prod_{j=0}^{\lambda_i-1}\frac{\xi_j u_i-s_j}{1-\xi_j s_j u_i}
			\biggr)
		\Biggr),
	\end{split}
\end{equation*}
where 
$\lambda=(\lambda_1\ge\ldots\ge \lambda_N\ge0 )$.
Here $\sigma\in S_N$
acts by permuting the $u_i$'s and not the $\lambda_i$'s.
We employ the same convention about permutation action
in all symmetrization formulas
throughout the text.
The function $\mathsf{F}_\lambda$ depends on the ``quantum parameter''
$q$, the variables $u_j$, and the inhomogeneities
$\xi_x$ and $s_x$, where $x\in \mathbb{Z}_{\ge0}$.
These parameters are assumed to be 
generic complex numbers. When required, for certain statements
(like in \Cref{thm:intro_sHL_refined} below)
we impose additional conditions on the parameters.
If $s_x=0$ and $\xi_x=1$ for all $x$, then the 
functions $\mathsf{F}_\lambda$ reduce to the usual
Hall--Littlewood symmetric polynomials.

To formulate the result we need more notation. Let
$m_0(\lambda)$ is the number of parts in $\lambda$ equal to zero (so that $\lambda_{N-m_0(\lambda)}>0$
and $\lambda_{N-m_0(\lambda)+1}=0$), and 
$(a;q)_k=(1-a)(1-aq)\ldots(1-aq^{k-1})$ be the $q$-Pochhammer symbol.
We also employ
dual spin Hall--Littlewood
functions $\mathsf{F}_\lambda^*$ which 
are given by a symmetrization expression similarly to $\mathsf{F}_\lambda$,
see formulas \eqref{eq:F_Fstar_conjugation}, \eqref{eq:F_star_explicit} in the text.

\begin{theorem}[Refined Cauchy identity for spin Hall--Littlewood functions]
	\label{thm:intro_sHL_refined}
	For any $\gamma\ne0$ and provided that $u_i,v_j,s_x,\xi_x$
	satisfy certain conditions so that the series in the left-hand side converges 
	(see \eqref{eq:admissible_u_v} in the text), we have
	\begin{align}
		\nonumber
			&
			\sum_{\lambda=(\lambda_1\ge \lambda_2\ge \ldots\ge\lambda_N\ge0 )}
			\frac{(\gamma q;q)_{m_0(\lambda)}(\gamma^{-1}s_0^2;q)_{m_0(\lambda)}}{(q;q)_{m_0(\lambda)}(s_0^2;q)_{m_0(\lambda)}}
			\,
			\mathsf{F}_{\lambda}(u_1,\ldots,u_N )\,
			\mathsf{F}^*_{\lambda}(v_1,\ldots,v_N )
			\\&\hspace{30pt}=
			\prod_{j=1}^{N}
			\frac{1}{(1-s_0\xi_0 u_j)(1-\xi_0^{-1}s_0v_j)}\,
			\frac{\prod_{i,j=1}^{N}(1-qu_iv_j)}{\prod_{1\le i<j\le N}(u_i-u_j)(v_i-v_j)}
		\label{eq:intro_sHL_refined}
			\\&\hspace{56pt}\times
			\det\left[
				\frac{(1-\gamma)(q-\gamma^{-1} s_0^2)(1-u_iv_j)+(1-q)(1-\xi_0s_0 u_i)(1-\xi_0^{-1}s_0v_j)}
					{(1-u_iv_j)(1-qu_iv_j)}
				\right]_{i,j=1}^{N}.
				\nonumber
	\end{align}
\end{theorem}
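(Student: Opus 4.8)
The plan is to realize both sides of \eqref{eq:intro_sHL_refined} as partition functions of the fully inhomogeneous higher spin six vertex model and to pass from one to the other through the Yang--Baxter equation, evaluating the resulting partition function by a (decorated) Izergin--Korepin determinant, exactly in the spirit of \cite{wheeler2015refined}. Recall from \cite{BorodinPetrov2016inhom} that $\mathsf{F}_\lambda(u_1,\ldots,u_N)$ is the partition function on the strip $\mathbb{Z}_{\ge0}\times\{1,\ldots,N\}$ with the $i$-th row carrying spectral parameter $u_i$, the column at $x$ carrying $(\xi_x,s_x)$, a single path entering each row from the left, no paths from below, and the top boundary frozen to the occupation vector of $\lambda$; the dual function $\mathsf{F}^*_\lambda(v_1,\ldots,v_N)$ has an analogous description with the conjugated weights of \eqref{eq:F_Fstar_conjugation}, \eqref{eq:F_star_explicit}, arranged so that its $\lambda$-boundary matches that of $\mathsf{F}_\lambda$. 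Then $\mathsf{F}_\lambda(u)\mathsf{F}^*_\lambda(v)$ is the partition function of the two strips glued along the common $\lambda$-boundary, and summing over $\lambda$ against the prescribed weight produces the partition function of a single combined model --- provided the refinement factor $\tfrac{(\gamma q;q)_{m_0(\lambda)}(\gamma^{-1}s_0^2;q)_{m_0(\lambda)}}{(q;q)_{m_0(\lambda)}(s_0^2;q)_{m_0(\lambda)}}$ is itself realized as a local weight at the distinguished column $x=0$, with $m_0(\lambda)$ playing the role of the number of through-paths there. Recognizing this $q$-Pochhammer ratio as the partition function of one extra vertex (equivalently, the action of a single $L$-operator with spectral parameter governed by $\gamma$) compatible with the Yang--Baxter equation is the essential preliminary step.

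Granting this, the core of the argument is the standard ``train'' manipulation: insert between the $u$-rows and the $v$-rows an auxiliary $R$-matrix, drag it across the lattice one elementary Yang--Baxter step at a time, and read off the transformed boundary data. After the full sequence the glued partition function collapses onto an $N\times N$ grid of $u$-lines against $v$-lines carrying decorated domain wall boundary conditions, accompanied by the extra $\gamma$-line and the frozen column at $x=0$; these last two are exactly what generate the scalar prefactors $\prod_j\tfrac{1}{(1-s_0\xi_0 u_j)(1-\xi_0^{-1}s_0 v_j)}$ and $\tfrac{\prod_{i,j}(1-qu_iv_j)}{\prod_{i<j}(u_i-u_j)(v_i-v_j)}$ in \eqref{eq:intro_sHL_refined}, the Vandermonde denominators reflecting the symmetrization inherent in $\mathsf{F}_\lambda,\mathsf{F}^*_\lambda$.

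It remains to evaluate the decorated domain wall partition function. Following \cite{wheeler2015refined}, I would note that, once the explicit prefactors are stripped off, it is a rational function of (say) $u_1$ with poles only at $u_1v_j=1$ and $u_1v_j=q^{-1}$, of controlled growth at $u_1\to\infty$, symmetric under simultaneous permutations of $(u_i)$ and of $(v_j)$, and obeying a size-reduction recursion when $u_1v_1\to1$; in the decorated setting these properties (Korepin's lemma) still characterize it uniquely, and one verifies that the right-hand side determinant of \eqref{eq:intro_sHL_refined} satisfies them, the precise $\gamma$-dependence of the determinant entries being fixed by Lagrange interpolation in $\gamma$ (the value $\gamma=1$ degenerating to the unrefined spin Hall--Littlewood Cauchy identity of \cite{BorodinPetrov2016inhom}, and $\gamma=s_0^2$ truncating the sum to $\lambda$ with no zero parts). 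Finally, the identity is first established in the region \eqref{eq:admissible_u_v} where the left-hand side converges, and then holds as an equality of rational functions.

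The step I expect to be the main obstacle is the preliminary one: pinning down a genuine, Yang--Baxter--compatible vertex weight at the special column $x=0$ whose path-counting partition function reproduces the refinement factor $\tfrac{(\gamma q;q)_{m_0(\lambda)}(\gamma^{-1}s_0^2;q)_{m_0(\lambda)}}{(q;q)_{m_0(\lambda)}(s_0^2;q)_{m_0(\lambda)}}$, given that this column already carries the distinguished inhomogeneities $\xi_0,s_0$. Once that fused/decorated boundary object is in place, the train argument and the Izergin--Korepin evaluation are essentially bookkeeping, though bookkeeping that must faithfully track all the inhomogeneities $\xi_x,s_x$.
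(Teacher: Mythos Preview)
Your overall architecture --- realize both sides as partition functions, equate them via Yang--Baxter, and evaluate the decorated domain wall side by a Korepin--Izergin style Lagrange interpolation --- is exactly the paper's approach. Two points in your plan, however, diverge from what actually works.

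First, the realization of the refinement factor. It is not the partition function of ``one extra vertex'' or a single $L$-operator with a $\gamma$-dependent spectral parameter; there is no such operator compatible with the existing Yang--Baxter structure. What the paper does instead is insert $a$ extra vertical arrows into the column at $x=0$, with $\gamma=q^{a}$, and then rescale the spin parameter there as $s_0\to\gamma^{-1}s_0$. Tracking the column-$0$ weights $w_{u_i\xi_0,s_0}$ and $w^*_{v_j/\xi_0,s_0}$ with this shifted occupation number produces exactly the factor $\tfrac{(\gamma q;q)_{m_0}(\gamma^{-1}s_0^2;q)_{m_0}}{(q;q)_{m_0}(s_0^2;q)_{m_0}}$ together with the scalar $\prod_j(1-s_0\xi_0u_j)^{-1}(1-s_0\xi_0^{-1}v_j)^{-1}$. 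So the ``obstacle'' you flag is resolved by a column decoration (extra arrows plus spin rescaling), not an extra line; once you see this, there is no new Yang--Baxter compatibility to check.

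Second, the evaluation of the decorated domain wall partition function. The symmetry, degree bound, and $u_1=v_1^{-1}$ recursion you list give only $N$ interpolation conditions for a degree-$N$ polynomial in $u_1$, leaving one undetermined constant at each size; this is \emph{not} fixed by ``Lagrange interpolation in $\gamma$'' (the $\gamma$-dependence is degree $N$ in $\gamma^{-1}$, and your two special values $\gamma=1$, $\gamma=s_0^2$ do not suffice). The paper instead supplies one more evaluation property: at the geometric specialization $u_j=q^{j-1}/(s_0\xi_0\gamma)$ the partition function factorizes completely. Verifying that the right-hand side determinant satisfies this property is the genuinely new step relative to \cite{wheeler2015refined}; it requires computing a nontrivial auxiliary determinant in product form (the paper's Lemma~\ref{lemma:u0_det_computation}), and this is where the bookkeeping becomes substantive.
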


Observe that 
a priori 
the left-hand side of \eqref{eq:intro_sHL_refined}
should depend on the parameters
$s_x,\xi_x$ for all $x\in \mathbb{Z}_{\ge0}$.
However, it turns out that the dependence
on all of them except $(s_0,\xi_0)$ is artificial and 
disappears  after summing over $\lambda$.

We prove \Cref{thm:intro_sHL_refined} 
in \Cref{sec:sHL_proof} with the help of the Yang--Baxter equation.
The Izergin--Korepin type determinant in the right-hand side of 
\eqref{eq:intro_sHL_refined}
comes from 
the partition function of the six vertex model
with suitably decorated domain wall boundary conditions,
and is uniquely identified 
with the help of Lagrange interpolation.
That is, we formulate a number of properties which follows from the 
description of the six vertex partition function, and check that the 
determinant satisfies these properties (uniqueness is guaranteed by Lagrange interpolation).
This approach to proving the refined identity is essentially parallel to \cite{wheeler2015refined}. 
However, one of the steps in identifying 
the Izergin--Korepin type determinant
turns out to be more involved and requires to compute a nontrivial auxiliary determinant
which evaluates in a product form
(see \Cref{lemma:u0_det_computation} in the text).

\subsection{Determinantal identity}

Our next result is an alternative determinantal expression for the right-hand
side of the refined Cauchy identity \eqref{eq:intro_sHL_refined}. 
Namely, we observe that the (suitably normalized)
determinant in the right-hand side of \eqref{eq:intro_sHL_refined} is a skew-symmetric polynomial 
in $v_1,\ldots,v_N $. As such, it should be divisible by the product
$\prod_{i<j}(v_i-v_j)$ in the denominator. Remarkably, the
ratio admits a nice determinantal form which includes all the variables $v_1,\ldots,v_N $
in a manifestly symmetric way:

\begin{theorem}
	\label{thm:intro_det_identity}
	The following identity of $N\times N$ determinants holds:
	\begin{equation}
		\label{eq:intro_det_identity}
		\begin{split}
			&
			\frac{\prod_{i,j=1}^{N}(1-qu_iv_j)}{\prod_{1\le i<j\le N}(v_i-v_j)}
			\,\det\left[
				\frac{(1-\gamma)(q-\gamma^{-1} s_0^2)(1-u_iv_j)+(1-q)(1-\xi_0s_0 u_i)(1-\xi_0^{-1}s_0v_j)}
					{(1-u_iv_j)(1-qu_iv_j)}
			\right]_{i,j=1}^{N}
			\\&\hspace{20pt}=
			\det
			\Biggl[ 
			u_j^{N-i-1}
				\biggl\{
					\left( 1-s_0\xi_0 u_j \right)
					\left( u_j-\frac{s_0}{\xi_0} \right)
					\prod_{l=1}^{N}\frac{1-q u_jv_l}{1-u_jv_l}
					\\
					&\hspace{200pt}
					-
					\gamma^{-1}q^{N-i}
					(\gamma-s_0\xi_0 u_j)
					\left( \gamma q u_j-\frac{s_0}{\xi_0} \right)
				\biggr\}
			\Biggr]_{i,j=1}^{N}.
		\end{split}
	\end{equation}
\end{theorem}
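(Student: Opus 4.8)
The plan is to prove \eqref{eq:intro_det_identity} by induction on $N$, comparing the two sides as rational functions of the single variable $u_1$, with the remaining variables generic. Write $L$ and $R$ for the left- and right-hand sides. On the right, let $\Phi_i(u)$ denote $u^{N-i-1}$ times the braced expression in the $(i,j)$ entry, so that $R=\det[\Phi_i(u_j)]_{i,j=1}^{N}$; on the left abbreviate the Izergin--Korepin entry as $M_{ij}=[A(1-u_iv_j)+B_iC_j]\,/\,[(1-u_iv_j)(1-qu_iv_j)]$ with $A=(1-\gamma)(q-\gamma^{-1}s_0^2)$, $B_i=(1-q)(1-\xi_0s_0u_i)$, $C_j=1-\xi_0^{-1}s_0v_j$. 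The first point is that $L$ and $R$ are \emph{skew-symmetric} in $u_1,\dots,u_N$ and \emph{symmetric} in $v_1,\dots,v_N$: on $L$ the prefactor $\prod_{i,j}(1-qu_iv_j)$ is bisymmetric, $\det[M_{ij}]$ changes sign under a transposition of the $u$'s (row swap) and of the $v$'s (column swap), and the $v$-sign is absorbed by $\prod_{i<j}(v_i-v_j)$; on $R$ every matrix entry is invariant under permutations of the $v$'s (the factor $\prod_l\tfrac{1-qu_jv_l}{1-u_jv_l}$ is, and the rest does not involve $v$), while $\det[\Phi_i(u_j)]$ changes sign under a transposition of the $u$'s. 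Hence it suffices to single out $u_1$ and $v_1$ and argue in $u_1$.

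As a function of $u_1$, each side is rational with simple poles exactly at $u_1=v_1^{-1},\dots,v_N^{-1}$: on $L$ the factor $\prod_j(1-qu_1v_j)$ of the prefactor kills the would-be poles at $u_1=(qv_l)^{-1}$ coming from the denominators of the $M_{1j}$, and on $R$ the only poles are those of $\prod_l\tfrac{1-qu_1v_l}{1-u_1v_l}$. Both sides are moreover regular at $u_1=0$ (on $R$ this uses that the braced expression defining $\Phi_N$ vanishes at $u=0$, so the apparent $u^{-1}$ in the last row is spurious) and are $O(u_1^{N-1})$ as $u_1\to\infty$ (on $R$ the top, $u_1^N$, coefficient of $\Phi_1(u_1)$ cancels), and both vanish whenever $u_1=u_k$ for some $k\ge2$. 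Therefore on either side $u_1\mapsto(\text{side})/\prod_{k\ge2}(u_1-u_k)$ is a rational function of $u_1$ that is $O(1)$ at infinity, regular at $0$, and has simple poles only at the $v_l^{-1}$; such a function is pinned down by its $N$ residues at $u_1=v_l^{-1}$ together with one more datum, say its leading behaviour as $u_1\to\infty$. So it remains to match these $N+1$ data between $L$ and $R$.

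The base case $N=1$ is an elementary algebraic identity in $u_1,v_1$. The heart of the argument is the matching of residues; by $v$-symmetry it is enough to take the residue at $u_1=v_1^{-1}$. On $L$ only $M_{11}$ is singular there, so $\mathrm{Res}_{u_1=v_1^{-1}}L$ equals an explicit scalar times the minor $\det[M_{ij}]_{i,j=2}^{N}$, which is exactly the left-hand side of \eqref{eq:intro_det_identity} for the $N-1$ variables $u_2,\dots,u_N$, $v_2,\dots,v_N$. On $R$, a direct computation gives $\mathrm{Res}_{u_1=v_1^{-1}}\Phi_i(u_1)=\kappa\,v_1^{\,i}$ with $\kappa$ independent of $i$, so $\mathrm{Res}_{u_1=v_1^{-1}}R$ equals $\kappa$ times the determinant whose first column is the geometric vector $(v_1,v_1^2,\dots,v_1^N)^{\mathsf T}$ and whose other columns are $(\Phi_i(u_j))_i$, $j=2,\dots,N$. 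The task is then to identify this determinant, up to an explicit scalar, with the right-hand side of \eqref{eq:intro_det_identity} in the variables $u_2,\dots,u_N$, $v_2,\dots,v_N$; comparing the two explicit prefactors, the induction hypothesis then closes the loop. I expect this last identification — converting the geometric first column together with the surviving columns, which still carry the full product $\prod_{l=1}^N\tfrac{1-qu_jv_l}{1-u_jv_l}$, into the smaller $\Phi$-determinant carrying only $\prod_{l=2}^N$ — to be the main obstacle; it is a determinantal counterpart of the auxiliary product-evaluation determinant \Cref{lemma:u0_det_computation} that already enters the proof of \Cref{thm:intro_sHL_refined}, and will presumably require a comparable one-shot determinant computation. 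The remaining $(N+1)$-st datum is then handled by a parallel but simpler argument: comparing the top $u_1\to\infty$ coefficients (which reduce to lower-rank determinants), or, once all residues agree, observing that $L-R$ is skew-symmetric in the $u$'s, has degree $\le N-1$ in each $u_i$, and no poles, hence $L-R=\tilde c(v)\prod_{i<j}(u_i-u_j)$ for a single function $\tilde c$ which is forced to vanish by one further evaluation. (An alternative that sidesteps the single-variable reduction is to expand $R=\det[\Phi_i(u_j)]$ by column-multilinearity over subsets $S\subseteq\{1,\dots,N\}$ — each summand being, after factoring, an explicit product times a Vandermonde determinant in $\{u_j^{-1}:j\in S\}\cup\{(qu_j)^{-1}:j\notin S\}$ — expand $L$ in parallel via partial fractions and the Cauchy determinant, and match term by term; this is more symmetric but computationally heavier.)
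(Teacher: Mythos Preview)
Your interpolation skeleton is exactly the paper's own argument in \Cref{sec:det_identity_proof}: both sides, suitably normalized, are pinned down by the recurrence at $u_1=v_l^{-1}$ (your residues; the paper's Property~\textbf{3}) together with one extra datum (your ``$(N{+}1)$-st datum''; the paper's Property~\textbf{4}), plus the base case $N=1$. So the route is not new.

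Where you go wrong is in the difficulty assessment --- and this matters, because the step you wave off is where the content lies. The residue identification you call ``the main obstacle'' is in fact short: once the first column becomes the geometric vector $(v_1^i)_i$, the single row operation $R_{i-1}\mapsto R_{i-1}-(\text{const})\cdot R_i$ kills that column above the diagonal \emph{and}, because inside the braces of $\Phi_i(u)$ the only $i$-dependence is the lone factor $q^{N-i}$ (the ``product'' piece is $i$-free), the same operation converts each remaining column from the rank-$N$ entry carrying $\prod_{l=1}^N$ into the rank-$(N{-}1)$ entry carrying $\prod_{l=2}^N$. This is \Cref{lemma:M_det_3}; no auxiliary product-form determinant is required here, and it is not the analogue of \Cref{lemma:u0_det_computation}.

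The genuine gap is your ``one further evaluation''. After matching all residues you have $L-R=\tilde c\cdot\prod_{i<j}(u_i-u_j)$, and you must produce a single $(u_1,\dots,u_N)$ with nonvanishing Vandermonde at which $L=R$. Neither the $u_1\to\infty$ coefficient nor $u_1=0$ reduces to a lower-rank instance of the identity (two rows of the first column contribute at top order, not one), so induction does not close there. The paper's choice is $u_j=q^{j-1}/(s_0\xi_0\gamma)$: at this point the $\Phi$-determinant collapses to a \emph{tridiagonal} determinant, and its product evaluation (\Cref{lemma:M_det_4}) is obtained by recognizing the entries as the three-term recurrence coefficients of $q$-Krawtchouk orthogonal polynomials and reading off the eigenvalues. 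That computation --- the $R$-side counterpart of \Cref{lemma:u0_det_computation} --- is the heart of the proof, and your outline neither supplies it nor proposes a substitute.
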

We prove
\Cref{thm:intro_det_identity}
in \Cref{sec:det_identity_proof}
by checking that the (suitably normalized)
right-hand side of \eqref{eq:intro_det_identity}
satisfies properties which identify the answer by Lagrange interpolation.
This involves computing 
another nontrivial auxiliary determinant.
This determinant is
related to the tridiagonal matrix 
of three-term relation coefficients for
the $q$-Krawtchouk orthogonal polynomials
\cite[Section 3.15]{Koekoek1996}. Using this connection,
we are able to evaluate the determinant in a product form
(see \Cref{lemma:M_det_4} in the text).

\subsection{Relation to interpolation Hall--Littlewood polynomials}

The determinantal identity of \Cref{thm:intro_det_identity}
generalizes the one from
\cite[Section 4.5]{cuenca2018interpolation};
the latter is recovered by setting $s_0=0$ in 
\eqref{eq:intro_det_identity}.

The paper \cite{cuenca2018interpolation}
in particular deals with the Hall--Littlewood particular
case $F_\lambda^{HL}$ of interpolation Macdonald symmetric polynomials 
\cite{knop1996symmetric}, \cite{okounkov1997binomial},
\cite{sahi1996interpolation},
and derives a refined Cauchy identity for them.
In that identity, the determinant in the right-hand side is like the one in the right-hand
side of \eqref{eq:intro_det_identity}.
Combining this with the Izergin--Korepin type determinant from \cite{wheeler2015refined}
leads to a determinantal identity given in
\cite[Section 4.5]{cuenca2018interpolation}.

This connection between determinants
suggests a direct limit transition from the 
fully inhomogeneous spin Hall--Littlewood symmetric rational functions
$\mathsf{F}_\lambda$
to the inhomogeneous Hall--Littlewood polynomials $F_\lambda^{HL}$
(\Cref{prop:sHL_to_iHL} in the text).
This resolves a question asked in 
\cite{olshanski2019interpolation} and \cite{cuenca2018interpolation}
about connections between interpolation Hall--Littlewood polynomials and 
spin Hall--Littlewood functions. We also obtain an explicit symmetrization
formula for the dual interpolation Hall--Littlewood functions $G_\lambda^{HL}$
(\Cref{prop:dual_iHL_symmetrization} in the text).
The functions $G_\lambda^{HL}$ together with 
$F_\lambda^{HL}$
satisfy a Cauchy identity with a product form right-hand side.

\subsection{Schur expansion}

The right-hand side of the refined Cauchy identity 
\eqref{eq:intro_sHL_refined} is symmetric in $v_1,\ldots,v_N $.
Looking into its
Taylor series expansion
into the Schur symmetric polynomials
$s_\lambda(v_1,\ldots,v_N )$ leads to a sum of the form
$\sum_\lambda \mathsf{C}_\lambda(u_1,\ldots,u_N )\,s_\lambda(v_1,\ldots,v_N )$,
where $\mathsf{C}_\lambda$ are new symmetric polynomials
which are deformations of the Schur polynomials. 
Using both determinants in \eqref{eq:intro_det_identity} and orthogonality of Schur polynomials,
we obtain two formulas for them:
\begin{equation}
	\label{eq:generalized_JT_identity}
	\begin{split}
		\mathsf{C}_\lambda(u_1,\ldots,u_N )&=
		\frac{\det[\mathsf{c}_{\lambda_i+N-i}(u_j)]_{i,j=1}^{N}}{\prod_{1\le i<j\le N}(u_i-u_j)}
		\\&=
		\det\Bigl[
			A^-_{\lambda_j}h_{\lambda_j+i-j-1}
			+
			A^0_{\lambda_j}
			h_{\lambda_j+i-j}
			+
			A^+_{\lambda_j}
			h_{\lambda_j+i-j+1}
		\Bigr]_{i,j=1}^N,
	\end{split}
\end{equation}
where $\mathsf{c}_k(u)$ is a certain three-term linear combination of $u^{k-1},u^k,u^{k+1}$ (with coefficients
depending on $k$),
and $A_k^{\pm}, A_k^0$ are also explicit 
(see formulas \eqref{eq:small_c_function} and \eqref{eq:C_lambda_H}
in the text).
Here $h_k=h_k(u_1,\ldots,u_N )$ are the complete homogeneous symmetric polynomials.
Identity \eqref{eq:generalized_JT_identity}
might be called a generalized Jacobi--Trudi formula, 
cf. \cite{sergeev2014jacobi}, \cite{harnad2018symmetric}.

In the particular case $s_0=0$
we have the proportionality
$\mathsf{C}_\lambda=
s_\lambda
\prod_{j=1}^{N}(1-(\gamma q)q^{\lambda_j+N-j})$. This connects our
refined Cauchy identity of \Cref{thm:intro_sHL_refined} for $s_0=0$
to a refined Cauchy identity for Schur polynomials.
The latter is known to generalize to the full Macdonald level
\cite{warnaar2008bisymmetric},
and is related to Macdonald probability measures on partitions \cite{borodin2016stochastic_MM}.
We discuss this probabilistic connection in \Cref{sub:measures_on_partitions}.

\subsection{Application to ASEP eigenfunctions}

The original motivation for this work was to explain
determinants arising from summing eigenfunctions of the ASEP
(Asymmetric Simple Exclusion Process)
recently observed by Corwin and Liu 
\cite{LiuCorwinPrivate2019}. 
In \Cref{sec:6v_reduction}
we show how our refined Cauchy identity of \Cref{thm:intro_sHL_refined}
reduces to a summation identity for ASEP eigenfunctions. 
In particular, this leads to contour
integral formulas for certain multitime distributions in ASEP.
We illustrate this by writing down the
two-time 
distribution of the first particle in ASEP
$\mathrm{Prob}\bigl(x_1(t_1)\ge k_1,\ x_1(t_2)\ge k_2\bigr)$
(\Cref{thm:ASEP_two_time} in the text).
This formula involves the
Izergin--Korepin
type determinant under the integral, and so it is not clear
at this point whether it is amenable to asymptotic analysis.
The conjectural asymptotic behavior (at least for fixed $q$ not going to $1$)
of these formulas should essentially match the behavior in the simpler case of TASEP
(i.e., ASEP with particle jumps in only one direction).
For 
TASEP,
multitime formulas and their asymptotics were recently studied in
\cite{JohanssonRahman2019},
\cite{liu2019multi}.

\subsection{Notation}
\label{sub:notation}

The main quantization parameter is denoted by $q$ throughout the paper
and is assumed to belong to $(0,1)$.
When dealing with the usual Macdonald and 
Hall--Littlewood symmetric polynomials,
the deformation parameters are denoted by $(\qm,\tm)$ and $\tm$, respectively.
We always 
make explicit comments
about renaming our main parameter $q$ to the Hall--Littlewood parameter $\tm$.

The identities we obtain in the paper are valid
for generic complex values of parameters entering them
(sometimes under restrictions imposed so that certain series
converge). Vanishing of denominators may make some of the formulas meaningless,
but here we do not discuss necessary modifications which might restore some
formulas under such degenerations.

The indicator of an event or condition $A$ is denoted by $\mathbf{1}_A$.
We use the $q$-Pochhammer symbol notation
\begin{equation}
	\label{eq:q_Pochhammer}
	(a;q)_k  = (1-a)(1-aq)\ldots(1-aq^{k-1}),
	\qquad 
	(a;q)_0 = 1.
\end{equation}
Since $q\in(0,1)$, the infinite $q$-Pochhammer symbol $(a;q)_\infty=\prod_{j=1}^{\infty}(1-aq^{j-1})$
also makes sense. 

By $E\vert_{a\to b}$ we denote the substitution of $a$ into $b$
everywhere
in an expression $E$.

A nonnegative \emph{signature}
of \emph{length} $N$ is an weakly decreasing sequence of integers
\begin{equation*}
	\lambda=(\lambda_1\ge \lambda_2\ge \ldots\ge \lambda_N\ge0 ).
\end{equation*}
Denote the set of nonnegative signatures of length $N$ by $\mathrm{Sign}_N$.
Throughout most of the paper we need only nonnegative signatures and so omit the word ``nonnegative''.
We will use the notation $|\lambda| = \lambda_1+\ldots+\lambda_N $.

We often need the \emph{multiplicative} notation for signatures, 
$\lambda=(0^{m_0(\lambda)}1^{m_1(\lambda)}2^{m_2(\lambda)}\ldots )$,
where $m_j(\lambda)$ is the number of parts in $\lambda$ which are equal to $j$.
Note that $\sum_{j\ge0}m_j(\lambda)=N$.
By $\ell(\lambda)$ we denote the number of nonzero parts in $\lambda$, so that
$m_0(\lambda)=N-\ell(\lambda)$.

\subsection{Organization of the paper}

In \Cref{sec:vertex_and_sHL} we recall the basic notation,
definition, and some properties of the spin Hall--Littlewood rational symmetric functions.
In \Cref{sec:sHL_proof} we prove the refined Cauchy identity 
for the spin Hall--Littlewood functions
(\Cref{thm:intro_sHL_refined}).
In \Cref{sec:det_identity_proof}
we derive an alternative determinantal expression for the right-hand side of the
refined Cauchy identity.
In \Cref{sec:interp_HL} we discuss connections of our spin Hall--Littlewood functions
with the Hall--Littlewood degeneration of the Macdonald interpolation symmetric polynomials. 
In \Cref{sec:expansions_IK} we derive a Schur expansion of the determinant in the right-hand
side of the Cauchy identity, and connect these results to measures on partitions.
Finally, 
in \Cref{sec:6v_reduction}
we specialize the spin Hall--Littlewood functions to
eigenfunctions of the ASEP (Asymmetric Simple Exclusion Process), which
leads to determinantal summation formulas discovered earlier by Corwin and Liu
\cite{LiuCorwinPrivate2019}.

\subsection{Acknowledgments}

I~am very grateful to Zhipeng Liu who pointed to me 
the emergence of determinants in sums of eigenfunctions of ASEP
observed in the ongoing work \cite{LiuCorwinPrivate2019}. Understanding this phenomenon in the full
generality of spin Hall--Littlewood functions was the initial impulse for this paper.
I~am also grateful to 
Alexei Borodin, Filippo Colomo, Cesar Cuenca, and Grigori Olshanski for helpful conversations.
The work was
partially supported by the NSF grant DMS-1664617.

\section{Vertex weights and spin Hall--Littlewood functions}
\label{sec:vertex_and_sHL}

In this section we recall our main objects --- the higher spin six vertex model
weights, the Yang--Baxter equation for them, and
spin Hall--Littlewood symmetric functions.
This section closely follows the earlier works
\cite{BorodinPetrov2016inhom}, \cite{BufetovMucciconiPetrov2018}.

\subsection{Definition of vertex weights}

Here we recall the higher spin six vertex model weights $w_{u,s}$
from \cite{Borodin2014vertex}, \cite{BorodinPetrov2016inhom}.
They depend on $q$, the spectral parameter $u$, and
the spin parameter $s$.
They are given in \Cref{fig:w_weights}.
These weights $w_{u,s}(i_1,j_1;i_2,j_2)$ satisfy
arrow preservation: they vanish unless $i_1+j_1=i_2+j_2$.
Due to this arrow preservation, we will think that 
occupied edges form \emph{up-right paths} which can meet at a vertex.
There is at most one path allowed per each horizontal edge (spin-$\tfrac12$ situation),
while at each vertical edge an arbitrary number of paths is allowed.
\begin{figure}[htpb]
	\centering
	\includegraphics{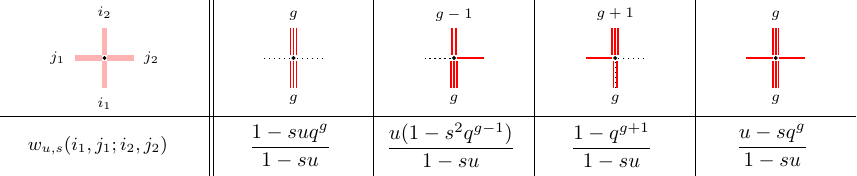}
	\caption{Vertex weights $w_{u,s}$. Here $i_1,i_2\in \mathbb{Z}_{\ge0}$
	and $j_1,j_2\in \left\{ 0,1 \right\}$.}
	\label{fig:w_weights}
\end{figure}

We will also use dual weights $w_{v,s}^*$ 
defined as
\begin{equation}
	\label{eq:w_wstar_relation}
	w_{v,s}^*(i_1,j_1;i_2,j_2)=\frac{(s^2;q)_{i_1}(q;q)_{i_2}}{(s^2;q)_{i_2}(q;q)_{i_1}}\,w_{v,s}(i_2,j_1;i_1,j_2).
\end{equation}
They are given in \Cref{fig:w_star_weights}.
The arrow preservation here reads $i_2+j_1=i_1+j_2$, and so we think of 
occupied edges as forming \emph{up-left} paths.

\begin{figure}[htpb]
	\centering
	\includegraphics{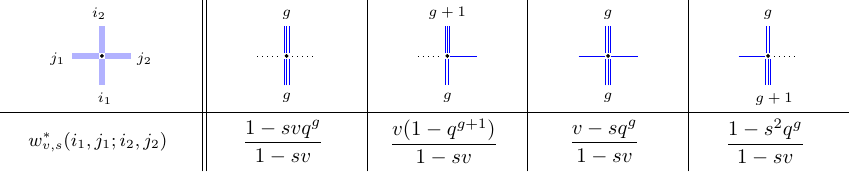}
	\caption{Dual vertex weights $w^*_{v,s}$.
	Here $i_1,i_2\in \mathbb{Z}_{\ge0}$
	and $j_1,j_2\in \left\{ 0,1 \right\}$.}
	\label{fig:w_star_weights}
\end{figure}

\subsection{Yang--Baxter equation}

The weights $w_{u,s}$ and $w^*_{v,s}$ are very special in that they satisfy a Yang--Baxter (RLL type)
equation. To write it down, we need additional cross vertex weights $R_z$ which also depend on 
$q$ (but not on the spin parameter $s$). The weights $R_z$ are given in 
\Cref{fig:R_weights}.

\begin{figure}[htpb]
	\centering
	\includegraphics{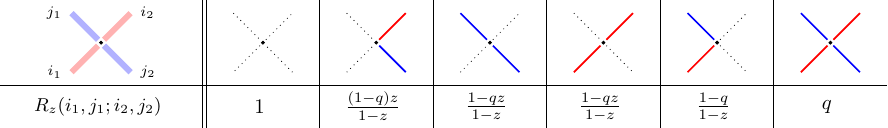}
	\caption{Cross vertex weights $R_z$.
	We have $i_1,j_1,i_2,j_2\in \left\{ 0,1 \right\}$.
	It is convenient to think that the red edges (labeled $i_1,i_2$) are directed to the right, while 
	the blue edges (labeled $j_1,j_2$) are directed to the left.}
	\label{fig:R_weights}
\end{figure}

\begin{proposition}[Yang--Baxter equation]
	\label{prop:YBE_w_wstar}
	For any $i_1,i_2,j_1,j_2\in  \left\{ 0,1 \right\}$ and 
	$i_3,j_3\in \mathbb{Z}_{\ge0}$ we have
	\begin{equation}
		\label{eq:YBE_w_wstar}
		\begin{split}
			&\sum_{k_1,k_2,k_3}R
			_{u v}(i_2, i_1; k_2, k_1)\,
			w^*_{v,s} (i_3, k_1; k_3, j_1)\,
			w_{u,s}(k_3,k_2; j_3,j_2) \\
			&\hspace{100pt}
			= 
			\sum_{k'_1,k'_2,k'_3} w^*_{v,s} (k'_3, i_1; j_3, k'_1)\,
			w_{u,s}(i_3,i_2; k'_3,k'_2)\,
			R_{u v}(k'_2, k'_1; j_2, j_1).
		\end{split}
	\end{equation}
	The sum in left-hand side of 
	\eqref{eq:YBE_w_wstar} goes over $k_1,k_2\in  \left\{ 0,1 \right\}$
	and $k_3\in \mathbb{Z}_{\ge0}$, and similarly for the right-hand side.
	See \Cref{fig:YBE_w_wstar} for an illustration.
\end{proposition}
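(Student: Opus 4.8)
The statement to prove is the Yang--Baxter equation \eqref{eq:YBE_w_wstar} relating the weights $w_{u,s}$, $w^*_{v,s}$, and $R_{uv}$. Since the weights $w^*_{v,s}$ are defined in terms of $w_{v,s}$ via the conjugation \eqref{eq:w_wstar_relation}, and the $w$-weights together with $R$ are expected to satisfy a standard RLL relation inherited from the $U_q(\widehat{\mathfrak{sl}_2})$ structure, the cleanest approach is reduction to a known Yang--Baxter equation. Below is a plan.

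\medskip

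\textbf{Plan.} The plan is to first invoke the already-known Yang--Baxter (RLL) equation for the original weights $w_{u,s}$ and the cross weights $R_z$ --- this is the standard relation established in \cite{Borodin2014vertex}, \cite{BorodinPetrov2016inhom}, whose ingredients are exactly the vertices in \Cref{fig:w_weights} and \Cref{fig:R_weights}. Having that in hand, I would substitute the definition \eqref{eq:w_wstar_relation} of $w^*_{v,s}$ into both sides of \eqref{eq:YBE_w_wstar}, expressing everything in terms of $w_{v,s}$, $w_{u,s}$, and $R_{uv}$. The prefactors of the form $\frac{(s^2;q)_{i_1}(q;q)_{i_2}}{(s^2;q)_{i_2}(q;q)_{i_1}}$ depend only on the vertical (spin) edge labels at the $w^*$ vertex; because of arrow preservation along the vertical line, one checks that on each side of the equation the product of all such Pochhammer prefactors telescopes to the \emph{same} ratio, namely $\frac{(s^2;q)_{i_3}(q;q)_{j_3}}{(s^2;q)_{j_3}(q;q)_{i_3}}$ (the incoming label $i_3$ at the bottom and the outgoing label $j_3$ at the top are common to both sides, while the intermediate label $k_1$ resp.\ $k'_1$ cancels against the swapped arguments of $w_{v,s}$). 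After cancelling this common factor, what remains is precisely the standard RLL equation with the roles of the two horizontal rapidity lines and the orientation of one line flipped. I would then match this rearranged identity, term by term, with the known Yang--Baxter equation for $w_{u,s}$ and $R_z$, possibly after relabeling summation indices $k_i \leftrightarrow k'_i$ and using the crossing/unitarity symmetry of $R_z$ evident from \Cref{fig:R_weights}.

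\medskip

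\textbf{Alternative.} An alternative, more self-contained route is a direct case check: since $i_1,i_2,j_1,j_2,k_1,k_2 \in \{0,1\}$ and the horizontal edges carry at most one arrow, the configurations on each side are severely constrained by arrow preservation $i_1+i_2+i_3 = j_1+j_2+j_3$ (total arrow count), leaving only a handful of genuinely distinct cases parametrized by how many of the two unit arrows on the horizontal lines are present. For each such case one writes out the (finite) sum on each side explicitly using \Cref{fig:w_weights}, \Cref{fig:w_star_weights}, \Cref{fig:R_weights}, and verifies the polynomial identity in $u,v,s,q$ by direct computation. This is routine but tedious; the reduction-to-known-RLL argument above is preferable and is presumably the intended one.

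\medskip

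\textbf{Main obstacle.} The step I expect to be the real content is the bookkeeping of the Pochhammer prefactors: one must verify carefully that, under the constraints imposed by arrow preservation on the vertical edge of the $w^*$-vertex, the product of conjugation factors coming from expanding $w^*$ via \eqref{eq:w_wstar_relation} is identical on the two sides of \eqref{eq:YBE_w_wstar}, so that it can be divided out to leave a clean RLL relation. Once that telescoping is pinned down, the rest is a matter of matching against the established Yang--Baxter equation for the $w$- and $R$-weights. The geometric picture in \Cref{fig:YBE_w_wstar} (the usual ``moving a cross vertex through a pair of vertices'') should make the correspondence of configurations transparent.
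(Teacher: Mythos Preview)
Your ``alternative'' is precisely what the paper does: it simply states that \eqref{eq:YBE_w_wstar} is verified directly by running through the sixteen choices of $i_1,i_2,j_1,j_2\in\{0,1\}$ with arbitrary $i_3,j_3\in\mathbb{Z}_{\ge0}$. So the part of your proposal that you call ``routine but tedious'' is in fact the intended proof, and it is complete.

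Your preferred ``main plan'' is a different route, and as written it has a gap. After substituting \eqref{eq:w_wstar_relation}, the conjugation prefactor on the left-hand side is
\[
\frac{(s^2;q)_{i_3}(q;q)_{k_3}}{(s^2;q)_{k_3}(q;q)_{i_3}},
\]
and on the right-hand side it is
\[
\frac{(s^2;q)_{k'_3}(q;q)_{j_3}}{(s^2;q)_{j_3}(q;q)_{k'_3}}.
\]
These depend on the \emph{summed} vertical labels $k_3$ and $k'_3$, so they do not factor out of the sums, and with only one $w^*$ vertex on each side there is nothing to telescope. (Your remark that ``the intermediate label $k_1$ resp.\ $k'_1$ cancels'' is off: $k_1$ is a horizontal edge label and does not enter the Pochhammer prefactor at all.) The idea is salvageable --- one can view the prefactor as a diagonal gauge $f(i)=(s^2;q)_i/(q;q)_i$ on the vertical edge and absorb the $k_3$-dependence into a gauge-transformed $w_{u,s}$, after which one still needs the right crossing symmetry of $R_z$ to land on the standard RLL relation --- but that is real additional work, not the one-line cancellation you describe. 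Given that the paper's own argument is the direct sixteen-case check, the cleanest fix to your write-up is to promote your ``alternative'' to the proof.
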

\begin{proof}
	The Yang--Baxter equation \eqref{eq:YBE_w_wstar} 
	can be verified directly by considering 
	16 cases for $i_1,i_2,j_1,j_2\in \left\{ 0,1 \right\}$
	and arbitrary $i_3,j_3\in \mathbb{Z}_{\ge0}$.
	See \cite[Appendix B]{BufetovPetrovYB2017} for an explicit 
	listing of similar identities, and
	\cite{Borodin2014vertex} or \cite[(4.8)]{Mangazeev2014}
	for discussions of how the Yang--Baxter equation arises from 
	quantum integrability. 
\end{proof}

\begin{figure}[htpb]
	\centering
	\includegraphics{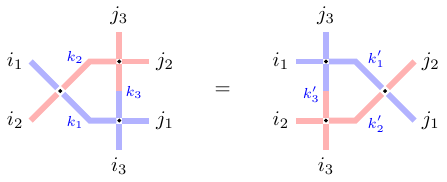}
	\caption{Graphical illustration of the Yang--Baxter equation \eqref{eq:YBE_w_wstar}.
	In the left-hand side, the weight $w_{u,s}$ is at the top, and 
	$w_{v,s}^*$ is at the bottom.
	The weights are switched in the right-hand side.}
	\label{fig:YBE_w_wstar}
\end{figure}

The cross vertex weights $R_{uv}$ in 
\eqref{eq:YBE_w_wstar} do not depend on $s$.
Moreover, they depend on the spectral parameters $u,v$ attached to the 
vertices $w_{u,s}$ and $w_{v,s}^*$ only through their product
$uv$.
Therefore, the same Yang--Baxter equation (with the same $R_{uv}$)
holds for a whole family of weights
$(w_{u\xi,s},w_{v/\xi,s}^*)$, where both $\xi$ and $s$ are arbitrary.
We use this flexibility in the next section to define fully
inhomogeneous spin Hall--Littlewood symmetric rational functions.

\subsection{Spin Hall--Littlewood symmetric functions}

Spin Hall--Littlewood symmetric functions are indexed by
signatures. We refer to \Cref{sub:notation} for notation related to signatures.
Here we need only nonnegative signatures.

Fix $q$, spin parameters $s_x$
and inhomogeneity parameters $\xi_x$, where $x=0,1,2,\ldots $ is the horizontal coordinate.
We define the \emph{spin Hall--Littlewood function}
$\mathsf{F}_{\lambda}(u_1,\ldots,u_N )$,
$\lambda\in \mathrm{Sign}_N$, as the partition 
function
of up-right path ensembles
in $\mathbb{Z}_{\ge0}\times \left\{ 1,\ldots,N  \right\}$,
such that:
\begin{enumerate}[$\bullet$]
	\item A path enters the region at each location 
		$(0,i)$, $i=1,\ldots,N $, on the left boundary;
	\item A path exits the region at locations
		$(\lambda_j,N)$, $j=1,\ldots,N $
		on the top boundary (multiple paths may exit through the same vertical edge);
	\item 
		No paths enter through the bottom boundary or escape to infinity far to the right;
	\item 
		At each vertex $(x,i)$, $x\in \mathbb{Z}_{\ge0}$, $i\in \left\{ 1,\ldots,N  \right\}$, 
		the vertex weight is taken to be $w_{u_i\xi_x,s_x}$.
\end{enumerate}
See \Cref{fig:F_Fstar_partition_functions} (left) for an illustration.
Note that this partition function is well-defined since the 
weight of the empty vertex is $w_{u,s}(0,0;0,0)=1$.
By the very definition, $\mathsf{F}_\lambda(u_1,\ldots,u_N )$ is a 
rational function of
the variables $u_i$ as well as of all parameters.
The functions $\mathsf{F}_\lambda$ appeared
in \cite{Borodin2014vertex}
in the homogeneous case $\xi_x\equiv 1$, $s_x\equiv s$, 
and the inhomogeneous generalization is performed in 
\cite{BorodinPetrov2016inhom}.

The \emph{dual spin Hall--Littlewood functions} are defined
in a similar way using the weights $w^*_{v_i/\xi_x,s_x}$
and ensembles of up-left paths, see \Cref{fig:F_Fstar_partition_functions} (right) for an illustration.
(Note that in \cite{Borodin2014vertex}, \cite{BorodinPetrov2016inhom} 
the functions $\mathsf{F}^*_\lambda$ were denoted by $\mathsf{F}^{\mathsf{c}}_\lambda$.)

\begin{figure}[htpb]
	\centering
	\includegraphics[width=\textwidth]{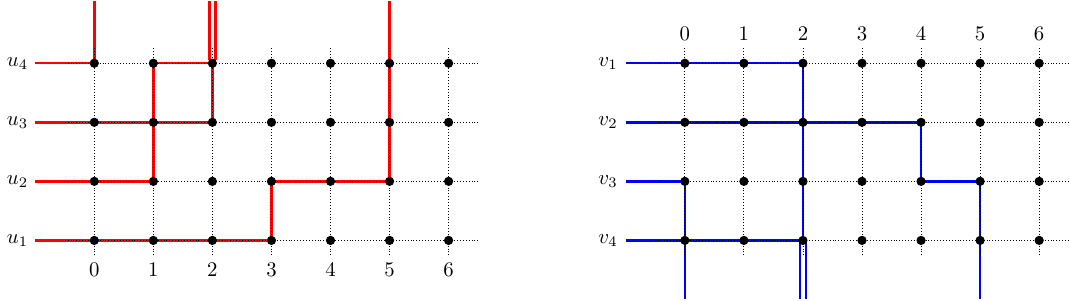}
	\caption{Left: An example of a path configuration
	contributing to the partition function $\mathsf{F}_\lambda(u_1,u_2,u_3,u_4)$, where
	$\lambda=(5,2,2,0)\in \mathrm{Sign}_4$.
	Right: An example of a path configuration for $\mathsf{F}^*_\lambda(v_1,v_2,v_3,v_4)$ 
	with the same $\lambda$.}
	\label{fig:F_Fstar_partition_functions}
\end{figure}

\subsection{Properties of spin Hall--Littlewood functions}

Here we summarize some properties of the spin Hall--Littlewood functions 
$\mathsf{F}_\lambda, \mathsf{F}^*_{\lambda}$.
First, 
for any $\lambda\in \mathrm{Sign}_N$, 
the functions 
$\mathsf{F}_\lambda(u_1,\ldots,u_N )$
and 
$\mathsf{F}^*_\lambda(v_1,\ldots,v_N )$
are \emph{symmetric} in their spectral parameters $u_i$ and $v_j$, respectively. 
This fact 
follows from the Yang--Baxter equation, cf. \cite[Theorem 3.6]{Borodin2014vertex} or
\cite[Proposition 4.5]{BorodinPetrov2016inhom}.

\medskip
Next, we can express the dual functions through the usual ones as follows:
\begin{equation}
	\label{eq:F_Fstar_conjugation}
	\mathsf{F}^*_{\lambda}(v_1,\ldots,v_N )=
	\prod_{r\ge0}\frac{(s_r^2;q)_{m_r}}{(q;q)_{m_r}}\,
	\mathsf{F}_{\lambda}(v_1,\ldots,v_N )\Big\vert_{\text{$\xi_x\to \xi_x^{-1}$ for all $x$}},
\end{equation}
where $\lambda=(0^{m_0}1^{m_1}2^{m_2}\ldots )$ in the multiplicative notation.
This follows from relation \eqref{eq:w_wstar_relation} between $w_{v/\xi,s}$ and $w^*_{v/\xi,s}$
together with telescopic cancellations occurring in weights
of path configurations.

\medskip 
The functions $\mathsf{F}_\lambda$ admit an explicit \emph{symmetrization formula}. 
Let 
\begin{equation}
	\label{eq:phi_notation}
	\varphi_k(u):=\frac{1-q}{1-s_k\xi_k u}\prod_{j=0}^{k-1}
	\frac{\xi_ju-s}{1-s_j\xi_j u},\qquad k\ge0.
\end{equation}
Then \cite[Theorem 4.14.1]{BorodinPetrov2016inhom}
\begin{equation}
	\label{eq:F_symmetrization}
	\mathsf{F}_\lambda(u_1,\ldots,u_N )
	=
	\sum_{\sigma\in S_N}
	\sigma
	\Biggl( 
		\prod_{1\le i<j\le N}\frac{u_i-q u_j}{u_i-u_j}\,
		\prod_{i=1}^{N}\varphi_{\lambda_i}(u_i)
	\Biggr),
\end{equation}
where $S_N$ is the symmetric group, and $\sigma$ acts by permuting the $u_i$'s and not the $\lambda_i$'s.
Formula \eqref{eq:F_symmetrization} is nontrivial and follows from an application of the algebraic Bethe Ansatz.
Consequently, via 
\eqref{eq:F_Fstar_conjugation}
the dual functions $\mathsf{F}^*_\lambda$ also possess a similar symmetrization formula:
\begin{equation}
	\label{eq:F_star_explicit}
		\mathsf{F}^*_\lambda(v_1,\ldots,v_N )
		=
		\prod_{r\ge0}\frac{(s_r^2;q)_{m_r(\lambda)}}{(q;q)_{m_r(\lambda)}}
		\sum_{\sigma\in S_N}
		\sigma
		\Biggl( 
			\prod_{1\le i<j\le N}\frac{v_i-q v_j}{v_i-v_j}\,
			\prod_{i=1}^{N}
			\biggl(
				\frac{1-q}{1-s_{\lambda_i}\xi_{\lambda_i}^{-1}v_i}
				\prod_{j=0}^{\lambda_i-1}\frac{\xi_j^{-1} v_i-s_j}{1-s_j \xi_j^{-1} v_i}
			\biggr)
		\Biggr).
\end{equation}

The functions $\mathsf{F}_\lambda$ satisfy a 
\emph{Cauchy type identity} together with another family of functions
denoted by $\mathsf{G}_{\lambda}^*(v_1,\ldots,v_M )$. 
The $\mathsf{G}_{\lambda}^*$'s are partition functions 
of configurations as in \Cref{fig:F_Fstar_partition_functions} (right) with weights $w^*_{v_i/\xi_x,s_x}$, 
but with different boundary conditions: the exiting paths all exit vertically through
$(0,N)$ instead of horizontally.
We refer to \cite[Section 4.3]{BorodinPetrov2016inhom} for details on the functions $\mathsf{G}^*_\lambda$,
see also \Cref{sub:Olshanski_from_us} below for an explicit symmetrization formula for 
$\mathsf{G}_\lambda^*$.
The Cauchy type identity reads \cite[Corollary 4.13]{BorodinPetrov2016inhom}
\begin{equation}
	\label{eq:F_G_Cauchy}
	\sum_{\lambda\in \mathrm{Sign}_N}
	\mathsf{F}_\lambda(u_1,\ldots,u_N )
	\,
	\mathsf{G}^*_\lambda(v_1,\ldots,v_M )=
	\frac{(q;q)_N}{\prod_{i=1}^{N}(1-s_0\xi_0 u_i)}\,
	\prod_{i=1}^{N}\prod_{j=1}^{M}\frac{1-q u_iv_j}{1-u_iv_j},
\end{equation}
provided that the variables satisfy
\begin{equation}
	\label{eq:admissible_u_v}
	\left|
	\frac{\xi_x u_i-s_x}{1-s_x\xi_x u_i}
	\cdot
	\frac{v_j/\xi_x-s_x}{1-s_x v_j/\xi_x}
	\right|\le 1-\varepsilon<1
	\qquad \textnormal{for all $i,j$ and all $x=0,1,2,\ldots $.}
\end{equation}
Note that when $\xi_x>0$, $-1<s_x\le 0$, and $u_i,v_j>0$, condition \eqref{eq:admissible_u_v}
follows from $u_iv_j\le 1-\varepsilon'<1$ (for a suitable choice of $\varepsilon'>0$).
Identity \eqref{eq:F_G_Cauchy} follows
from the Yang--Baxter equation.

\medskip
The following \emph{torus orthogonality} holds \cite[Corollary 7.5]{BorodinPetrov2016inhom}.
Provided some technical conditions on the parameters (which are given in the cited statement
and are not explicitly required for our purposes), 
for all $\lambda,\mu\in \mathrm{Sign}_N$ we have
\begin{equation}
	\label{eq:torus_orthogonality}
	\frac{1}{N!(2\pi \mathbf{i})^N}
	\oint du_1 \ldots \oint du_N \,
	\frac{\prod_{i\ne j}(u_i-u_j)}{\prod_{i,j=1}^{N}(u_i-q u_j)}\,
	\mathsf{F}_\lambda(u_1,\ldots,u_N )\,
	\mathsf{F}_\mu^*(u_1^{-1},\ldots,u_N^{-1} )
	=\mathbf{1}_{\lambda=\mu},
\end{equation}
where each integration is over the same positively oriented simple closed contour $C$
encircling all $s_x q^{j}/\xi_x$ (where $x\ge0$, $0\le j\le N-1$), and 
such that $C$ encircles the contour $qC$ (the image of $C$ under the multiplication by $q$).

\medskip

We also have an \emph{orthogonality relation of another type}:
\begin{equation}
	\label{eq:spectral_orthogonality}
	\begin{split}
		&
		\sum_{-\infty<\lambda_N\le \lambda_{N-1}\le\ldots\le \lambda_1< +\infty }
		\mathsf{F}_\lambda(u_1,\ldots,u_N )\,
		\mathsf{F}_\lambda^*(v_1^{-1},\ldots,v_N^{-1} )
		\\&\hspace{100pt}=
		(-1)^{\frac{N(N-1)}{2}}
		\frac{\prod_{i,j=1}^{N}(u_i-q v_j)}{\prod_{1\le i<j\le N}
		(u_i-u_j)(v_i-v_j)}\,\det\left[ \delta(v_i-u_j) \right]_{i,j=1}^{N}.
	\end{split}
\end{equation}
Here $\delta(\cdot)$ is the Dirac delta, and identity \eqref{eq:spectral_orthogonality}
should be understood in an integral sense (we refer to 
\cite[Theorem 7.7]{BorodinPetrov2016inhom} for detailed formulation and proof).
Note also that the summation in the left-hand side of \eqref{eq:spectral_orthogonality}
is over signatures $\lambda$ which are not necessarily nonnegative. For signatures with 
possibly negative parts, we trivially extend the definition of $\mathsf{F}_\lambda$ using the
shifting property
\begin{multline*}
	\mathsf{F}_{(\lambda_1+r,\ldots,\lambda_N+r )}(u_1,\ldots,u_N )
	\\=
	\prod_{i=1}^{N}\prod_{j=0}^{r-1}\frac{\xi_j u_i-s_j}{1-s_j\xi_j u_i}
	\left( 
		\mathsf{F}_{(\lambda_1,\ldots,\lambda_N )}(u_1,\ldots,u_N )\Big\vert_{(\xi_x,s_x)\to (\xi_{x+r},s_{x+r})\ \text{for all $x$}}
	\right)
	,
\end{multline*}
where $r\ge1$, 
and similarly for the $\mathsf{F}^*_{\lambda}$'s.

In \cite{BCPS2014_arXiv_v4} and \cite{BorodinPetrov2016inhom}
the torus orthogonality \eqref{eq:torus_orthogonality} is called 
\emph{spatial}, and \eqref{eq:spectral_orthogonality} is
referred to as the \emph{spectral orthogonality}.

\subsection{Stable spin Hall--Littlewood functions}

\label{sub:stable_sHL}

%
%

A useful variant (in fact, a particular case)
of the spin Hall--Littlewood functions was
introduced in
\cite{deGierWheeler2016}, \cite{BorodinWheelerSpinq}.
The
\emph{stable spin Hall--Littlewood functions}
$\widetilde{\mathsf{F}}_\lambda$,
$\widetilde{\mathsf{F}}^*_\lambda$ are
indexed by \emph{partitions} $\lambda=(\lambda_1\ge \lambda_2\ge \ldots\ge \lambda_{\ell(\lambda)}>0 )$
instead of signatures. That is, these functions only care about 
nonzero parts of $\lambda$, and the partition $\lambda$ 
may have an arbitrary length.
The passage to the stable functions is done by inserting infinitely many
incoming and outgoing vertical paths at location~$0$.
In particular, the resulting stable functions may depend on an arbitrary number of variables
$k\ge \ell(\lambda)$,
not tied with the length of the signature (if $k<\ell(\lambda)$, the stable function is zero, 
by agreement).

There are two ways to obtain stable functions indexed
by a partition $\lambda=(\lambda_1\ge \ldots\ge \lambda_{\ell}>0 )$. 
First, we have (cf. \cite[Section 3.3]{BufetovMucciconiPetrov2018})
\begin{equation}
	\label{eq:stable_f_1}
	\widetilde{\mathsf{F}}_\lambda(u_1,\ldots,u_k )
	=
	\prod_{i=1}^{k}(1-s_0\xi_0u_i)\cdot\lim_{m\to+\infty}\mathsf{F}_{\lambda\cup 0^{m+k-\ell}/0^m}(u_1,\ldots,u_k ),
\end{equation}
where in the right-hand side we have signatures with growing numbers of zeroes.
Alternatively, 
\begin{equation}
	\label{eq:stable_f_2}
	\widetilde{\mathsf{F}}_{\lambda}(u_1,\ldots,u_k )=
	\frac{1}{(q;q)_{k-\ell}}\,
	\mathsf{F}_{\lambda\cup 0^{k-\ell}}(u_1,\ldots,u_k )\Big\vert_{s_0=0}.
\end{equation}
Note that the stable functions $\widetilde{\mathsf{F}}_{\lambda}$ do not depend on $s_0$.
While this is clear from \eqref{eq:stable_f_2}, 
in \eqref{eq:stable_f_1} we needed the
prefactor to cancel the corresponding denominators.
We will mostly use the second way \eqref{eq:stable_f_2}
and so refer to 
\cite[Definitions 4.3 and 4.4]{BorodinPetrov2016inhom}
for
the definition of the skew spin Hall--Littlewood
functions 
$\mathsf{F}_{\lambda\cup 0^{m+k-\ell}/0^m}$.

Analogously, the dual stable functions are given by 
\begin{equation}
	\label{eq:dual_stable_f_1}
	\widetilde{\mathsf{F}}_\lambda^*(v_1,\ldots,v_k )
	=
	\prod_{i=1}^{k}(1-s_0v_i/\xi_0)\cdot\lim_{m\to+\infty}\mathsf{F}_{\lambda\cup 0^{m+k-\ell}/0^m}^*(v_1,\ldots,v_k ),
\end{equation}
or equivalently by
\begin{equation}
	\label{eq:dual_stable_f_2}
	\widetilde{\mathsf{F}}_{\lambda}^*(v_1,\ldots,v_k )=
	\mathsf{F}_{\lambda\cup 0^{k-\ell}}^*(v_1,\ldots,v_k )\Big\vert_{s_0=0}.
\end{equation}

The name ``stable'' comes from the fact that
\begin{equation*}
	\widetilde{\mathsf{F}}_\lambda(u_1,\ldots,u_{k-1},0 )
	=
	\widetilde{\mathsf{F}}_\lambda(u_1,\ldots,u_{k-1} )
	,
\end{equation*}
and similarly for the dual functions.

The stable spin Hall--Littlewood functions also satisfy the following Cauchy type identity:
\begin{proposition}[{\cite[Corollary 7.6]{BorodinWheelerSpinq}}]
	\label{prop:stable_Cauchy}
	If the variables satisfy \eqref{eq:admissible_u_v},
	then 
	\begin{equation}
		\label{eq:stable_Cauchy}
		\sum_{\lambda}
		\widetilde{\mathsf{F}}_\lambda(u_1,\ldots,u_n )\,
		\widetilde{\mathsf{F}}_\lambda^*(v_1,\ldots,v_m )=
		\prod_{i=1}^{n}\prod_{j=1}^{m}
		\frac{1-qu_iv_j}{1-u_iv_j},
	\end{equation}
	where the sum runs over all partitions $\lambda=(\lambda_1\ge \ldots\ge \lambda_{\ell(\lambda)}>0 )$
	of arbitrary length.
\end{proposition}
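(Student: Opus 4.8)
The plan is to deduce \eqref{eq:stable_Cauchy} from the non-stable Cauchy identity \eqref{eq:F_G_Cauchy} together with the stabilization formulas \eqref{eq:stable_f_2}, \eqref{eq:dual_stable_f_2}. First I would reduce to the case $n=m=N$: both sides are unchanged when a variable equal to $0$ is appended to either alphabet, by the stability property $\widetilde{\mathsf F}_\lambda(u_1,\dots,u_{k-1},0)=\widetilde{\mathsf F}_\lambda(u_1,\dots,u_{k-1})$ (and its dual) on the left, and because every factor on the right containing the new zero variable equals $1$. Next, since $\widetilde{\mathsf F}_\lambda(u_1,\dots,u_N)=0$ for $\ell(\lambda)>N$, only $\lambda$ with $\ell(\lambda)\le N$ contribute, and $\lambda\mapsto\mu:=\lambda\cup0^{N-\ell(\lambda)}$ is a bijection onto $\mathrm{Sign}_N$ with $m_0(\mu)=N-\ell(\lambda)$. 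Using \eqref{eq:stable_f_2} and \eqref{eq:dual_stable_f_2}, the left-hand side of \eqref{eq:stable_Cauchy} becomes
\[
\Biggl(\sum_{\mu\in\mathrm{Sign}_N}\frac{1}{(q;q)_{m_0(\mu)}}\,\mathsf F_\mu(u_1,\dots,u_N)\,\mathsf F^*_\mu(v_1,\dots,v_N)\Biggr)\Bigg|_{s_0=0},
\]
so it remains to prove that this equals $\prod_{i=1}^N\prod_{j=1}^N\frac{1-qu_iv_j}{1-u_iv_j}$.

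For this finite identity I would use the Yang--Baxter partition-function argument that underlies all the Cauchy identities in the paper, as in \cite{wheeler2015refined}, \cite{BorodinWheelerSpinq}. Interpret $\mathsf F_\mu$ (up-right paths, row weights $w_{u_i\xi_x,s_x}$) and $\mathsf F^*_\mu$ (up-left paths, row weights $w^*_{v_j/\xi_x,s_x}$) as partition functions, and glue the two lattices along the line carrying the common data $\mu$, so that the sum over $\mu$ equals the partition function $Z$ of a single lattice with free middle line and domain-wall-type outer boundary; the combinatorial weight $1/(q;q)_{m_0(\mu)}$, together with the specialization $s_0=0$, is exactly the normalization produced at column $0$ by the $w/w^*$ relation \eqref{eq:w_wstar_relation} where the $m_0(\mu)$ vertical strands meet. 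Then insert a column of cross vertices $R_{u_iv_j}$ on one side and drag it through the whole lattice by repeated use of the Yang--Baxter equation \eqref{eq:YBE_w_wstar}; once it has exited on the other side the boundary conditions force a single surviving configuration, and $Z$ collapses to the product over pairs $(i,j)$ of the weight of the corresponding $2\times2$ block, namely $\frac{1-qu_iv_j}{1-u_iv_j}$. (Run this way the argument actually handles $n\neq m$ directly, bypassing the reduction above.) The hypothesis \eqref{eq:admissible_u_v} ensures absolute convergence of the $\lambda$-sum and legitimizes all rearrangements.

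I expect the main obstacle to be the bookkeeping at column $0$: one must check that opening up the (infinitely many) vertical strands there, specializing $s_0=0$, and carrying the $R$-column past them reproduces precisely the normalizations built into \eqref{eq:stable_f_2}, \eqref{eq:dual_stable_f_2} with no leftover boundary factor — equivalently, that the total weight of the surviving configuration is exactly $\prod_{i,j}\frac{1-qu_iv_j}{1-u_iv_j}$. As a sanity check (and an alternative self-contained route), the case $N=1$ can be done by hand: writing $z_x:=\frac{(\xi_x u-s_x)(v/\xi_x-s_x)}{(1-\xi_x s_x u)(1-s_xv/\xi_x)}$, which satisfies $1-z_x=\frac{(1-uv)(1-s_x^2)}{(1-\xi_x s_x u)(1-s_xv/\xi_x)}$, one gets from \eqref{eq:stable_f_2}, \eqref{eq:dual_stable_f_2}, \eqref{eq:F_symmetrization}, \eqref{eq:F_star_explicit} that $\widetilde{\mathsf F}_\emptyset\,\widetilde{\mathsf F}^*_\emptyset=1$ and, for $k\ge1$,
\[
\widetilde{\mathsf F}_{(k)}(u)\,\widetilde{\mathsf F}^*_{(k)}(v)=\frac{(1-q)uv}{1-uv}\Bigl(\textstyle\prod_{x=1}^{k-1}z_x-\prod_{x=1}^{k}z_x\Bigr),
\]
so the series telescopes to $1+\frac{(1-q)uv}{1-uv}=\frac{1-quv}{1-uv}$. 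For general $n,m$ one can then induct on $N$ via the skew branching rules for $\mathsf F$ and $\mathsf F^*$ (peeling off $u_N$ and $v_N$), the induction step reducing again to this one-variable telescoping; the same column-$0$ difficulty reappears as the task of tracking how the $1/(q;q)_{m_0}$ weights interact with the branching.
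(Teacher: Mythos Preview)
The paper does not prove this proposition; it is quoted from \cite{BorodinWheelerSpinq}. That said, the paper does recover the identity in passing, in the discussion of the $\gamma=0$ special case after \Cref{prop:Z_N_det}: with $\gamma=0$ (i.e., $a=+\infty$ vertical arrows at column~$0$), \Cref{prop:S_as_refined} gives $\mathscr{S}^{\gamma=0}_N$ as the product of prefactors times $\sum_\nu \widetilde{\mathsf{F}}_\nu\,\widetilde{\mathsf{F}}^*_\nu$, \Cref{prop:equality_Z_S} equates this to $\mathscr{Z}^{\gamma=0}_N$, and the latter factorizes because with infinitely many arrows the column-$0$ weights $w_{u_j\xi_0,s_0}$, $w^*_{v_j/\xi_0,s_0}$ become insensitive to the incoming horizontal arrow and $\sum_{i_2,j_2}R_{uv}(i_1,j_1;i_2,j_2)=\frac{1-quv}{1-uv}$ for all $i_1,j_1$.

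Your Yang--Baxter sketch is exactly this $\gamma=0$ argument, and the ``main obstacle'' you flag --- the column-$0$ bookkeeping with infinitely many strands --- is precisely what the $a=+\infty$ limit resolves cleanly: once $a=+\infty$ the column-$0$ vertices contribute uniformly and all the $(q;q)_{m_0}$ factors are absorbed, so there is no leftover boundary term. Your reduction to $n=m$ and the rewriting via \eqref{eq:stable_f_2}--\eqref{eq:dual_stable_f_2} are correct, as is your $N=1$ telescoping computation (the identity $1-z_x=\frac{(1-uv)(1-s_x^2)}{(1-\xi_x s_x u)(1-s_x v/\xi_x)}$ is the key algebraic fact). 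The induction-via-branching alternative would work but is heavier than the one-shot Yang--Baxter argument; the paper's framing through the $\gamma$-parameter has the advantage that the column-$0$ accounting is handled once in \Cref{prop:S_as_refined} rather than redone for each identity.
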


\section{Refined Cauchy identity. Proof of \texorpdfstring{\Cref{thm:intro_sHL_refined}}{the first main result}}
\label{sec:sHL_proof}

In this section we use the Yang--Baxter equation 
and Lagrange interpolation method to establish
our first main result, \Cref{thm:intro_sHL_refined}.

\subsection{Two partition functions}

We begin by defining two partition functions
depending on an integer $N$, spectral parameters
$u_1,\ldots,u_N $, $v_1,\ldots,v_N $, 
other parameters $q,s_x,\xi_x$
of the model, and an additional integer $a\in \left\{ 0,1,2,\ldots  \right\}\cup \left\{ +\infty \right\}$.
For future use we also denote $\gamma:=q^{a}$ (when $a=+\infty$, we have $\gamma=0$),
and treat $\gamma$ as a generic complex parameter. This is possible because,
as we observe throughout the computations,
our quantities of interest,
$\mathscr{Z}^{q^a}_N$ (\Cref{def:Z})
and 
$\mathscr{S}^{q^a}_N$ (\Cref{def:S}),
are rational functions of $q^a$.
Hence these quantities
admit a meromorphic continuation in the variable $\gamma=q^a$.

\begin{definition}[Domain wall type partition function]
	\label{def:Z}
	Denote by 
	\begin{equation*}
		\mathscr{Z}^{\gamma}_{N}=\mathscr{Z}^{\gamma}_{N}(u_1,\ldots,u_N\mid v_1,\ldots,v_N;s_0)
	\end{equation*}
	the partition
	function of the cross vertex configuration as in \Cref{fig:Z_S_partition_functions} (left), 
	where the cross vertex weights are $R_{u_iv_j}$, 
	and the weights on the right
	are $w_{u_i\xi_0,s_0}$ and $w^*_{v_j/\xi_0,s_0}$.
	The boundary conditions are $1$'s on the left, $0$'s on the right, and 
	there are $a$ vertical arrows entering from below 
	and exiting from the top. 

	When $a=0$ (that is, $\gamma=1$), 
	we can ignore the vertical column of vertices on the right, and so
	$\mathscr{Z}^{\gamma=1}_{N}$ essentially becomes the partition function of the inhomogeneous six vertex model with 
	weights $R_{u_iv_j}$ (given in \Cref{fig:R_weights})
	and domain wall boundary conditions.
	For general $a\ne 0$ (that is, for $\gamma\ne 1$) we may call the boundary conditions
	the \emph{decorated domain wall}. 
	The decorated boundary conditions
	depend on three extra parameters, $s_0$, $\xi_0$, and $\gamma=q^a$.
\end{definition}

\begin{definition}[Refined Cauchy partition function]
	\label{def:S}
	Assume that the spectral parameters $u_i,v_j$ satisfy
	\eqref{eq:admissible_u_v}. Denote by
	\begin{equation*}
		\mathscr{S}^{\gamma}_{N}=\mathscr{S}^{\gamma}_{N}(u_1,\ldots,u_N\mid v_1,\ldots,v_N;s_0)
	\end{equation*}
	the partition function of the vertex configuration as in \Cref{fig:Z_S_partition_functions} (right).
	Here the vertex weights in the bottom part are $w_{u_i\xi_x,s_x}$ and 
	the weights in the top part are $w_{v_j/\xi_x,s_x}^*$.
	The boundary conditions are $1$ on the left, $0$ on the right and 
	everywhere at the top and the bottom except the zeroth column.
	In the zeroth column, there are $a$ arrows entering from the bottom
	and exiting from the top.
\end{definition}

Note that the number of configurations contributing to 
$\mathscr{Z}_N^\gamma$
is finite, while this number is infinite for 
$\mathscr{S}_N^\gamma$. Hence the need for the convergence 
condition \eqref{eq:admissible_u_v} in \Cref{def:S}.
Also, we explicitly
indicate the dependence of the partition functions $\mathscr{Z}^{\gamma}_N$ and 
$\mathscr{S}^{\gamma}_N$ on $s_0$ for future convenience,
as in some formulas we would like to change the value
of $s_0$. Since all other parameters $\xi_0$ and $(s_x,\xi_x)$, $x\ge1$,
remain the same, we do not indicate them in the notation.

\begin{figure}[htpb]
	\centering
	\includegraphics[width=.9\textwidth]{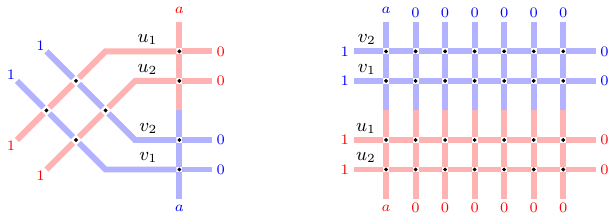}
	\caption{Boundary conditions and spectral
	parameters corresponding to the partition functions
	$\mathscr{Z}^{\gamma}_{N}$ (left) and
	$\mathscr{S}^{\gamma}_{N}$ (right) with $N=2$. The cross vertex weights
	are $R_{u_iv_j}$, and the grid vertex weights are
	$w_{u_i\xi_x,s_x}$ (red) and $w^*_{v_i/\xi_x,s_x}$ (blue).}
	\label{fig:Z_S_partition_functions}
\end{figure}

\begin{remark}
	\label{rmk:S_via_skew}
	Using skew spin Hall--Littlewood functions
	(we refer to 
	\cite[Definitions 4.3 and 4.4]{BorodinPetrov2016inhom}
	for their definition),
	we can write
	\begin{equation*}
		\mathscr{S}^{\gamma}_{N}(u_1,\ldots,u_N\mid v_1,\ldots,v_N;s_0)
		=
		\sum_{\lambda\in \mathrm{Sign}_{N} }
		\mathsf{F}_{\lambda\cup 0^{a}/0^a}(u_1,\ldots,u_N )\,
		\mathsf{F}^*_{\lambda\cup 0^{a}/0^{a}}(v_1,\ldots,v_N ).
	\end{equation*}
\end{remark}

\subsection{Refined Cauchy type sum}

Here we deal with the refined Cauchy type sum $\mathscr{S}^{\gamma}_{N}$,
and rewrite it in terms of the spin Hall--Littlewood functions.
The resulting expression would look similar to the 
left-hand sides of the known Cauchy identities \eqref{eq:F_G_Cauchy}, \eqref{eq:stable_Cauchy},
but with a new \emph{refinement factor}.

\begin{proposition}
	\label{prop:S_as_refined}
	Let the parameters $u_i,v_j$ satisfy \eqref{eq:admissible_u_v}.
	When $\gamma\ne 0$, we have
	\begin{equation}
		\label{eq:S_N_general}
		\begin{split}
			&\mathscr{S}_{N}^{\gamma}(u_1,\ldots,u_N\mid v_1,\ldots,v_N;\gamma^{-1}s_0)
			=
			\prod_{j=1}^{N}
			\frac{1-\xi_0s_0u_j}{1-\xi_0s_0\gamma^{-1}u_j}\,
			\frac{1-s_0 v_j/\xi_0}{1-s_0 \gamma^{-1}v_j/\xi_0}
			\\
			&\hspace{60pt}\times
			\sum_{\lambda\in \mathrm{Sign}_{N}}
			\frac{(\gamma q;q)_{N-\ell(\lambda)}}{(q;q)_{N-\ell(\lambda)}}
			\frac{(\gamma^{-1}s_0^2;q)_{N-\ell(\lambda)}}{(s_0^2;q)_{N-\ell(\lambda)}}
			\,
			\mathsf{F}_{\lambda}(u_1,\ldots,u_N )\,
			\mathsf{F}^*_{\lambda}(v_1,\ldots,v_N ).
	\end{split}
	\end{equation}
	Here in the right-hand side the spin Hall--Littlewood 
	functions $\mathsf{F}_\lambda,\mathsf{F}_\lambda^*$
	contain the original parameter $s_0$.
	Moreover, 
	for $\gamma=0$ we have
	\begin{equation}
		\label{eq:S_N_gamma0}
		\begin{split}
			&\mathscr{S}^{\gamma=0}_N(u_1,\ldots,u_N\mid v_1,\ldots,v_N ;s_0 )
			=
			\prod_{j=1}^{N}\frac{1}{(1-s_0\xi_0u_j)(1-s_0v_j/\xi_0)}
			\\&\hspace{180pt}\times
			\sum_{\nu}
			\widetilde{\mathsf{F}}_{\nu}(u_1,\ldots,u_N )\,
			\widetilde{\mathsf{F}}^*_{\nu}(v_1,\ldots,v_N ),
		\end{split}
	\end{equation}
	where the sum is over partitions $\nu$ of length at most $N$, and 
	$\widetilde{\mathsf{F}}_{\nu},\widetilde{\mathsf{F}}^*_{\nu}$
	are the stable spin Hall--Littlewood functions.
\end{proposition}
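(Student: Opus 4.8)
The plan is to expand the partition function $\mathscr{S}^{\gamma}_N$ along the horizontal line separating its bottom part (built from $w_{u_i\xi_x,s_x}$) from its top part (built from $w^*_{v_j/\xi_x,s_x}$), exactly as one does when deriving the ordinary Cauchy identity \eqref{eq:F_G_Cauchy}. Fix a signature $\lambda\in\mathrm{Sign}_N$ recording the vertical arrow content crossing that dividing line at each horizontal position; the bottom contributes $\mathsf{F}_{\lambda\cup 0^a/0^a}(u_1,\dots,u_N)$ and the top contributes $\mathsf{F}^*_{\lambda\cup 0^a/0^a}(v_1,\dots,v_N)$, which is precisely the skew decomposition already recorded in Remark \ref{rmk:S_via_skew}. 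So the real content is to express the skew functions $\mathsf{F}_{\lambda\cup 0^a/0^a}$ and $\mathsf{F}^*_{\lambda\cup 0^a/0^a}$ in terms of the non-skew $\mathsf{F}_\lambda$, $\mathsf{F}^*_\lambda$ with a parameter shift, and to track the resulting scalar prefactor.

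First I would use the branching/skewing rules for spin Hall--Littlewood functions from \cite{BorodinPetrov2016inhom}: the skew function $\mathsf{F}_{\lambda\cup 0^a/0^a}$ differs from $\mathsf{F}_\lambda$ only through the first $a$ columns being forced to carry $a$ through-arrows with no horizontal traffic, which multiplies the weight by a product of ``straight vertical'' vertex weights $w_{u_i\xi_0,s_0}(a,0;a,0)$ along the bottom and similarly for the dual on top. Because $w_{u,s}(k,0;k,0)=\dfrac{1-s^2q^k u}{1-su}\cdot\text{(stuff)}$ in the notation of Figure \ref{fig:w_weights}, stacking these over the $a$ columns and the $N$ rows produces exactly the $q$-Pochhammer ratios $\dfrac{(\gamma q;q)_{N-\ell(\lambda)}}{(q;q)_{N-\ell(\lambda)}}\dfrac{(\gamma^{-1}s_0^2;q)_{N-\ell(\lambda)}}{(s_0^2;q)_{N-\ell(\lambda)}}$ once one substitutes $\gamma=q^a$ and, crucially, sends the spin parameter on the right-hand block from $s_0$ to $\gamma^{-1}s_0$ as demanded in the statement; the telescoping is of the same flavor as the one behind \eqref{eq:F_Fstar_conjugation}. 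The leftover $j$-indexed product $\prod_{j=1}^N\frac{1-\xi_0 s_0 u_j}{1-\xi_0 s_0\gamma^{-1}u_j}\cdot\frac{1-s_0 v_j/\xi_0}{1-s_0\gamma^{-1}v_j/\xi_0}$ comes from reconciling the normalization of the stable/skew functions (which carry a $(1-s_0\xi_0 u_j)$-type denominator, cf. \eqref{eq:stable_f_1}) against the genuine $\mathsf{F}_\lambda$; this is a bookkeeping step, done by comparing \eqref{eq:stable_f_1}--\eqref{eq:stable_f_2} with the $a$-column straight-vertical weights evaluated at spin $\gamma^{-1}s_0$ versus $s_0$. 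Summing over $\lambda$ then yields \eqref{eq:S_N_general}; the $\gamma=0$ case \eqref{eq:S_N_gamma0} is the $a\to+\infty$ limit, where the $a$-column degenerates into the infinitely-many-vertical-lines construction of the stable functions $\widetilde{\mathsf{F}}_\nu$, $\widetilde{\mathsf{F}}^*_\nu$ (Section \ref{sub:stable_sHL}), and the Pochhammer ratios collapse to $1$ while the $j$-products collapse to $\prod_j (1-s_0\xi_0 u_j)^{-1}(1-s_0 v_j/\xi_0)^{-1}$ via $(s_0^2;q)_\infty$-type factors.

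The main obstacle I anticipate is keeping the spin-parameter substitution $s_0\mapsto\gamma^{-1}s_0$ consistent throughout: the left-hand side $\mathscr{S}^\gamma_N$ is evaluated at spin $\gamma^{-1}s_0$, but the $\mathsf{F}_\lambda$, $\mathsf{F}^*_\lambda$ on the right carry the original $s_0$, so every straight-vertical weight in the $a$-block must be expanded carefully — e.g. $w_{u,\,\gamma^{-1}s_0}(k,0;k,0)$ versus the weights appearing inside $\mathsf{F}_{\lambda\cup0^a}$ — and the identity $\gamma=q^a$ must be used to convert $q^a$-shifts into $\gamma$-shifts so that the final expression is manifestly rational in $\gamma$ and thus valid for generic complex $\gamma\neq0$ (not just $\gamma=q^a$). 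A secondary care point is that $N-\ell(\lambda)=m_0(\lambda)$ is exactly the number of columns among the first $a$ that also receive no arrow from positions $\geq 1$, i.e. where the straight-vertical weight is taken at arrow-count $a$ rather than $a+\text{(something)}$; matching this count to the Pochhammer length is where one must be precise. Once these two points are nailed down, everything else is the routine partition-function-slicing argument.
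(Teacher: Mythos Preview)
Your overall strategy---slice $\mathscr{S}^\gamma_N$ along the middle horizontal line, get skew functions $\mathsf{F}_{\lambda\cup 0^a/0^a}$ and $\mathsf{F}^*_{\lambda\cup 0^a/0^a}$, then compare these to the non-skew $\mathsf{F}_\lambda$, $\mathsf{F}^*_\lambda$ with the spin parameter shifted---is exactly what the paper does, and you have correctly flagged the $s_0\mapsto\gamma^{-1}s_0$ bookkeeping and the rational-in-$\gamma$ continuation as the delicate points. However, your description of the underlying lattice geometry is off in a way that would derail the actual computation.

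There are not ``$a$ columns'' carrying through-arrows; there is a \emph{single} column, the zeroth one, which carries $a$ extra vertical arrows on top of whatever the paths contribute. Moreover, the vertices in that column are \emph{not} of type $(a,0;a,0)$: every one of the $N$ horizontal paths enters the lattice from the left precisely through column~$0$, so each vertex there has $j_1=1$. The two vertex types that actually occur in column~$0$ of the bottom block are $(a{+}i,1;a{+}i,1)$ (the path passes through) and $(a{+}i,1;a{+}i{+}1,0)$ (the path is absorbed). The count $N-\ell(\lambda)=m_0(\lambda)$ is the number of \emph{absorbed} vertices (equivalently, rows whose path never leaves column~$0$), not a count of columns. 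Since each absorption increments the vertical arrow count by one, these absorbed vertices have incoming counts $a,a{+}1,\dots,a{+}N{-}\ell(\lambda){-}1$, and their numerators $1-q^{a+i+1}=1-\gamma q^{i+1}$ give the ratio $(\gamma q;q)_{N-\ell(\lambda)}/(q;q)_{N-\ell(\lambda)}$ when compared against $\mathsf{F}_\lambda$ at spin $s_0$. The dual side uses $w^*_{v_j/\xi_0,s_0/\gamma}(a{+}i{+}1,1;a{+}i,0)$ vertices and produces the $(\gamma^{-1}s_0^2;q)/(s_0^2;q)$ ratio analogously. The $j$-indexed product comes simply from the column-$0$ denominators $1-s_0\xi_0 u_j/\gamma$ versus $1-s_0\xi_0 u_j$ across all $N$ rows. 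Once you redraw the picture with a single decorated column and horizontal inflow, the computation is short and matches the paper's.
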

\begin{proof}
	Throughout the proof we assume that $a\ge N$, so that no negative arrow numbers occur
	in the partition function, and no configurations are forbidden.
	As the resulting identity depends on $\gamma=q^a$ in a rational way
	(due to \eqref{eq:admissible_u_v}
	all infinite sums converge, and are equal to rational functions),
	this assumption
	does not restrict the generality.

	Fix an arbitrary path configuration contributing to the partition function
	$\mathscr{S}^\gamma_N$.
	Let $\lambda\in \mathrm{Sign}_N$ encode the 
	intermediate arrow configuration between the red and the blue parts.
	In the bottom (red) part of \Cref{fig:Z_S_partition_functions} (right),
	the number of arrows in the zeroth column at height $N$ is 
	equal to $a+N-\ell(\lambda)$, where $\ell(\lambda)$ is the number of nonzero parts in $\lambda$.
	Apart from the zeroth column, the weights $w_{u_j\xi_x,s_x}$, $x\ge1$, of all vertices 
	are the same as in the definition of $\mathsf{F}_{\lambda}(u_1,\ldots,u_N )$.
	In the zeroth column, there are two types of vertices, and their weights depend on $\gamma$
	in the following way:
	\begin{equation}
		\label{eq:F_gamma_modified_computation}
		\begin{split}
			w_{u_j\xi_0,s_0/\gamma}(a+i,1;a+i,1)=
			\frac{\xi_0u_j-s_0 q^i}{1-\xi_0s_0u_j/\gamma}
			,\\
			w_{u_j\xi_0,s_0/\gamma}(a+i,1;a+i+1,0)=
			\frac{1-\gamma q^{i+1}}{1-\xi_0s_0u_j/\gamma}
			.
		\end{split}
	\end{equation}
	Let us first look at the numerators in 
	\eqref{eq:F_gamma_modified_computation}.
	The number of vertices of type
	$(a+i,1;a+i+1,0)$
	is equal to $N-\ell(\lambda)$, since only $\ell(\lambda)$ paths leave the zeroth column. 
	These vertices correspond 
	to the number $i$ ranging from $0$ to $N-\ell(\lambda)-1$.
	We see that by taking out the prefactor
	$\frac{(\gamma q;q)_{N-\ell(\lambda)}}{(q;q)_{N-\ell(\lambda)}}$
	from the weight of the whole path configuration, we may remove the $\gamma$-modification from the numerators of the weights of
	the second type of vertices in \eqref{eq:F_gamma_modified_computation}.

	It remains to remove the $\gamma$-modification
	from the denominators in \eqref{eq:F_gamma_modified_computation}, 
	and this is achieved by taking out the factor
	\begin{equation*}
		\prod_{i=1}^{N}\frac{1-\xi_0s_0u_j}{1-\xi_0s_0\gamma^{-1}u_j}.
	\end{equation*}
	The resulting vertex weights are the same 
	as the ones entering 
	$\mathsf{F}_\lambda$. Therefore, for fixed $\lambda$
	the partition function of the 
	bottom (red) part in \Cref{fig:Z_S_partition_functions} (right) is equal to 
	\begin{equation*}
		\frac{(\gamma q;q)_{N-\ell(\lambda)}}{(q;q)_{N-\ell(\lambda)}}
		\prod_{j=1}^{N}\frac{1-\xi_0s_0u_j}{1-\xi_0s_0\gamma^{-1}u_j}\,
		\mathsf{F}_\lambda(u_1,\ldots,u_N ).
	\end{equation*}

	For the top (blue) part in \Cref{fig:Z_S_partition_functions} (right)
	we argue in a similar way by
	relating the partition function of the top part to 
	$\mathsf{F}^*_\lambda(v_1,\ldots,v_N )$.
	In the zeroth column, there are two types of vertices with weights depending on $\gamma$
	as
	\begin{equation}
		\label{eq:FStar_gamma_modified_computation}
		\begin{split}
			w^*_{v_j/\xi_0,s_0/\gamma}
			( a+i,1; a+i,1)
			&=
			\frac{v_j/\xi_0-s_0 q^i}{1-s_0\gamma^{-1}v_j/\xi_0}
			,
			\\
			w^*_{v_j/\xi_0,s_0/\gamma}
			( a+i+1,1; a+i,0)
			&=
			\frac{1-s_0^2 \gamma^{-1} q^{i}}{1-s_0\gamma^{-1}v_j/\xi_0}
			.
		\end{split}
	\end{equation}
	Similarly to the bottom part, we first take out the 
	factor 
	$
	\frac{(s_0^2 \gamma^{-1} ;q)_{N-\ell(\lambda)}}{(s_0^2;q)_{N-\ell(\lambda)}}
	$
	which deals with the numerators in the vertex weights
	of the second type in 
	\eqref{eq:FStar_gamma_modified_computation}.
	Then, to compensate for the 
	denominators, we take out a suitable product over $j=1,\ldots,N $. 
	This implies that for fixed $\lambda$ the partition function of the 
	top (blue) part in \Cref{fig:Z_S_partition_functions} (right)
	is equal to 
	\begin{equation*}
		\frac{(\gamma^{-1}s_0^2;q)_{N-\ell(\lambda)}}{(s_0^2;q)_{N-\ell(\lambda)}}
		\prod_{j=1}^{N}
		\frac{1-s_0 v_j/\xi_0}{1-s_0 \gamma^{-1}v_j/\xi_0}
		\,
		\mathsf{F}^*_\lambda(v_1,\ldots,v_N ).
	\end{equation*}
	Putting all together and summing over $\lambda$, we get the first claim.

	The second claim follows from the definition of the stable spin Hall--Littlewood functions
	(\Cref{sub:stable_sHL})
	via inserting infinitely many arrows in the zeroth column.
	Indeed, setting
	$\gamma=0$ corresponds to $a=+\infty$.
\end{proof}

\subsection{Equality of partition functions}

Next we use the Yang--Baxter equation to relate the partition
functions 
$\mathscr{Z}^\gamma_{N}$
and
$\mathscr{S}^\gamma_{N}$.

\begin{proposition}
	\label{prop:equality_Z_S}
	We have 
	\begin{equation*}
		\mathscr{Z}^\gamma_{N}
		(u_1,\ldots,u_N\mid v_1,\ldots,v_N;s_0 )
		=
		\mathscr{S}^\gamma_{N}
		(u_1,\ldots,u_N\mid v_1,\ldots,v_N;s_0 )
	\end{equation*}
	provided that the spectral parameters $u_i,v_j$
	satisfy \eqref{eq:admissible_u_v}.
\end{proposition}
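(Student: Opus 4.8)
The identity $\mathscr{Z}^\gamma_N=\mathscr{S}^\gamma_N$ is an equality between two vertex‑model partition functions on different lattices, and the natural route is the Yang--Baxter ``train argument'': exhibit a finite sequence of local moves, each an instance of the Yang--Baxter equation \eqref{eq:YBE_w_wstar}, that transforms the lattice underlying $\mathscr{S}^\gamma_N$ into the one underlying $\mathscr{Z}^\gamma_N$ without changing the partition function. This is exactly the scheme used for the non‑refined Cauchy identities \eqref{eq:F_G_Cauchy} and \eqref{eq:stable_Cauchy}; the only genuinely new bookkeeping is the presence of the decoration --- the $a$ through‑arrows in the zeroth column together with the extra parameters $s_0,\xi_0,\gamma$.

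The plan has three ingredients. First, the enabling observation is the one recorded right after \Cref{prop:YBE_w_wstar}: the cross weight $R_z$ depends on its spectral data only through the single product $z$, so for every column index $x$ one has $R_{(u_i\xi_x)(v_j/\xi_x)}=R_{u_iv_j}$, and hence \eqref{eq:YBE_w_wstar} applies verbatim to the pair $\bigl(w_{u_i\xi_x,s_x},\,w^*_{v_j/\xi_x,s_x}\bigr)$ with the $x$‑independent cross weight $R_{u_iv_j}$. Second, I would realize $\mathscr{Z}^\gamma_N$ and $\mathscr{S}^\gamma_N$ as partition functions on lattices that differ only by the position of the triangular array of cross vertices $R_{u_iv_j}$ relative to the $w/w^*$‑grid (the decorated column $x=0$ together with the standard columns $x\ge1$), and interpolate between the two by transporting this array through the grid one vertex at a time: each elementary step is a single application of \eqref{eq:YBE_w_wstar}, which interchanges the vertical order of one ``red'' row and one ``blue'' row and leaves the weight of every configuration --- hence the partition function --- unchanged. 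Third, one checks that at the two ends of this procedure the boundary data of the two definitions match up exactly: the $1$'s, the $0$'s, the $a$ arrows entering the zeroth column from below and leaving it on top, and the $(s_0,\xi_0,\gamma)$‑decoration of \Cref{def:Z} are carried precisely onto the boundary data of \Cref{def:S}. I would present a single figure showing one Yang--Baxter move inside the strip and let the reader propagate it, since what really has to be verified is the labelling of edges, not any computation.

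The one genuine obstacle is that $\mathscr{S}^\gamma_N$ is a partition function of infinitely many configurations --- equivalently, by \Cref{rmk:S_via_skew}, a sum over all $\lambda\in\mathrm{Sign}_N$ --- so the ``train'' must be pushed through unboundedly many columns. I would handle this exactly as in the proof of \Cref{prop:S_as_refined}: first take $a\ge N$, so that in every bounded region only finitely many configurations are relevant and no arrow count becomes negative; carry out the (locally finite) sequence of Yang--Baxter moves and verify that the truncations stabilize, using hypothesis \eqref{eq:admissible_u_v} to guarantee absolute convergence and to control the contribution of configurations reaching far‑out columns; and finally note that both sides are rational functions of $\gamma=q^{a}$, so the identity for $a\ge N$ extends to general $\gamma$. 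I expect the limiting/convergence argument and the matching of boundary conditions at the two ends of the dragging to be the only points requiring real care; the Yang--Baxter step itself is purely mechanical given \Cref{prop:YBE_w_wstar}.
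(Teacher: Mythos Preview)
Your proposal is correct and follows essentially the same route as the paper: drag the triangular array of $R_{u_iv_j}$ crosses through the $w/w^*$ grid using \eqref{eq:YBE_w_wstar}, and use \eqref{eq:admissible_u_v} to kill the contributions where a cross emerges at infinity in a nontrivial state. The paper's version is slightly more concrete on this last point --- it notes that the only alternative end state $(1,1;0,0)$ forces infinitely many factors $w_{u_i\xi_x,s_x}(0,1;0,1)\,w^*_{v_j/\xi_x,s_x}(0,1;0,1)$, each bounded by $1-\varepsilon$ under \eqref{eq:admissible_u_v}, so such configurations contribute zero --- whereas you phrase this as ``truncations stabilize''; and the $a\ge N$ plus rationality‑in‑$\gamma$ step you borrow from \Cref{prop:S_as_refined} is not actually needed here, since both partition functions are already well‑defined for all $a$.
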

\begin{proof}
	Start with the configuration of the lattice corresponding to 
	$\mathscr{Z}^{\gamma}_{N}$.
	Drag all the cross vertices to the right using the Yang--Baxter
	equation (\Cref{prop:YBE_w_wstar}).
	Condition \eqref{eq:admissible_u_v} ensures that 
	after moving the crosses, the end state of the 
	cross vertices is $(0,0;0,0)$.
	Indeed, keeping the other eventual state $(1,1;0,0)$ of the cross vertex
	introduces infinitely many factors of the form
	$w_{u_i\xi_x,s_x}(0,1;0,1)w_{u_j/\xi_x,s_x}(0,1;0,1)$ into the weight of the path configuration.
	Condition \eqref{eq:admissible_u_v} implies that each of these factors is smaller than $1-\varepsilon$
	in the absolute value. Thus, configurations with the 
	eventual state $(1,1;0,0)$ of the cross vertex do not contribute to the partition function.
	Finally, the fact that $R_{u_iv_j}(0,0;0,0)=1$
	means that we arrive at the partition function $\mathscr{S}^{\gamma}_{N}$, 
	and so the equality of partition functions follows.
\end{proof}

\subsection{Evaluation of the Izergin--Korepin type determinant \texorpdfstring{$\mathscr{Z}_{N}^{\gamma}$}{Z}}

The third and final step of the proof of \Cref{thm:intro_sHL_refined}
consists in an explicit computation (in a determinantal form)
of the partition function
of the six vertex model with the
decorated domain wall boundary conditions.
We compute it by the Lagrange interpolation technique
(similarly to, e.g., \cite{wheeler2015refined}).

\subsubsection{Formulation of the result}

Denote
\begin{equation}
	\label{eq:IK_det_element}
	\mathsf{z}(u,v;s_0):=
	\frac{(1-\gamma)(q-\gamma s_0^2)(1-uv)+(1-q)(1-\gamma\xi_0s_0 u)(1-\gamma \xi_0^{-1}s_0v)}
	{(1-uv)(1-quv)}.
\end{equation}

\begin{proposition}
	\label{prop:Z_N_det}
	We have
	\begin{equation}
		\label{eq:Z_N_det}
		\begin{split}
			&\mathscr{Z}^\gamma_N(u_1,\ldots,u_N\mid v_1,\ldots,v_N;s_0 )
			\\&\hspace{30pt}=
			\prod_{j=1}^{N}
			\frac{1}{(1-s_0\xi_0 u_j)(1-\xi_0^{-1}s_0v_j)}\,
			\frac{\prod_{i,j=1}^{N}(1-qu_iv_j)}{\prod_{1\le i<j\le N}(u_i-u_j)(v_i-v_j)}\,
			\det\left[ \mathsf{z}(u_i,v_j;s_0) \right]_{i,j=1}^{N}.
		\end{split}
	\end{equation}
\end{proposition}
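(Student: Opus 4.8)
The strategy is the standard Izergin--Korepin / Lagrange interpolation argument, adapted to the decorated domain wall boundary conditions. I would first establish a list of structural properties of the partition function $\mathscr{Z}^\gamma_N$ that, taken together, pin it down uniquely, and then verify that the claimed right-hand side of \eqref{eq:Z_N_det} satisfies the same list.

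\textbf{Step 1: structural properties of $\mathscr{Z}^\gamma_N$.} Let me write $\widetilde{\mathscr{Z}}^\gamma_N := \mathscr{Z}^\gamma_N \cdot \prod_{i<j}(u_i-u_j)(v_i-v_j) \cdot \prod_{j}(1-s_0\xi_0 u_j)(1-\xi_0^{-1}s_0 v_j) \big/ \prod_{i,j}(1-qu_iv_j)$, the quantity that should equal $\det[\mathsf{z}(u_i,v_j;s_0)]$. The partition function is symmetric separately in the $u_i$'s and in the $v_j$'s (this follows from the Yang--Baxter equation applied to the cross vertices, exactly as for $\mathsf{F}_\lambda$). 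Because $\mathscr{Z}^\gamma_N$ is a finite sum of products of rational weights $R_{u_iv_j}$ (rational in $u_iv_j$) and the two columns of $w$- and $w^*$-weights, one reads off from the explicit weights in Figures \ref{fig:R_weights}, \ref{fig:w_weights}, \ref{fig:w_star_weights} the precise form of the pole structure of $\mathscr{Z}^\gamma_N$ in each variable $u_1$ (say): the only possible poles are at the $v_j^{-1}$ (from the cross vertices and from the $w^*$-column factors $(1-u_1 v_j)$ in the combined weights) and at $(s_0\xi_0)^{-1}$ (from the $w$-column). After clearing these, $\widetilde{\mathscr{Z}}^\gamma_N$ is a polynomial in $u_1$ of controlled degree, and likewise a polynomial in each $v_j$. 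I would then establish the key recursion: setting $u_1 = v_1^{-1}$ forces the cross vertex at $(u_1,v_1)$ into a single frozen configuration, which decouples and relates $\mathscr{Z}^\gamma_N$ to $\mathscr{Z}^\gamma_{N-1}$ in the remaining variables, up to an explicit prefactor. This is the analogue of the Korepin recursion.

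\textbf{Step 2: matching the determinant.} I would then check that the right-hand side of \eqref{eq:Z_N_det} has exactly the same symmetry, the same pole structure (the $(1-u_iv_j)$ in the denominator of $\mathsf{z}(u_i,v_j;s_0)$ produce the needed simple poles; the rest is polynomial), the same degree bounds, the same value at the small-$N$ base case ($N=1$, where $\mathscr{Z}^\gamma_1$ is a short explicit sum over the single column of vertices), and the same reduction when $u_1 = v_1^{-1}$. The determinantal form makes the $u_1\to v_1^{-1}$ reduction transparent: specializing one argument collapses the determinant by a cofactor expansion along a row, reproducing the $(N-1)\times(N-1)$ determinant times the expected prefactor. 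Since the degree/pole data guarantees that a symmetric rational function with these properties is determined by its values at the $N$ interpolation points $u_1 \in \{v_1^{-1},\dots,v_N^{-1}\}$ (Lagrange interpolation, inductively on $N$), the two sides agree.

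\textbf{Main obstacle.} The genuinely nontrivial point — and the one the introduction flags via \Cref{lemma:u0_det_computation} — is not any single one of the above items individually but the verification that the $\gamma$-dependence and the $s_0,\xi_0$-dependence of $\mathscr{Z}^\gamma_N$ at one extra special value of a variable (beyond the $N$ points $v_j^{-1}$) is correctly captured. The decorated boundary adds a whole vertical column of nontrivial $w/w^*$ vertices, so there is an additional pole at $u_1=(s_0\xi_0)^{-1}$ that must be handled; matching the residue or the value there requires evaluating an auxiliary determinant that does not factor for free, and showing it equals the expected product. I expect the bulk of the work to be in computing that auxiliary determinant in closed product form, after which assembling the Lagrange interpolation argument is routine. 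Throughout I would use \Cref{prop:equality_Z_S} only as motivation; the proof of \eqref{eq:Z_N_det} itself is self-contained and combinatorial/algebraic in $\mathscr{Z}^\gamma_N$.
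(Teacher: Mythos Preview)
Your plan is essentially the paper's own argument: establish symmetry, polynomiality with a degree bound, the Korepin-type recursion at $u_1=v_j^{-1}$, the $N=1$ base case, and one further evaluation, then conclude by Lagrange interpolation. Two points deserve correction. First, your normalization $\widetilde{\mathscr{Z}}^\gamma_N$ is not the right one to make things polynomial: dividing by $\prod_{i,j}(1-qu_iv_j)$ reintroduces poles. The paper instead multiplies $\mathscr{Z}^\gamma_N$ by $\prod_j(1-s_0\xi_0 u_j)(1-\xi_0^{-1}s_0 v_j)\prod_{i,j}(1-u_iv_j)$, which clears all denominators and yields a genuine polynomial of degree $\le N$ in each variable. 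Second, and more substantively, your guess for the extra interpolation datum --- a residue or value at the single point $u_1=(s_0\xi_0)^{-1}$ --- is not what works. At that point nothing freezes in the decorated column, so the partition function is not directly computable. The paper's choice is the full simultaneous specialization $u_j=q^{j-1}/(s_0\xi_0\gamma)$, $j=1,\ldots,N$: this forces every vertex in the zeroth column (and hence every cross vertex) into a unique configuration, so the partition-function side becomes an explicit product, while on the determinant side the same specialization is precisely what \Cref{lemma:u0_det_computation} evaluates. You correctly anticipated that this auxiliary determinant is the crux; the only thing to adjust is \emph{which} special point makes both sides tractable.
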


Before proving \Cref{prop:Z_N_det}, 
let us discuss three reductions.

First, when $s_0=0$, this result is established in \cite[Lemma 5]{wheeler2015refined}.

Next, keeping $s_0$ generic and setting
$\gamma=1$ (that is, $a=0$), we have
\begin{equation}
	\label{eq:Z_gamma_1}
	\mathscr{Z}^{\gamma=1}_N(u_1,\ldots,u_N\mid v_1,\ldots,v_N  )
	=
	\frac{(1-q)^N\prod_{i,j=1}^{N}(1-qu_iv_j)}{\prod_{1\le i<j\le N}(u_i-u_j)(v_i-v_j)}\,
	\det\left[ \frac{1}{(1-u_iv_j)(1-qu_iv_j)} \right]_{i,j=1}^{N}.
\end{equation}
The determinant in the right-hand side is the celebrated Izergin--Korepin determinant
\cite{Izergin1987},
\cite[Ch. VII.10]{QISM_book}.

Third, keep $s_0$ generic, and consider now the case $\gamma=0$.
We have
\begin{equation*}
	\mathsf{z}(u,v;s_0)\big\vert_{\gamma=0}=
	\frac{q(1-uv)+(1-q)}
	{(1-uv)(1-quv)}=\frac{1}{1-uv}.
\end{equation*}
Thus, for $\gamma=0$ the
partition function $\mathscr{Z}_N^{\gamma}(u_1,\ldots,u_N\mid v_1,\ldots,v_N; s_0  )$ 
simplifies using the Cauchy determinant 
(e.g.,
\cite[vol. III, p. 311]{Muir_det}) to 
\begin{equation}
	\mathscr{Z}^\gamma_N(u_1,\ldots,u_N\mid v_1,\ldots,v_N;s_0 )
	\Big\vert_{\gamma=0}=\label{eq:Z_N_gamma0_special}
	\prod_{j=1}^{N}\frac{1}{(1-s_0\xi_0u_j)(1-s_0v_j/\xi_0)}
	\prod_{i,j=1}^{N}\frac{1-qu_iv_j}{1-u_iv_j}.
\end{equation}
Formula \eqref{eq:Z_N_gamma0_special} 
agrees with the known 
Cauchy identity for the stable spin Hall--Littlewood functions
recalled in \Cref{prop:stable_Cauchy}.
To see this, use
\eqref{eq:S_N_gamma0} and \Cref{prop:equality_Z_S}.

On the other hand, formula \eqref{eq:Z_N_gamma0_special} for the 
six vertex model partition function may be obtained independently by noting that 
for $a=+\infty$, the weights
$w_{u_j\xi_0,s_0}$
and 
$w^*_{v_j/\xi_0,s_0}$
do not depend on the number of incoming horizontal arrows from the left, 
and
\begin{equation*}
	\sum_{i_2,j_2}R_{uv}(i_1,j_1;i_2,j_2)=\frac{1-quv}{1-uv}
\end{equation*}
for all $i_1,j_1$.
Therefore, the decorated domain wall partition function
factorizes.
This independent derivation of \eqref{eq:Z_N_gamma0_special}
establishes \Cref{prop:Z_N_det} for $\gamma=0$.

\medskip

The proof of \Cref{prop:Z_N_det} in the case of general $\gamma$ occupies the rest of this subsection.
We argue using Lagrange interpolation technique 
(similarly to, e.g., \cite{wheeler2015refined}) which consists of three steps.
First, directly from \Cref{def:Z} we formulate five properties
which the partition function $\mathscr{Z}^\gamma_N$ satisfies.
Then we show that these properties determine a function uniquely.
Finally, we check that the determinantal formula in the 
right-hand side of \eqref{eq:Z_N_det} satisfies the same five properties,
which establishes the desired equality.

\subsubsection{Step 1. Properties of the partition function}
\label{subsub:step1_IK}

Denote
\begin{multline*}
	\widetilde{\mathscr{Z}}^\gamma_N(u_1,\ldots,u_N\mid v_1,\ldots,v_N;s_0  )
	\\:=
	\prod_{j=1}^{N}
	(1-s_0\xi_0 u_j)(1-\xi_0^{-1}s_0v_j)
	\prod_{i,j=1}^{N}(1-u_iv_j)\,
	\mathscr{Z}^\gamma_N(u_1,\ldots,u_N\mid v_1,\ldots,v_N;s_0).
\end{multline*}
Additional factors in $\widetilde{\mathscr{Z}}^\gamma_N$
clear out all the denominators in
the vertex weights $w_{u_i \xi_0,s_0}$, $w^*_{v_j/\xi_0,s_0}$, 
and $R_{u_iv_j}$, $i,j=1,\ldots,N $. 
This means that $\widetilde{\mathscr{Z}}^\gamma_N$ now depends on the spectral parameters
$u_i,v_j$ in a polynomial manner.
As the first step in the proof of \Cref{prop:Z_N_det},
let us list a number of properties of this renormalized partition function
$\widetilde{\mathscr{Z}}^\gamma_N$.

\begin{properties}
	\label{Z_properties}
	\begin{enumerate}[\bf1.\/]
		\item The function 
			$\widetilde{\mathscr{Z}}^\gamma_N(u_1,\ldots,u_N\mid v_1,\ldots,v_N;s_0  )$
			is symmetric separately in each of the two sets of variables 
			$\left\{ u_1,\ldots,u_N  \right\}$ and 
			$\left\{ v_1,\ldots,v_N  \right\}$.
		\item 
			As a function of each single
			variable $u_i$ or $v_j$, 
			$\widetilde{\mathscr{Z}}^\gamma_N(u_1,\ldots,u_N\mid v_1,\ldots,v_N;s_0  )$
			is a polynomial of degree at most $N$.
		\item 
			Setting $u_1=v_1^{-1}$, we have the recurrence
			\begin{equation}
				\label{eq:IK_property_3}
				\begin{split}
					&\widetilde{\mathscr{Z}}^\gamma_N(u_1,u_2,\ldots,u_N\mid v_1,v_2,\ldots,v_N;s_0  )
					\Big\vert_{u_1=v_1^{-1}}
					\\&\hspace{50pt}=
					(1-q)
					(1-s_0\xi_0 \gamma v_1^{-1})
					(1-s_0\xi_0^{-1}\gamma v_1)
					\prod_{j=2}^{N}
					\left( 1-qv_1^{-1}v_j \right)\left( 1-qv_1u_j \right)
					\\&\hspace{100pt}\times
					\widetilde{\mathscr{Z}}^\gamma_{N-1}(u_2,\ldots,u_N\mid v_2,\ldots,v_N;s_0  ).
				\end{split}
			\end{equation}
		\item Under the specialization $u_j=q^{j-1} / (s_0\xi_0 \gamma)$, we have
			\begin{equation}
				\label{eq:IK_property_4}
				\begin{split}
					&\widetilde{\mathscr{Z}}^\gamma_N\left(  
						(s_0\xi_0 \gamma)^{-1},q(s_0\xi_0\gamma)^{-1},\ldots,q^{N-1}(s_0\xi_0\gamma)^{-1}
						\mid v_1,\ldots,v_N;s_0 
					\right)
					\\&\hspace{80pt}=
					q^{N^2}
					\prod_{i,j=1}^{N}\left( 1-v_i\frac{q^{j-1}}{s_0\xi_0\gamma} \right)
					\prod_{j=1}^{N}(1-\gamma q^{-j+1})(1-s_0^2 \gamma q^{-j}).
				\end{split}
			\end{equation}
		\item For $N=1$, we have
			\begin{equation}
				\label{eq:IK_property_5}
				\widetilde{\mathscr{Z}}^\gamma_1(u\mid v;s_0  )
				=(1-\gamma)(q-\gamma s_0^2)(1-uv)+(1-q)(1-\gamma\xi_0s_0 u)(1-\gamma \xi_0^{-1}s_0v).
			\end{equation}
	\end{enumerate}
\end{properties}

Properties \textbf{3}, \textbf{4}, and \textbf{5} follow from the definition
of the renormalized partition function 
$\widetilde{\mathscr{Z}}^\gamma_N$. Namely, for \textbf{3}, 
setting $u_1=v_1^{-1}$ makes the renormalized weight $(1-u_1v_1)R_{u_1v_1}(1,1;1,1)$
disappear, and so the nontrivial behavior of the paths 
reduces to a square of size $N-1$.
The factors in the right-hand side of \eqref{eq:IK_property_3} 
come from the ``frozen'' vertices.
For \textbf{4}, specializing the $u_j$ variables 
forces the configurations in the zeroth column to 
be of the form $(a-j,1;a-j+1,0)$, where $j=1,\ldots,N $ (here we assume $a\ge N$ which
does not restrict the generality since the desired identity is between rational 
functions in $\gamma=q^a$).
This requires all vertices inside the square to be of the type
$(1,1;1,1)$, and thus we get the right-hand side of \eqref{eq:IK_property_4}.
For \textbf{5}, identity \eqref{eq:IK_property_5} is straightforward.

Let us now prove properties \textbf{1} and \textbf{2}.
\begin{proof}[Proof of symmetry]
	Symmetry follows from a number of Yang--Baxter equations
	of a type different from the one in \Cref{prop:YBE_w_wstar}.
	Introduce vertex weights $r_z(i_1,j_1;i_2,j_2)$, $i_1,i_2,j_1,j_2\in \left\{ 0,1 \right\}$,
	given in \Cref{fig:r_table}.

	\begin{figure}[htpb]
		\centering
		\includegraphics{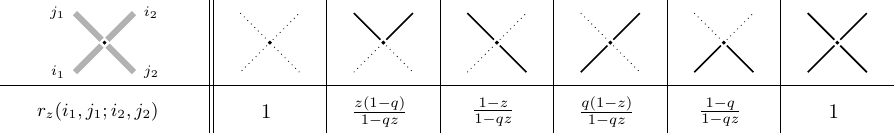}
		\caption{The vertex weights $r_z$ employed in the proof of symmetry of 
		$\widetilde{\mathscr{Z}}^\gamma_N$.}
		\label{fig:r_table}
	\end{figure}
	\begin{figure}[htpb]
		\centering
		\includegraphics[width=\textwidth]{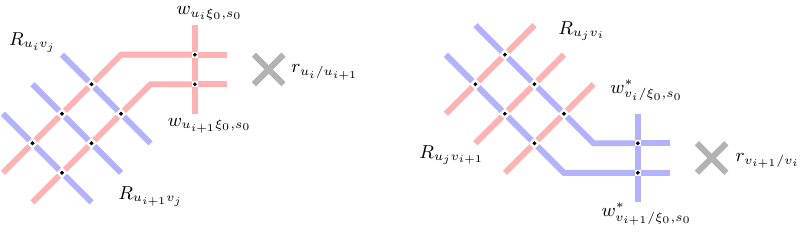}
		\caption{Interchanging spectral parameters
		$u_i\leftrightarrow u_{i+1}$ (left) or
		$v_i\leftrightarrow v_{i+1}$ (right)
		by means of Yang--Baxter equations involving
		weights $r_z$.}
		\label{fig:YBE_for_IK}
	\end{figure}
	
	The fact that we can interchange the spectral parameters $u_i,u_{i+1}$ follows from
	two
	Yang--Baxter equations. One is satisfied by the weights
	$r_{u_i/u_{i+1}}, w_{u_i\xi_0,s_0}, w_{u_{i+1}\xi_0,s_0}$,
	and the other one by 
	$r_{u_i/u_{i+1}}, R_{u_iv_j}, R_{u_{i+1}v_j}$.
	These equations have a form similar to \eqref{eq:YBE_w_wstar} and are illustrated
	in \Cref{fig:YBE_for_IK} (left). This allows to take a cross vertex of weight $r_{u_i/u_{i+1}}$
	and drag it throughout the lattice, interchanging the parameters $u_i,u_{i+1}$ everywhere.
	The fact that we can interchange $v_i,v_{i+1}$ follows similarly,
	see \Cref{fig:YBE_for_IK} (right).
	All Yang--Baxter equations involved are verified in a straightforward
	way. 
\end{proof}

\begin{proof}[Proof of polynomiality of degree $\le N$]
	Each variable $u_i$ and $v_j$ enters into $N+1$ vertices --- $N$
	cross vertices in the square, and one vertex in the zeroth column (cf. \Cref{fig:Z_S_partition_functions}, left).
	After renormalization, all vertex weights become linear in their respective spectral parameters. Thus, the
	degree of $\widetilde{\mathscr{Z}}^\gamma_N$ in each variable $u_i$ or $v_j$ is at most $N+1$.
	It remains to show that the coefficient by the $(N+1)$-st power is zero.

	Due to symmetry, it suffices to consider only $u_N$ (the case of $v_N$ is analogous).
	In order to get a nonzero coefficient by $u_N^{N+1}$, 
	the configuration of paths inside
	the $N\times N$ square
	should avoid weights
	$R_{u_Nv_j}(1,1;0,0)$ because their renormalized weight equals $1-q$.
	This implies that in the zeroth column, 
	the vertex containing the spectral parameter $u_N$
	would be of type $(g,1;g+1,0)$. 
	The renormalized weight of the latter vertex is $1-q^{g+1}$ which does not depend on $u_N$.
	We see that it is impossible to get a nonzero coefficient by $u_N^{N+1}$, so the 
	total degree is at most $N$.
\end{proof}

\subsubsection{Step 2. Uniqueness}
\label{subsub:step2_IK}

The fact that \Cref{Z_properties}
determine $\widetilde{\mathscr{Z}}^\gamma_N$
uniquely follows from Lagrange interpolation. 
For the reader's convenience, let us reproduce the necessary statement which
closely follows \cite[Appendix B]{wheeler2015refined}.

\begin{lemma}
	\label{lemma:Lagrange_IK_uniqueness}
	Let $f_N(u_1,\dots,u_N)$, $N \geq 1$, be symmetric polynomials
	in $(u_1,\dots,u_N)$ of degree at most $N$ in each $u_j$.
	Suppose that they satisfy recurrence
	\begin{align*}
		f_N(u_1,\dots,u_{N-1},t_i) 
		&= 
		C^{(i)}_N
		f_{N-1}(u_1,\dots,u_{N-1}),
		\qquad
		1 \leq i \leq N,
	\end{align*}
	for a suitable set of distinct points $\left\{ t_1,\dots,t_{N} \right\}$,
	where $C^{(i)}_N$ are some coefficients.
	Let also $f_N(u_1^0,\ldots,u_N^0)=C^{(0)}_N$ for some point
	$(u_1^0,\ldots,u_N^0 )$ and some constant $C^{(0)}_N$.
	
	If another family of symmetric polynomials $g_N(u_1,\dots,u_N)$, $N \geq 1$
	satisfies all of these conditions and $f_1(u) = g_1(u)$, then 
	\begin{align*}
		f_N(u_1,\dots,u_N) = g_N(u_1,\dots,u_N), 
		\quad
		\text{for all}\ N \geq 1.
	\end{align*}
\end{lemma}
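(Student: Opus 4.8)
The plan is to argue by induction on $N$. The base case $N=1$ is immediate, since $f_1=g_1$ is assumed. For the inductive step, suppose $f_{N-1}=g_{N-1}$ and put $h_N:=f_N-g_N$, a symmetric polynomial of degree at most $N$ in each variable. Because $f_N$ and $g_N$ satisfy the \emph{same} recurrence with the \emph{same} coefficients $C^{(i)}_N$, specializing the last variable to $t_i$ and using the inductive hypothesis gives
$h_N(u_1,\dots,u_{N-1},t_i)=C^{(i)}_N\bigl(f_{N-1}(u_1,\dots,u_{N-1})-g_{N-1}(u_1,\dots,u_{N-1})\bigr)=0$
for each $i=1,\dots,N$. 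Since $h_N$ is symmetric in all of $u_1,\dots,u_N$, it therefore vanishes identically whenever any single variable $u_j$ is set equal to any of the distinct points $t_1,\dots,t_N$.

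Next I would turn these vanishings into divisibility. Fixing all variables except $u_j$, the polynomial $h_N$ has degree at most $N$ in $u_j$ and vanishes at the $N$ distinct points $t_1,\dots,t_N$; dividing by the monic polynomial $\prod_{i=1}^N(u_j-t_i)$ and noting that the remainder has degree $<N$ in $u_j$ yet still vanishes at all $t_i$, one concludes that $\prod_{i=1}^N(u_j-t_i)$ divides $h_N$ in the polynomial ring. The $N$ polynomials $\prod_{i=1}^N(u_j-t_i)$, $j=1,\dots,N$, involve disjoint sets of variables and so are pairwise coprime, hence their product $\prod_{j=1}^N\prod_{i=1}^N(u_j-t_i)$ divides $h_N$. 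But that product already has degree exactly $N$ in each variable, whereas $h_N$ has degree at most $N$ in each variable, so the quotient is a constant: $h_N=e\,\prod_{j=1}^N\prod_{i=1}^N(u_j-t_i)$ for some scalar $e$.

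Finally I would kill $e$ using the single extra evaluation. Plugging $(u_1^0,\dots,u_N^0)$ into $h_N$ and using $f_N(u_1^0,\dots,u_N^0)=C^{(0)}_N=g_N(u_1^0,\dots,u_N^0)$ yields $0=e\,\prod_{j=1}^N\prod_{i=1}^N(u_j^0-t_i)$, so $e=0$ and $h_N\equiv0$, completing the induction. The argument is essentially formal, and the only points needing care are: (i) in the inductive step, upgrading the vanishing from $u_N=t_i$ to all $u_j=t_i$, which genuinely relies on symmetry of $h_N$ together with $f_N$ and $g_N$ sharing the same recurrence constants; (ii) the degree bookkeeping, where the bound ``degree $\le N$ in each variable'' must exactly match the degree of the divisor; and (iii) an implicit but necessary genericity hypothesis that the interpolation point avoids the $t_i$ (otherwise the last evaluation is vacuous and the statement can fail). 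In the application this last condition holds, since there $t_i=v_i^{-1}$ while $u_j^0=q^{j-1}(s_0\xi_0\gamma)^{-1}$, which are distinct for generic parameters.
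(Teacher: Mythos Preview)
Your proof is correct and follows essentially the same inductive strategy as the paper: use the recurrence plus the inductive hypothesis to get vanishing at the $t_i$, convert this via the degree bound into $h_N$ being a constant multiple of $\prod_{i,j}(u_j-t_i)$, and then kill the constant with the extra evaluation. Your execution is in fact more careful than the paper's sketch---in particular you make the coprimality/divisibility step explicit and flag the implicit genericity hypothesis $\prod_{i,j}(u_j^0-t_i)\ne 0$, which the paper leaves unstated.
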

\begin{proof}
	By induction, since $f_1=g_1$, we
	assume $f_{N-1}(u_1,\ldots,u_{N-1} )=g_{N-1}(u_1,\ldots,u_{N-1} )$.
	Then using the recurrence we see that 
	$f_N(u_1,\ldots,u_{N-1},t_j)=g_N(u_1,\ldots,u_{N-1},t_j)$
	for $N$ distinct points. 
	This, together with the fact that the degree of $f_N$ in $u_N$ is at most $N$,
	implies that $f_N$ and $g_N$ coincide up to some constant factor (independent of $u_N$).
	Moreover, this constant factor cannot depend on $u_1,\ldots,u_{N-1} $ either due to the symmetry
	of the polynomials.
	Finally, since $f_N$ and $g_N$ coincide at a fixed point $(u_1^0,\ldots,u_N^0 )$, 
	we get the claim.
\end{proof}

\subsubsection{Step 3. Verification of properties for the determinantal formula}
\label{subsub:step3_IK}

It remains to check that suitably normalized determinant in the 
right-hand side of \eqref{eq:Z_N_det}
satisfies \Cref{Z_properties}.
This renormalization has the form
\begin{equation}
	\label{eq:Z_N_det_RHS}
	\frac{\prod_{i,j=1}^{N}(1-u_iv_j)(1-qu_iv_j)}{\prod_{1\le i<j\le N}(u_i-u_j)(v_i-v_j)}\,
	\det\left[ \mathsf{z}(u_i,v_j;s_0) \right]_{i,j=1}^{N},
\end{equation}
where $\mathsf{z}(u,v;s_0)$ is given by \eqref{eq:IK_det_element}.
Symmetry (property \textbf{1}) is straightforward: permuting 
any pair $u_i,u_j$ changes the sign of the determinant as well as the 
Vandermonde $\prod_{i<j}(u_i-u_j)$ in the denominator.
The fact that \eqref{eq:Z_N_det_RHS} is a polynomial of degree $N$ 
(property \textbf{2})
follows because we can rewrite
\begin{equation*}
	\eqref{eq:Z_N_det_RHS}=
	\frac{1}{\prod_{1\le i<j\le N}(u_i-u_j)(v_i-v_j)}\,
	\det
	\Bigl[ 
		\mathsf{z}(u_i,v_j;s_0)\prod_{l=1}^{N}(1-u_iv_l)(1-qu_iv_l) 
	\Bigr]_{i,j=1}^{N},
\end{equation*}
and each element of the determinant is a polynomial in $u_i$ of degree $2N-1$.
Dividing by the Vandermonde 
which has degree $N-1$ in each $u_i$ yields degree $N$.

Property \textbf{5} is straightforward.

For \textbf{3}, we multiply the first row of the determinant
by $1-u_1v_1$,
and note that setting $u_1=1/v_1$ eliminates
all elements in the first row except 
\begin{equation*}
	(1-u_1v_1)\,\mathsf{z}(u_1,v_1;s_0)\big\vert_{u_1=v_1^{-1}}=
	(1-\gamma s_0v_1/\xi_0)(1-\gamma s_0\xi_0/v_1).
\end{equation*}
This means that the 
determinant of size $N$ reduces to a similar determinant of size $N-1$
which is equal to $\widetilde{\mathscr{Z}}_{N-1}^{\gamma}$.

Finally, let us get property \textbf{4}. Denote $u_0=(s_0\xi_0\gamma)^{-1}$.
\begin{lemma}
	\label{lemma:u0_det_computation}
	The determinantal expression in \eqref{eq:Z_N_det_RHS},
	specialized at $u_i=u_0 q^{i-1}$, $i=1,\ldots,N $, 
	equals the right-hand side of \eqref{eq:IK_property_4}.
\end{lemma}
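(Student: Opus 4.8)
The plan is to substitute $u_i = u_0 q^{i-1}$ with $u_0 = (s_0\xi_0\gamma)^{-1}$ directly into the normalized determinantal expression \eqref{eq:Z_N_det_RHS} and show it collapses to a product. First I would examine the matrix entry $\mathsf{z}(u_0 q^{i-1}, v_j; s_0)$. Observe that $1 - \gamma\xi_0 s_0 u_i = 1 - q^{i-1}$, which vanishes for $i=1$; more importantly, the numerator of $\mathsf{z}$ in \eqref{eq:IK_det_element} has the factor $(1-q)(1 - \gamma\xi_0 s_0 u_i)(1 - \gamma\xi_0^{-1}s_0 v_j)$, and at $u_i = u_0 q^{i-1}$ this becomes $(1-q)(1-q^{i-1})(1 - \gamma\xi_0^{-1}s_0 v_j)$. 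So for $i = 1$ the entry simplifies dramatically to just $\dfrac{(1-\gamma)(q - \gamma s_0^2)}{1-q u_0 v_j}$ (the $(1-u_0 v_j)$ in the denominator cancels against the $(1-u_i v_j)$ in the numerator's first term). This suggests that after clearing denominators row by row, the structure becomes triangular or at least amenable to a recursive extraction.

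The key step is to recognize that multiplying row $i$ of the determinant $\det[\mathsf{z}(u_i,v_j;s_0)\prod_l (1-u_i v_l)(1-q u_i v_l)]$ exposes polynomial entries, and that with $u_i = u_0 q^{i-1}$ these polynomials in the $v_j$ share many common factors. Concretely, I expect that the entry in row $i$ acquires a factor $\prod_{l \ne \text{something}}(1 - q^{?} v_l / (\text{const}))$ tied to the geometric progression, so that a large product pulls out of the determinant, leaving a Vandermonde-like reduced determinant that exactly cancels $\prod_{i<j}(u_i - u_j) = \prod_{i<j} u_0 q^{i-1}(1 - q^{j-i})$ in the denominator. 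The cleanest route is induction on $N$: use property \textbf{3} of the determinantal side (already verified: setting $u_1 = v_1^{-1}$ reduces size $N$ to size $N-1$) is not directly applicable here, so instead I would use the column operations — subtract suitable multiples of columns to create zeros — exploiting that the $u_i$ form a $q$-geometric progression, which is precisely the situation where $q$-Vandermonde/confluent determinant evaluations apply.

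Alternatively, and perhaps more robustly, I would set up an independent induction: expand the specialized determinant along its first row (where, as noted, the entries are the simplest), obtaining a signed sum of $(N-1)\times(N-1)$ minors each of which is a determinant of the same shape but with one $v_j$ removed and $u_i$ running over $u_0 q^1, \ldots, u_0 q^{N-1}$. One checks that each such minor, by the inductive hypothesis (after a shift $u_0 \mapsto u_0 q$, equivalently $\gamma \mapsto \gamma q$ since $u_0 q = (s_0\xi_0 (\gamma q))^{-1}$), equals an explicit product; the resulting sum over $j$ telescopes or matches a known $q$-binomial identity, producing the right-hand side of \eqref{eq:IK_property_4}. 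The overall power $q^{N^2}$ and the $\prod_j (1-\gamma q^{-j+1})(1 - s_0^2\gamma q^{-j})$ factors will emerge from tracking the prefactors $\dfrac{(1-\gamma)(q-\gamma s_0^2)}{1 - q u_0 v_j}$-type contributions across the recursion.

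The main obstacle I anticipate is the bookkeeping in matching the intermediate $q$-series that appears — the sum over first-row cofactors — against the clean product form; this is exactly the kind of "nontrivial auxiliary determinant evaluating in product form" the authors flag, and it likely requires identifying the entries, after a rank-one correction, with the three-term recurrence matrix of a family of $q$-orthogonal polynomials (as the paper hints elsewhere with $q$-Krawtchouk). Verifying property \textbf{2} (degree bound) is already done for the determinantal side, so the content of the lemma is purely the value of the specialization, and once the product structure is correctly guessed, confirming it reduces to a finite induction plus one hypergeometric summation — routine but delicate.
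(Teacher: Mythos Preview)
Your proposal has a genuine gap in the inductive step. You plan to expand along the first row (where the entry simplifies to $(1-\gamma)(q-\gamma s_0^2)/(1-q u_0 v_j)$ because $1-\gamma\xi_0 s_0 u_0=0$) and then apply the inductive hypothesis to each $(N-1)\times(N-1)$ minor ``after a shift $u_0\mapsto u_0 q$, equivalently $\gamma\mapsto\gamma q$''. Two problems: first, the direction is wrong, since $u_0 q=(s_0\xi_0\cdot\gamma/q)^{-1}$ corresponds to $\gamma'=\gamma/q$; second, and fatally, the entries of those minors are still $\mathsf{z}(u,v;s_0)$ with the \emph{original} parameter $\gamma$ hardcoded in its numerator \eqref{eq:IK_det_element}, whereas the inductive hypothesis (Lemma statement at size $N-1$) only evaluates $\det[\mathsf{z}_{\gamma'}(u_0' q^{i-1},v_k)]$ with $u_0'=(s_0\xi_0\gamma')^{-1}$. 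You cannot simultaneously shift the specialization point and keep the old $\gamma$ inside $\mathsf{z}$, so the recursion does not close. Expanding along the last row instead would give minors with $u$-values $u_0,\ldots,u_0 q^{N-2}$ that do match the inductive hypothesis, but then the last-row entries do not simplify and you are left with a genuinely two-term numerator in each cofactor; the resulting partial-fraction sum is no longer a single-residue computation.

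Your remark about $q$-Krawtchouk is misplaced: that connection appears in the paper's proof of \Cref{lemma:M_det_4}, which concerns the \emph{other} determinant $\mathscr{M}_N^\gamma$, not this one. For the present lemma the paper proceeds quite differently: it sets $\mathsf{A}=[\mathsf{z}(u_0 q^{j-1},v_i;s_0)]_{i,j}$ and finds an explicit LU decomposition. The key observation is that the two poles of $w\mapsto\mathsf{z}(u_0 q^{j-1},w;s_0)$, located at $w=q^{1-j}/u_0$ and $w=q^{-j}/u_0$, are zeros of the polynomial $f_{i-1}(w)=\prod_{r=1}^{i-1}(q^r w-\gamma s_0\xi_0)$ whenever $i>j$. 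This lets one write down a lower-triangular $\mathsf{L}^{-1}$ (built from the $f_k$'s and Lagrange-type factors in the $v$'s) such that the identity $(\mathsf{L}^{-1}\mathsf{A})_{ij}=0$ for $i>j$ is precisely the Residue Theorem applied to $B(w)=f_{i-1}(w)\,\mathsf{z}(u_0 q^{j-1},w;s_0)\big/\prod_{r=1}^i(w-v_r)$. The diagonal entries of $\mathsf{L}^{-1}\mathsf{A}$ are then contour integrals with a single pole outside the contour, giving the product form directly. No induction on $N$ is used.
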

\begin{proof}
	Take the $N\times N$ matrix $\mathsf{A}=[\mathsf{z}(u_0 q^{j-1},v_i;s_0)]_{i,j=1}^{N}$
	whose determinant we need to compute.
	We employ the method suggested in \cite[Section 2.6]{krattenthaler1999advanced},
	and consider the LU decomposition $\mathsf{A}=\mathsf{L}\mathsf{U}$, 
	where $\mathsf{L}$ is a lower triangular matrix with ones on the main diagonal, and
	$\mathsf{U}$ is an upper triangular matrix. (The fact that $\mathsf{A}$ 
	is nondegenerate so that to 
	admit an LU decomposition follows from the computations below.)
	Denote 
	\begin{equation}
		\label{eq:f_Q_defs}
		f_k(v) := \prod_{r=1}^{k} (q^r v-\gamma s_0\xi_0),
		\qquad 
		Q_k^j(v):=\frac{f_k(v)}{\prod_{r=1, \, r\ne j}^{k}(v-v_r)}.
	\end{equation}
	We claim that 
	\begin{equation*}
		(\mathsf{L}^{-1})_{ij}=\begin{cases}
			0,&i<j;\\
			1,&i=j;\\
			-Q_{i-1}^j(v_j)/Q_{i-1}^j(v_i),&i>j.
		\end{cases}
	\end{equation*}
	To see this, consider the product $\mathsf{L}^{-1}\mathsf{A}$ with $\mathsf{L}^{-1}$ being the candidate 
	above.
	We would like to show that $\mathsf{L}^{-1}\mathsf{A}$ is upper triangular, that is,
	\begin{equation}
		\label{eq:LU_decomp_proof}
		\begin{split}
			\frac{f_{i-1}(v_i)}{\prod_{r=1}^{i-1}(v_i-v_r)}\,\mathsf{z}(u_0q^{j-1},v_i;s_0)
			&=
			\sum_{k=1}^{i-1}
			\frac{Q_{i-1}^{k}(v_{k})}{v_i-v_{k}}\,\mathsf{z}(u_0 q^{j-1},v_{k};s_0)
			\\&=
			-
			\sum_{k=1}^{i-1}
			\frac{f_{i-1}(v_{k})}{\prod_{r=1,\,r \ne k}^{i}(v_k-v_r)}\,\mathsf{z}(u_0 q^{j-1},v_{k};s_0),
		\end{split}
	\end{equation}
	for all $i>j$
	(here the second equality is just a simplification using \eqref{eq:f_Q_defs}).
	To show \eqref{eq:LU_decomp_proof} we use the Residue Theorem.
	Consider the following function:
	\begin{equation*}
		B(w):=\frac{f_{i-1}(w)}{\prod_{r=1}^{i}(w-v_r)}\,\mathsf{z}(q^{j-1}u_0,w;s_0),\qquad w\in \mathbb{C}.
	\end{equation*}
	It is a rational function with possible poles at $w=v_1,\ldots,v_i $
	and
	$w=q^{1-j}/u_0,q^{-j}/u_0$ coming from $\mathsf{z}$
	\eqref{eq:IK_det_element}.
	However, since $i-1\ge j$, the zeroes coming from 
	$f_{i-1}(w)$ \eqref{eq:f_Q_defs}
	eliminate the two latter poles. 
	Moreover, $B(w)$ is decaying as $O(1/w^2)$ at infinity and thus has zero residue there.
	We see that \eqref{eq:LU_decomp_proof}
	is simply an equality between the residue of $B(w)$ at $w=v_i$ 
	and the sum of minus residues of $B(w)$ at all $w=v_k$, $k=1,\ldots,i-1 $.
	This establishes \eqref{eq:LU_decomp_proof}.

	Having the upper triangularity of $\mathsf{L}^{-1}\mathsf{A}$,
	we can compute the determinant as the product of diagonal elements:
	\begin{equation*}
		\begin{split}
			\det \mathsf{A}=\det (\mathsf{L}^{-1}\mathsf{A})
			&=
			\prod_{i=1}^{N}
			\left(
				\mathsf{z}(u_0q^{i-1},v_{i};s_0)-
				\sum_{k=1}^{i-1}
				\frac{Q_{i-1}^k(v_k)}{Q_{i-1}^{k}(v_i)}\,\mathsf{z}(u_0q^{i-1},v_k;s_0)
			\right)
			\\
			&=
			\frac{\prod_{1\le r<i\le N}(v_i-v_r)}{\prod_{i=1}^{N}f_{i-1}(v_i)}
			\prod_{i=1}^{N}
			\frac{1}{2\pi\mathbf{i}}
			\oint_{c_i}\frac{f_{i-1}(w)\,\mathsf{z}(u_0q^{i-1},w;s_0)}{\prod_{r=1}^{i}(w-v_r)}\,dw,
		\end{split}
	\end{equation*}
	where the contour $c_i$ encircles $v_1,\ldots,v_i $.
	Each $i$-th integrand is regular at infinity and has a
	single pole outside the contour $c_i$.
	This pole comes from $\mathsf{z}$ and is at $w=q^{-i}/u_0$.
	Taking the minus residues at these points gives a product formula
	for the determinant of $\mathsf{A}$. 
	Putting all together, we arrive at the right-hand side of \eqref{eq:IK_property_4}.
\end{proof}

This completes the proof of \Cref{prop:Z_N_det}.
Combining \Cref{prop:Z_N_det,prop:S_as_refined}
we have established \Cref{thm:intro_sHL_refined}.
Note that the parameter $s_0$
in \eqref{eq:Z_N_det} must be replaced by $s_0/\gamma$ to match the refined Cauchy sum, hence 
we get a slightly different determinant
in \Cref{thm:intro_sHL_refined}.

\section{Determinantal identity. Proof of \texorpdfstring{\Cref{thm:intro_det_identity}}{the second main result}}
\label{sec:det_identity_proof}

We will now present an alternative expression for the determinant in the right-hand side of the 
refined Cauchy identity.
Recall the function $\mathsf{z}(u,v;s_0)$ defined by \eqref{eq:IK_det_element},
and also denote
\begin{equation}
	\label{eq:M_Mt_definition}
	\begin{split}
		&
		\mathsf{M}_i(v;\gamma^{-1}s_0):=
		\xi_0^{2i-N}
		v^{N-i-1}
		\\&\hspace{20pt}\times
		\left\{ 
			\left( 1-s_0\xi_0^{-1}v \right)
			\left( v\xi_0^{-1}-s_0 \right)
			\prod_{l=1}^{N}\frac{1-qv u_l}{1-v u_l}
			-
			\gamma^{-1}q^{N-i}
			\left( \gamma-s_0\xi_0^{-1}v \right)
			\left( \gamma q v \xi_{0}^{-1}-s_0 \right)
		\right\};
		\\
		&
		\widetilde{\mathsf{M}}_i(u;\gamma^{-1}s_0)
		:=
		\\&\hspace{20pt}
			u^{N-i-1}
			\left\{
				\left( 1-s_0\xi_0 u \right)
				\left( u-\frac{s_0}{\xi_0} \right)
				\prod_{l=1}^{N}\frac{1-q uv_l}{1-uv_l}
				-
				\gamma^{-1}q^{N-i}
				(\gamma-s_0\xi_0 u)
				\left( \gamma q u-\frac{s_0}{\xi_0} \right)
			\right\}.
	\end{split}
\end{equation}

The following result implies \Cref{thm:intro_det_identity} from the Introduction.
\begin{theorem}
	\label{thm:Cuenca_style_formula}
	We have
	\begin{equation}
		\label{eq:Cuenca_style_formula}
		\begin{split}
			&\frac{\prod_{i,j=1}^{N}(1-qu_iv_j)}{\prod_{1\le i<j\le N}(u_i-u_j)(v_i-v_j)}\,
			\det\left[
				\mathsf{z}(u_i,v_j;\gamma^{-1}s_0)
			\right]_{i,j=1}^{N}
			\\
			&
			\hspace{140pt}=
			\frac{
				\det
				\bigl[ \mathsf{M}_i(v_j;\gamma^{-1}s_0) \bigr]_{i,j=1}^{N}
			}
			{\prod_{1\le i<j\le N}(v_i-v_j)}
			=
			\frac{
				\det
				\bigl[ \widetilde{\mathsf{M}}_i(u_j;\gamma^{-1}s_0) \bigr]_{i,j=1}^{N}
			}
			{\prod_{1\le i<j\le N}(u_i-u_j)}
			.
		\end{split}
	\end{equation}
\end{theorem}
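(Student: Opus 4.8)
The plan is to prove the second equality (the one between the two $\mathsf{M}$-determinants and the $\mathsf{z}$-determinant) by the same Lagrange-interpolation strategy used for \Cref{prop:Z_N_det}, since the left-hand side here is exactly the renormalized Izergin--Korepin partition function $\widetilde{\mathscr Z}^\gamma_N$ with $s_0$ replaced by $\gamma^{-1}s_0$. So it suffices to show that the right-hand side of \eqref{eq:Cuenca_style_formula}, after clearing denominators, satisfies the five conditions of \Cref{Z_properties} (with the substitution $s_0\to\gamma^{-1}s_0$ carried out), and then invoke \Cref{lemma:Lagrange_IK_uniqueness}. Let me focus on the $v$-symmetric side, i.e. $\det[\mathsf M_i(v_j;\gamma^{-1}s_0)]/\prod_{i<j}(v_i-v_j)$; the $u$-symmetric side is handled identically after noting that $\mathsf z(u,v;s_0)$ is symmetric under swapping $u\leftrightarrow v$ together with $\xi_0\leftrightarrow\xi_0^{-1}$, which exchanges the roles of $\mathsf M$ and $\widetilde{\mathsf M}$.

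First I would check the ``easy'' properties. Symmetry in the $v_j$ (property \textbf{1}) is immediate since swapping two columns changes the sign of both $\det[\mathsf M_i(v_j)]$ and the Vandermonde $\prod(v_i-v_j)$; symmetry in the $u_j$ is less obvious from this formula but follows because the whole expression equals the $\mathsf z$-determinant divided by the $u$-Vandermonde, which is manifestly $u$-symmetric after the analogous argument on the $\widetilde{\mathsf M}$ side. Polynomiality of degree $\le N$ (property \textbf{2}): after multiplying by $\prod_{i,j}(1-u_iv_j)$ to clear the denominators inside each $\mathsf M_i$, the $(i,j)$ entry becomes a polynomial in $v_j$ of degree $(N-i-1)+2+(N-1) = 2N-i$ at most, and after subtracting the second term (degree $(N-i-1)+2 = N-i+1$) the bound $2N-i$ still holds; dividing by the $v$-Vandermonde of degree $N-1$ in each $v_j$ leaves degree $\le N$, and one must also verify, as in the original proof, that the top coefficient is ``frozen'' — here this comes from the structure of $\mathsf M_i$, where the leading $v^{2N-i}$ term is $-\gamma^{-1}q^{2(N-i)}\xi_0^{-2}$ times something $i$-independent up to the $q$-power, so the columns of leading coefficients are proportional and the degree-$N$ coefficient, being a Vandermonde-weighted combination, drops by one. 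Property \textbf{5} ($N=1$) is a direct two-line check: $\mathsf M_1(v;\gamma^{-1}s_0)$ with $N=1$ is precisely $\xi_0^{2-1}v^{-1}\{\cdots\}$, and after clearing the single factor $(1-vu_1)$ this matches \eqref{eq:IK_property_5} with $s_0\to\gamma^{-1}s_0$.

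The substantive work is in properties \textbf{3} and \textbf{4}. For property \textbf{3}, setting $u_1=v_1^{-1}$ makes the factor $\prod_{l=1}^N(1-qv_ju_l)/(1-v_ju_l)$ inside $\mathsf M_i(v_j)$ acquire the pole/zero structure $(1-qv_j/v_1)/(1-v_j/v_1)$; one wants the $N\times N$ $\mathsf M$-determinant to collapse to an $(N-1)\times(N-1)$ one times the explicit prefactor in \eqref{eq:IK_property_3}. This should follow by a row/column manipulation: the entries at $v_j=v_1$ (after clearing) all share a common factor once $u_1=v_1^{-1}$, so one column degenerates and Laplace expansion along it isolates the $(N-1)$-dimensional minor, with the leftover factors reassembling into $(1-q)(1-s_0\xi_0\gamma\cdot\gamma^{-1}v_1^{-1})(1-s_0\xi_0^{-1}\gamma\cdot\gamma^{-1}v_1)\prod_{j\ge 2}(1-qv_1^{-1}v_j)(1-qv_1u_j)$ — note the $\gamma$'s cancel against the $\gamma^{-1}s_0$ substitution, reproducing exactly the frozen-vertex prefactor. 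For property \textbf{4}, specializing $u_j=q^{j-1}/(s_0\xi_0\gamma)$ for $j=1,\dots,N$ is the step I expect to be the main obstacle: here the product $\prod_{l=1}^N(1-qvu_l)/(1-vu_l)$ telescopes to $(1-vq^{N}/(s_0\xi_0\gamma))/(1-v/(s_0\xi_0\gamma))$, and after substitution each $\mathsf M_i(v_j)$ becomes a genuine rational-then-polynomial expression whose determinant must be evaluated in closed product form and matched against the right-hand side of \eqref{eq:IK_property_4}. As the paper itself flags (via \Cref{lemma:M_det_4} and the connection to the three-term recurrence for $q$-Krawtchouk polynomials), this determinant does not factor by an elementary row reduction; the approach would be to recognize the matrix $[\mathsf M_i(v_j)]$, after pulling out common factors, as (a similarity transform of) the tridiagonal Jacobi matrix of $q$-Krawtchouk recurrence coefficients from \cite[Section 3.15]{Koekoek1996}, whose characteristic-polynomial / determinant is classically a product, and then to carefully track the normalization. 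Once all five properties are verified for the $\mathsf M$-side and, symmetrically, for the $\widetilde{\mathsf M}$-side, \Cref{lemma:Lagrange_IK_uniqueness} forces all three expressions in \eqref{eq:Cuenca_style_formula} to coincide with $\widetilde{\mathscr Z}^\gamma_N(\,\cdot\,;\gamma^{-1}s_0)$ up to the stated renormalization, completing the proof.
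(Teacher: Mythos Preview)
Your proposal follows essentially the same route as the paper: reduce to \Cref{Z_properties} via \Cref{lemma:Lagrange_IK_uniqueness}, use the $\xi_0\leftrightarrow\xi_0^{-1}$ symmetry of $\mathsf z$ to pass between the $\mathsf M$ and $\widetilde{\mathsf M}$ determinants, and identify property~\textbf{4} as the hard step handled by the $q$-Krawtchouk tridiagonal determinant. Two places where your sketch diverges from what actually works: for property~\textbf{2} the paper does not do a direct degree count in $v_j$ (your count is off by one in places and would need a separate leading-coefficient cancellation), but instead observes that $\mathsf M_i(v;s_0)\prod_l(1-vu_l)$ is \emph{linear} in each $u_j$, giving degree $N$ in each $u_j$ immediately, and then swaps to the $\widetilde{\mathsf M}$ form to get the $v_j$ degree; for property~\textbf{3}, it is not that ``one column degenerates''---after setting $u_N=v_N^{-1}$ the last column simplifies to $\xi_0^{2i-N-1}v_N^{N-i}$ times a common factor, and then the key move is the \emph{row} operation subtracting $v_N\xi_0^{-2}$ times row $i$ from row $i-1$, which makes the matrix block-triangular and exposes the $(N-1)\times(N-1)$ minor.
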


\begin{remark}
	Recall that the right-hand side of the refined Cauchy identity
	in \Cref{thm:intro_sHL_refined}
	is equal to
	\begin{equation*}
		\prod_{j=1}^{N}
		\frac{1}{(1-s_0\xi_0 u_j)(1-\xi_0^{-1}s_0v_j)}\,
		\frac{\prod_{i,j=1}^{N}(1-qu_iv_j)}{\prod_{1\le i<j\le N}(u_i-u_j)(v_i-v_j)}
		\,
		\det\left[
			\mathsf{z}(u_i,v_j;\gamma^{-1}s_0)
		\right]_{i,j=1}^{N}.
	\end{equation*}
	This is the reason for using the parameter $\gamma^{-1}s_0$ in 
	\eqref{eq:M_Mt_definition}--\eqref{eq:Cuenca_style_formula}.
\end{remark}

A feature of the alternative determinantal expressions in
\Cref{thm:Cuenca_style_formula} is that
in them
the dependence on the variables $u_j$ or $v_j$, respectively,
has a product form. 

\medskip

The proof of \Cref{thm:Cuenca_style_formula} occupies the rest of this section.
First, observe that the equality of the two determinantal expressions,
\begin{equation}
	\label{eq:Cuenca_style_proof1}
		\frac{
			\det
			\bigl[ \mathsf{M}_i(v_j;\gamma^{-1}s_0) \bigr]_{i,j=1}^{N}
		}
		{\prod_{1\le i<j\le N}(v_i-v_j)}
		=
		\frac{
			\det
			\bigl[ \widetilde{\mathsf{M}}_i(u_j;\gamma^{-1}s_0) \bigr]_{i,j=1}^{N}
		}
		{\prod_{1\le i<j\le N}(u_i-u_j)},
\end{equation}
readily follows from the symmetry of the function $\mathsf{z}$:
\begin{equation*}
	\mathsf{z}(u,v;s_0)
	=
	\mathsf{z}(v\xi_0^{-2},u\xi_0^{2};s_0).
\end{equation*}
Therefore, in the proof we can freely pass between the two expressions
in \eqref{eq:Cuenca_style_proof1},
depending on convenience.

Arguing as in the proof of
\Cref{prop:Z_N_det} from \Cref{subsub:step3_IK}, it suffices to check that the following
function
\begin{equation}
	\label{eq:M_det}
	\mathscr{M}_N^\gamma(u_1,\ldots,u_N\mid v_1,\ldots,v_N ;s_0 )=
	\prod_{i,j=1}^{N}(1-u_iv_j)\,
		\frac{
			\det
			\bigl[ \mathsf{M}_i(v_j;s_0) \bigr]_{i,j=1}^{N}
		}
		{\prod_{1\le i<j\le N}(v_i-v_j)}
\end{equation}
satisfies \Cref{Z_properties}.
Here we are using the normalization from \eqref{eq:Z_N_det_RHS},
and also have replaced the parameter $\gamma^{-1}s_0$
by $s_0$ to directly match the properties.

Properties \textbf{1} (symmetry in $u_i$ and $v_j$ separately) and \textbf{5} (evaluation for $N=1$) are straightforward
from \eqref{eq:M_det}.
For \textbf{2}, observe that 
$\mathsf{M}_i(v;s_0)\prod_{l=1}^{N}(1-v u_l)$ is linear in each $u_j$ separately, 
which implies that
$\mathscr{M}_N^\gamma$
is a polynomial in each $u_j$ of degree $N$.
Using the second determinant in \eqref{eq:Cuenca_style_proof1}
shows polynomialily in each $v_j$, too.

Let us now establish property \textbf{3}:
\begin{lemma}
	\label{lemma:M_det_3}
	Setting $u_N=v_N^{-1}$, we have the recurrence
	\begin{equation*}
		\begin{split}
			&\mathscr{M}^\gamma_N(u_1,u_2,\ldots,u_N\mid v_1,v_2,\ldots,v_N;s_0  )
			\Big\vert_{u_N=v_N^{-1}}
			\\&\hspace{60pt}=
			(1-q)
			(1-s_0\xi_0 \gamma v_N^{-1})
			(1-s_0\xi_0^{-1}\gamma v_N)
			\prod_{j=2}^{N}
			\left( 1-qv_N^{-1}v_j \right)\left( 1-qv_Nu_j \right)
			\\&\hspace{120pt}\times
			\mathscr{M}^\gamma_{N-1}(u_1,\ldots,u_{N-1}\mid v_1,\ldots,v_{N-1};s_0  ).
		\end{split}
	\end{equation*}
\end{lemma}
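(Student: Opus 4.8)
The plan is to verify the recurrence of property \textbf{3} directly on the determinant $\mathscr{M}_N^\gamma$, paralleling the argument for the Izergin--Korepin determinant in \Cref{subsub:step3_IK} but with one extra ingredient (a short round of row operations). First I would clear the denominators $\prod_{l}(1-v u_l)$ hidden inside $\mathsf{M}_i(v;s_0)$ by absorbing the factor $\prod_{i=1}^{N}(1-u_iv_j)$ of $\prod_{i,j}(1-u_iv_j)$ into the $j$-th column, turning the $(i,j)$ entry into the genuine polynomial
\[
	P_i(v_j):=\mathsf{M}_i(v_j;s_0)\prod_{l=1}^{N}(1-u_lv_j),
\]
so that $\mathscr{M}_N^\gamma=\det[P_i(v_j)]_{i,j=1}^{N}\big/\prod_{1\le i<j\le N}(v_i-v_j)$ (this is the same reduction already used for property \textbf{2}). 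Structurally, $P_i(v)=\xi_0^{2i-N}v^{N-i-1}$ times a difference of two terms, the first carrying $\prod_{l}(1-qv u_l)$ and the second carrying $\prod_{l}(1-v u_l)$, with the $q^{N-i}$-dependence sitting only in the second term.

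The key observation concerns the $N$-th column after setting $u_N=v_N^{-1}$. In $P_i(v_N)$ the second term acquires the factor $(1-u_Nv_N)=0$ and drops out, while in the first term $\prod_{l=1}^{N}(1-qv_Nu_l)=(1-q)\prod_{l=1}^{N-1}(1-qv_Nu_l)$; hence
\[
	P_i(v_N)\big|_{u_N=v_N^{-1}}=\xi_0^{2i-N}v_N^{N-i-1}\,(1-q)\,(1-\gamma s_0\xi_0^{-1}v_N)(v_N\xi_0^{-1}-\gamma s_0)\prod_{l=1}^{N-1}(1-qv_Nu_l).
\]
All the $i$-dependence of the $N$-th column is thus the geometric factor $\xi_0^{2i-N}v_N^{N-i-1}$, with ratio $\xi_0^2/v_N$ between consecutive rows. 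I would then perform the row operations $R_i\mapsto R_i-(\xi_0^2/v_N)\,R_{i-1}$ for $i=N,N-1,\dots,2$; these do not change the determinant and collapse the $N$-th column to the single nonzero entry $P_1(v_N)$ in the first row.

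The computational core is to check that, for $j<N$, the combination $P_i(v_j)-(\xi_0^2/v_N)P_{i-1}(v_j)$ evaluated at $u_N=v_N^{-1}$ equals $\xi_0(1-v_j/v_N)(1-qv_j/v_N)$ times the analogous polynomial $P^{(N-1)}_{i-1}(v_j)$ of the $(N-1)$-variable system: writing $\prod_{l=1}^{N}(1-qv_ju_l)=(1-qv_j/v_N)\prod_{l<N}(1-qv_ju_l)$ and $\prod_{l=1}^{N}(1-u_lv_j)=(1-v_j/v_N)\prod_{l<N}(1-u_lv_j)$, the factor $v_j/v_N$ produced by $(\xi_0^2/v_N)$ combines the two terms, the common factor $(1-v_j/v_N)(1-qv_j/v_N)$ pulls out of the column, and the shift $q^{N-i}\rightarrow q^{(N-1)-(i-1)}$ matches the $(N-1)$-system exactly. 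Expanding along the $N$-th column, pulling the $i$-independent factor $\xi_0(1-v_j/v_N)(1-qv_j/v_N)$ out of each of the first $N-1$ columns, and recognizing the remaining $(N-1)\times(N-1)$ minor as $\mathscr{M}_{N-1}^\gamma\prod_{1\le i<j\le N-1}(v_i-v_j)$, one is left with an explicit scalar prefactor.

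It remains to simplify that prefactor. Telescoping $\prod_{j<N}(1-v_j/v_N)$ against $\prod_{j<N}(v_j-v_N)$ from the Vandermonde $\prod_{1\le i<j\le N}(v_i-v_j)$ in the denominator cancels the leftover powers of $v_N$ and the sign $(-1)^{N+1}$ from the cofactor expansion, and the quadratic factor simplifies via $\frac{\xi_0}{v_N}(1-\gamma s_0\xi_0^{-1}v_N)(v_N\xi_0^{-1}-\gamma s_0)=(1-s_0\xi_0\gamma v_N^{-1})(1-s_0\xi_0^{-1}\gamma v_N)$, while the surviving $\prod_{l<N}(1-qv_Nu_l)$ and $\prod_{j<N}(1-qv_j/v_N)$ give the frozen product $\prod_{j}(1-qv_N^{-1}v_j)(1-qv_Nu_j)$ over the remaining indices $j=1,\dots,N-1$. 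Collecting everything reproduces the claimed recurrence. The main obstacle is precisely this bookkeeping step --- verifying the clean factorization of $P_i(v_j)-(\xi_0^2/v_N)P_{i-1}(v_j)$ onto the $(N-1)$-system polynomial, and matching all powers of $\xi_0$ and $v_N$, the Vandermonde signs, and the single $(1-q)$; the rest is routine.
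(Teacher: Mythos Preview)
Your proof is correct and follows essentially the same route as the paper: clear denominators to get the polynomial entries $P_i(v_j)$, observe that the $N$-th column becomes geometric after setting $u_N=v_N^{-1}$, perform the obvious row operations to reduce that column to a single nonzero entry, and check that the remaining entries factor through $P^{(N-1)}_{i-1}(v_j)$. The only cosmetic difference is the direction of the row operations---you take $R_i\mapsto R_i-(\xi_0^2/v_N)R_{i-1}$ (leaving the first row intact), whereas the paper takes $R_{i-1}\mapsto R_{i-1}-(v_N/\xi_0^2)R_i$ (leaving the last row intact)---which is immaterial.
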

\begin{proof}
	Note that \eqref{eq:IK_property_4} has the substitution $u_1=v_1^{-1}$, but due to symmetry this is the same property.
	To establish the claim, observe that the last column of the matrix 
	\begin{equation}
		\label{eq:Cuenca_style_proof4}
		\Bigl[\mathsf{M}_i(v_j;s_0)\prod_{l=1}^{N}(1-v_ju_l)\Bigr]_{i,j=1}^{N}
	\end{equation}
	simplifies after substituting $u_N=v_N^{-1}$, and its $i$-th element becomes
	\begin{equation}
		\label{eq:Cuenca_style_proof4_5}
		\xi_0^{2i-N-1}v_N^{N-i}\cdot
		\underbrace{(1-q)(1-\gamma s_0v_N/\xi_0)(1-\xi_0\gamma s_0 v_N^{-1})\prod_{l=2}^{N}(1-q u_l/u_N)}_{\text{factors out}}.
	\end{equation}
	Taking out the factors indicated above (note that they match the right-hand side of the claim), 
	perform the following row operations to the matrix. 
	Subtract each $i$-th row multiplied by $v_N/\xi_0^2$ from the $(i-1)$-st row,
	$i=2,\ldots,N $.
	These operations do not change the determinant, 
	but make the matrix block-diagonal, with zeroes in the last column
	everywhere except the main diagonal. The $(N,N)$-th entry of the matrix
	is given by \eqref{eq:Cuenca_style_proof4_5} with $i=N$.

	After these transformations, one readily sees that the submatrix of
	\eqref{eq:Cuenca_style_proof4} formed by the first $N-1$ rows and columns
	has determinant proportional to $\mathscr{M}^{\gamma}_{N-1}$. Indeed, the matrix elements 
	after the transformations are equal to 
	\begin{equation*}
		-\frac{(v_N-v_j)(v_N-q v_j)}{\xi_0 v_N}
		\,\mathsf{M}_i^{(N-1)}(v_j;s_0)\prod_{l=1}^{N-1}(1-v_ju_l),\qquad 
		i,j=1,\ldots,N-1,
	\end{equation*}
	where 
	by $\mathsf{M}_i^{(N-1)}$ we mean the matrix element involved in the function $\mathscr{M}^{\gamma}_{N-1}$
	of the smaller rank.
	Taking into account 
	the Vandermonde in the $v_j$'s in the denominator, we see that
	this implies the claim.
\end{proof}

The proof of property \textbf{4} requires to compute a nontrivial determinant:
\begin{lemma}
	\label{lemma:M_det_4}
	We have
	\begin{equation*}
		\begin{split}
			&\mathscr{M}^\gamma_N\left(  
				(s_0\xi_0 \gamma)^{-1},q(s_0\xi_0\gamma)^{-1},\ldots,q^{N-1}(s_0\xi_0\gamma)^{-1}
				\mid v_1,\ldots,v_N;s_0 
			\right)
			\\&\hspace{120pt}=
			q^{N^2}
			\prod_{i,j=1}^{N}\left( 1-v_i\frac{q^{j-1}}{s_0\xi_0\gamma} \right)
			\prod_{j=1}^{N}(1-\gamma q^{-j+1})(1-s_0^2 \gamma q^{-j}).
		\end{split}
	\end{equation*}
\end{lemma}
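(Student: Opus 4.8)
The plan is to evaluate the determinant in \eqref{eq:M_det} after the specialization $u_i = u_0 q^{i-1}$, where $u_0 = (s_0\xi_0\gamma)^{-1}$, and to recognize the resulting matrix as essentially a tridiagonal matrix whose determinant factors in a product form. First I would substitute $u_l = u_0 q^{l-1}$ into the product $\prod_{l=1}^{N}\frac{1-q v u_l}{1-v u_l}$ appearing in each $\mathsf{M}_i(v;s_0)$. This product telescopes: $\prod_{l=1}^{N}\frac{1-q v u_0 q^{l-1}}{1-v u_0 q^{l-1}} = \frac{1-q^N v u_0}{1-v u_0}$. So, after clearing the common denominator $\prod_{i,j=1}^N(1-u_iv_j) = \prod_{j=1}^N \prod_{l=1}^N (1 - v_j u_0 q^{l-1})$, the matrix element $\mathsf{M}_i(v_j;s_0)\prod_l(1-v_ju_l)$ becomes, up to the power $\xi_0^{2i-N} v_j^{N-i-1}$ common to a whole row, a polynomial in $v_j$ of low degree with coefficients depending on $i$ only through $q^{N-i}$ and through the residual factor $\prod_{l\neq 1}(1-v_j u_0 q^{l-1})$ — I should double check exactly which $q$-shifted factor of $u_0$ survives, since the zero of $1-s_0\xi_0^{-1}v$ type terms interacts with it. The upshot is that the matrix $[\mathsf{M}_i(v_j;s_0)\prod_l(1-v_ju_l)]$ can be written as a product of a Vandermonde-type matrix $[v_j^{k}]$ (which, divided by $\prod_{i<j}(v_i-v_j)$, gives $1$) with a small matrix $\mathsf{N}$ whose entries depend on $i$ and on the degree $k$ only, and it is the determinant of $\mathsf{N}$ that I need.

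The second step is to identify $\det \mathsf{N}$. After extracting row factors $\xi_0^{2i-N}$ and arranging by powers of $v$, each row contributes a three-term expression in consecutive powers $v^{N-i-1}, v^{N-i}, v^{N-i+1}$ coming from $(1-s_0\xi_0^{-1}v)(v\xi_0^{-1}-s_0)$ versus $\gamma^{-1}q^{N-i}(\gamma - s_0\xi_0^{-1}v)(\gamma q v\xi_0^{-1}-s_0)$, plus whatever survives of the telescoped product. This is exactly the structure of a tridiagonal matrix. As the text signals in the discussion preceding \Cref{thm:Cuenca_style_formula} and in the reference to \cite[Section 3.15]{Koekoek1996}, this tridiagonal matrix is the Jacobi (three-term recurrence) matrix of the $q$-Krawtchouk orthogonal polynomials, whose determinant — being the product of diagonal entries for the associated leading-principal-minor / monic-polynomial evaluation, or equivalently a known $q$-Krawtchouk value — is a product. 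I would either cite the explicit three-term recurrence coefficients from \cite[Section 3.15]{Koekoek1996} and compute $\det \mathsf{N}$ as the product of the relevant pivots, or directly verify by an LU-type computation (as in \Cref{lemma:u0_det_computation}, following \cite[Section 2.6]{krattenthaler1999advanced}) that the determinant equals $\prod_{j=1}^N(1-\gamma q^{-j+1})(1-s_0^2\gamma q^{-j})$ up to an explicit monomial in $q$. Combining $\det \mathsf{N}$ with the cleared denominator $\prod_{i,j}(1-v_iu_0 q^{j-1})$ and the $q^{N^2}$ from the accumulated powers of $q$ (coming from $\prod_i q^{N-i}$ and from the $v^{N-i-1}$ alignment) should produce exactly the right-hand side of \Cref{lemma:M_det_4}.

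The main obstacle I expect is the bookkeeping in step two: correctly separating the telescoped product's surviving factor (a single term $1 - q^N v u_0$ or, after clearing denominators, its complement) from the quadratic factors $(1-s_0\xi_0^{-1}v)(v\xi_0^{-1}-s_0)$, so that the matrix genuinely becomes tridiagonal after passing to the power basis, and then matching the recurrence coefficients precisely against the $q$-Krawtchouk ones in \cite{Koekoek1996} (including getting the normalization/monicity conventions and the power of $q$ right). A secondary subtlety is that the specialization point $u_0 = (s_0\xi_0\gamma)^{-1}$ is exactly where $1 - s_0\xi_0 u_i$ vanishes for $i$ such that $q^{i-1}=1$, i.e.\ at $i=1$, so the renormalization built into $\mathscr{M}_N^\gamma$ via $\prod_{i,j}(1-u_iv_j)$ rather than the weight denominators must be handled carefully; but since $\mathscr{M}_N^\gamma$ is defined as a polynomial this causes no genuine pole, only the need to track which factors survive. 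Once the tridiagonal identification is made, the evaluation is a standard (if $q$-deformed) computation, so I would present the product formula for $\det \mathsf{N}$ as the key lemma and defer its verification to the LU/orthogonal-polynomial computation, thereby completing the proof of \Cref{lemma:M_det_4} and hence of \Cref{thm:Cuenca_style_formula}.
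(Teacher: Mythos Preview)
Your proposal is correct and follows essentially the same route as the paper: telescope the product $\prod_l\frac{1-qvu_l}{1-vu_l}$ to a single ratio, factor the resulting matrix $[\mathsf{M}_i(v_j;s_0)]$ as a tridiagonal matrix times the Vandermonde $[v_j^{N-i}]$, and evaluate the tridiagonal determinant via its connection to $q$-Krawtchouk polynomials. One simplification you are missing in the bookkeeping: after the telescoping, $\mathsf{M}_i(v;s_0)$ is already a polynomial in $v$ (the single surviving denominator $1-vu_0$ cancels against the factor $\gamma - s_0\xi_0^{-1}v$ in the second brace term, since $u_0=(s_0\xi_0\gamma)^{-1}$), so the prefactor $\prod_{i,j}(1-u_iv_j)$ stays entirely outside the determinant and no ``residual factor $\prod_{l\ne1}(1-v_ju_0q^{l-1})$'' enters the matrix --- this is what makes the tridiagonal structure clean. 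The paper then computes the tridiagonal determinant not by LU but by exhibiting explicit eigenvectors $F_i(k)={}_3\phi_2(q^{i-N},q^{k-N},s_0^2q^{-i};q^{1-N},0;q,q)$ and reading off the eigenvalues $q^N(1-\gamma q^{i-N})(1-\gamma s_0^2 q^{-i})$, whose product gives the claimed $q^{N^2}\prod_j(1-\gamma q^{1-j})(1-s_0^2\gamma q^{-j})$.
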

\begin{proof}
	Substituting $u_j=q^{j-1}/(s_0\xi_0 \gamma)$, we have
	\begin{equation*}
		\prod_{l=1}^{N}\frac{1-q v u_l}{1-vu_l}=
		\frac{s_0\xi_0\gamma-q^{N}v}{s_0\xi_0\gamma-v},
	\end{equation*}
	which simplifies the matrix elements $\mathsf{M}_i$ as follows:
	\begin{equation*}
		\mathsf{M}_i(v;s_0)=
		\xi_0^{2i-N}
		v^{N-i-1}
		\left\{ 
			\left( 1-\gamma s_0\xi_0^{-1}v \right)
			\left(q^{N}\xi_0^{-1}v - s_0\gamma \right)
			-
			\gamma q^{N-i}
			\left( 1-s_0\xi_0^{-1}v \right)
			\left( q v \xi_{0}^{-1}-s_0 \right)
		\right\}.
	\end{equation*}
	We see that $\mathsf{M}_i(v;s_0)$ is a polynomial in $v$ which has two or three nonzero terms, and 
	\begin{equation*}
		\deg \mathsf{M}_i(v;s_0)=\begin{cases}
			N-1,&i=1;\\
			N+1-i,&2\le i\le N.
		\end{cases}
	\end{equation*}
	This suggests the following representation of the matrix:
	\begin{equation*}
		\left[ \mathsf{M}_i(v_j;s_0) \right]_{i,j=1}^{N}
		=
		\left[ 
			\begin{matrix}
			a_1  & b_1     &       &  0 \\
			c_1  & a_2     &  b_2      & \\
			0& \ddots & \ddots &   \\  & \ddots & \ddots & b_{N-1} \\
			0  &        & c_{N-1}     & a_N 
		\end{matrix}
		\right]
		\cdot
		\left[ v_j^{N-i} \right]_{i,j=1}^{N},
	\end{equation*}
	where the elements of the tridiagonal matrix are given by
	\begin{equation}
		\begin{split}
			a_i&=\xi ^{2 i-N-1} 
			\left(
				q^N(1-\gamma q^{1-i})
				+s_0^2 \gamma\left( \gamma-q^{N-i} \right)
			\right);\\
			b_i&=\gamma s_0 \xi_0^{2i-N}\left( q^{N-i}-1 \right)
			;\\
			c_i&=\gamma s_0\xi_0^{2i-N}q^{N-i}(1-q^{i})
			.
		\end{split}
		\label{eq:abc_tridiagonal}
	\end{equation}
	It now remains to check that the determinant of this tridiagonal matrix is equal to 
	\begin{equation}
		\label{eq:abc_tridiagonal_det}
		q^{N^2}
		\prod_{j=1}^{N}(1-\gamma q^{-j+1})(1-s_0^2 \gamma q^{-j}).
	\end{equation}
	First, observe that the powers of $\xi_0$ in \eqref{eq:abc_tridiagonal}
	can be omitted by taking a conjugation of our tridiagonal matrix by the antidiagonal matrix
	with the entries $\xi_0^i$ on the side diagonal. In what follows we thus assume that $\xi_0=1$.
	Let us also multiply $b_i$ by $s_0$ and divide $c_i$ by $s_0$, this does not change the 
	determinant. Therefore, now we have (by reusing the same notation)
	\begin{equation}
		\label{eq:abc_tridiagonal_2}
		a_i=
			q^N(1-\gamma q^{1-i})
			+s_0^2 \gamma\left( \gamma-q^{N-i} \right)
			,\qquad 
		b_i=\gamma s_0^2 \left( q^{N-i}-1 \right)
		,\qquad 
		c_i=q^{N-i}(1-q^{i}),
	\end{equation}
	and need to compute the $N\times N$ tridiagonal determinant formed by these quantities.
	Denote by $T_N$ the tridiagonal matrix corresponding to the entries
	\eqref{eq:abc_tridiagonal_2}.
	
	The form of the coefficients \eqref{eq:abc_tridiagonal_2} suggests that the 
	eigenvectors of 
	$T_N$ can be matched to some $q$-hypergeometric orthogonal 
	polynomials on the finite lattice $\left\{ 1,2,\ldots,N  \right\}$. 
	The right family of orthogonal polynomials
	can be guessed by looking at 
	the eigenvectors of, say, $T_5$, and 
	computing an orthogonality measure on $\left\{ 1,2,\ldots,N  \right\}$ for them with a computer algebra 
	system. One sees that 
	this orthogonality measure is the $q$-deformed binomial distribution, and so the orthogonal polynomials
	are the \emph{$q$-Krawtchouk} ones. See, e.g., \cite[Section 3.15]{Koekoek1996}, for their definition.

	We would not make direct use of the definition or properties of the $q$-Krawtchouk polynomials.
	Instead, define
	\begin{align*}
		F_i(k)
		&:={}_3\phi_2
		\bigl( 
			q^{i-N},q^{k-N},s_0^2 q^{-i};q^{1-N},0;q,q
		\bigr)
		\\&=
		\sum_{r=0}^{N-1}
		\frac{(q^{i-N};q)_r(q^{k-N};q)_r(s_0^2q^{-i};q)_r}
		{(q^{1-N};q)_r}\,
		\frac{q^r}{(q;q)_r},
	\end{align*}
	where $_3\phi_2$ is the $q$-hypergeometric series, and in the second line we have explicitly expanded its definition.
	Note that the series it terminating.
	
	With the notation \eqref{eq:abc_tridiagonal_2}, we have
	\begin{equation}
		\label{eq:abc_tridiagonal_eigenvalues}
		a_k F_i(k)+b_k F_i(k+1)+c_{k-1}F_i(k-1)=
		q^N(1-\gamma q^{i-N})(1-\gamma s_0^2 q^{-i})F_i(k)
	\end{equation}
	for all $i,k=1,\ldots,N $.
	Indeed, this follows
	from term-by-term manipulations with the terminating $q$-hypergeometric series.
	These manipulations are straightforward and we omit them.
	
	Identity \eqref{eq:abc_tridiagonal_eigenvalues} implies that $F_i(\cdot)$ are eigenvectors of $T_N$.
	Therefore, the
	desired determinant \eqref{eq:abc_tridiagonal_det}
	is the product of the eigenvalues of the $F_i(\cdot)$'s
	which appear in the right-hand side of 
	\eqref{eq:abc_tridiagonal_eigenvalues}.
	This completes the proof.
\end{proof}

The last statement completes checking of 
\Cref{Z_properties}
for the function
$\mathscr{M}^\gamma_N$
\eqref{eq:M_det},
and (combined with \Cref{lemma:Lagrange_IK_uniqueness})
establishes \Cref{thm:Cuenca_style_formula}.
The latter implies \Cref{thm:intro_det_identity} from the Introduction.

\section{Degeneration to interpolation Hall--Littlewood polynomials}
\label{sec:interp_HL}

In this section we specialize our results 
to interpolation Hall--Littlewood polynomials.
In \Cref{sub:interp_Mac_definition,sub:homog_Macd_polys,sub:HL_deg_of_interpolation,sub:refined_Cauchy_interp_HL}
we recall the definition of the interpolation Macdonald polynomials, 
their Hall--Littlewood degeneration, and the refined Cauchy
identity from \cite{cuenca2018interpolation}. 
Then in \Cref{sub:intHL_degen_all} 
we explain how to get the same results by specializing our spin Hall--Littlewood
statements.
Finally, in \Cref{sub:Olshanski_from_us}
we discuss another Cauchy type identity \cite[Proposition 9.4]{olshanski2019interpolation}
for interpolation Hall--Littlewood polynomials.

\subsection{Interpolation Macdonald polynomials}
\label{sub:interp_Mac_definition}

Let us recall interpolation Macdonald polynomials from
\cite{knop1996symmetric}, \cite{okounkov1997binomial},
\cite{sahi1996interpolation}.
We mostly follow the notation of 
\cite{olshanski2019interpolation},
\cite{cuenca2018interpolation}
(and also \cite{Macdonald1995} when talking about homogeneous polynomials).

Fix the number of variables $N$ and two parameters $\qm,\tm\in(0,1)$
(we use different font so that these Macdonald parameters are not confused with our 
main quantization parameter $q$).
For every $\lambda \in \mathrm{Sign}_N$
there exists a unique (up to a scalar factor)
symmetric polynomial $I_{\lambda}(u_1,\ldots,u_N; \qm,\tm )$ (depending on the parameters $\qm,\tm$)
such that \cite{Sah1994spectrum}, \cite{okounkov_newton_int}:
\begin{itemize}
	\item The degree of $I_{\lambda}$ is $|\lambda|$;
	\item For all $\mu\in \mathrm{Sign}_N$ with $\mu\ne \lambda$, $|\mu|\le |\lambda|$, 
		we have
		$I_{\lambda}(\qm^{-\mu_1},\qm^{-\mu_2}t,\ldots,\qm^{-\mu_N}t^{N-1} ;\qm,\tm)=0$;
	\item 
		We have $I_{\lambda}(\qm^{-\lambda_1},\qm^{-\lambda_2}\tm,\ldots,\qm^{-\lambda_N}\tm^{N-1} ;\qm,\tm)\ne 0$.
\end{itemize}
We fix normalization so that the 
top component in $I_{\lambda}(u_1,\ldots,u_N;\qm,\tm )$
with respect to the lexicographic ordering is equal to $u_1^{\lambda_1}\ldots u_N^{\lambda_N} $.

\subsection{Homogeneous Macdonald polynomials}
\label{sub:homog_Macd_polys}

The top, degree $|\lambda|$, homogeneous component of $I_{\lambda}(u_1,\ldots,u_N;\qm,\tm )$ is the 
symmetric Macdonald polynomial $P_{\lambda}(u_1,\ldots,u_N;\qm,\tm )$
\cite[Ch VI.4]{Macdonald1995}. 
We recall the Cauchy identity for Macdonald polynomials
which holds when $|u_iv_j|<1$ for all $i,j=1,\ldots,N $:
\begin{equation}
	\label{eq:Macdonald_Cauchy}
	\sum_{\lambda\in \mathrm{Sign}_N}
	P_\lambda(u_1,\ldots,u_N;\qm,\tm )\,
	Q_\lambda(v_1,\ldots,v_N;\qm,\tm )=
	\prod_{i,j=1}^{N}\frac{(\tm u_iv_j;\qm)_{\infty}}{(u_iv_j;\qm)_{\infty}}.
\end{equation}
Here $Q_\lambda(\cdot\,;\qm,\tm)=b_\lambda(\qm,\tm)P_\lambda(\cdot\,;\qm,\tm)$ 
are the dual Macdonald polynomials which are proportional to the original ones.
The coefficients $b_\lambda$ are explicit \cite[VI.(6.19)]{Macdonald1995},
but we will need an expression for them only in a particular case.
We refer to 
\cite[Ch. VI]{Macdonald1995}
for alternative characterizations and more properties of Macdonald polynomials. 

\subsection{Hall--Littlewood degeneration of interpolation Macdonald polynomials}
\label{sub:HL_deg_of_interpolation}

When $\qm=0$, the Macdonald polynomials become the Hall--Littlewood symmetric polynomials
(depending on $\tm$)
which we denote by 
$P_\lambda^{HL}=P_\lambda^{HL}(\cdot;\tm)$,
$Q_\lambda^{HL}=Q_\lambda^{HL}(\cdot;\tm)$.
We have the following explicit symmetrization formula:
\begin{equation}
	\label{eq:Q_HL_polynomial}
	\begin{split}
		Q^{HL}_\lambda(u_1,\ldots,u_N;\tm )&=
		b_\lambda(0,\tm)\,P^{HL}_\lambda(u_1,\ldots,u_N;\tm )\\&=
		\frac{(1-\tm)^N}{(\tm;\tm)_{N-\ell(\lambda)}}
		\sum_{\sigma\in S_N}
		\sigma\left( \prod_{1\le i<j\le N}\frac{u_i-\tm u_j}{u_i-u_j}\prod_{i=1}^{N}v_i^{\lambda_i} \right),
	\end{split}
\end{equation}
and $b_\lambda(0,\tm)=\prod_{r\ge1}(\tm;\tm)_{m_r(\lambda)}$.

As shown in 
\cite[Theorem 5.11]{cuenca2018interpolation},
under a suitable
Hall--Littlewood degeneration the Macdonald
interpolation polynomials $I_\lambda$ also take an explicit form.
Denote
\begin{equation}
	\label{eq:F_HL_from_I_Mac}
	F_\lambda^{HL}(u_1,\ldots,u_N;\tm ):=\lim_{\qm\to0}
	I_\lambda(u_1,\ldots,u_N;1/\qm,1/\tm ),
\end{equation}
then
\begin{equation}
	\label{eq:F_HL_interp}
	F_\lambda^{HL}(u_1,\ldots,u_N;\tm )=
	(1-\tm)^N\prod_{r\ge0}\frac1{(\tm;\tm)_{m_r(\lambda)}}
	\sum_{\sigma\in S_N}
	\sigma\left( 
		\prod_{i<j}\frac{u_i-\tm u_j}{u_i-u_j}\prod_{i=1}^{\ell(\lambda)}
		u_i^{\lambda_i}(1 - \tm^{1-N}u_i^{-1})
	\right).
\end{equation}
The top degree homogeneous component in $F_\lambda^{HL}$ is the 
Hall--Littlewood polynomial $P_\lambda^{HL}$.
While the interpolation property 
formulated in \Cref{sub:interp_Mac_definition}
does not determine the polynomials 
uniquely in the degeneration \eqref{eq:F_HL_from_I_Mac},
it is still convenient to 
refer to $F_\lambda^{HL}$ \eqref{eq:F_HL_interp} as 
the \emph{interpolation Hall--Littlewood polynomials}.

\begin{remark}
	The homogeneous Macdonald polynomials 
	$P_\lambda(\cdot\,;\qm,\tm)$
	are invariant under the change $(\qm,\tm)\to(1/\qm,1/\tm)$
	\cite[VI.(4.14)(iv)]{Macdonald1995} while the interpolation Macdonald polynomials are not.
	To perform the Hall--Littlewood degeneration in 
	\eqref{eq:F_HL_from_I_Mac} one needs the inverse parameters $(1/\qm,1/\tm)$.
\end{remark}

\subsection{Refined Cauchy identity for interpolation Hall--Littlewood polynomials}
\label{sub:refined_Cauchy_interp_HL}

The following refined Cauchy identity holds
for the interpolation Hall--Littlewood polynomials.
If $|u_iv_j|<1$ for all $i,j=1,\ldots,N $, then
\cite[Proposition 4.2]{cuenca2018interpolation}
\begin{equation}
	\label{eq:Cuenca_refined_Cauchy}
	\begin{split}
		&\sum_{\lambda\in \mathrm{Sign}_N}
		(\chi \tm^{1-N};\tm)_{N-\ell(\lambda)}\,
		F_\lambda^{HL}(u_1,\ldots,u_N ;\tm)\,Q_\lambda^{HL}(v_1,\ldots,v_N ;\tm)
		\\&\hspace{6pt}=
		\frac1
		{\prod_{1\le i<j\le N}(u_i-u_j)}
		\det
		\left[ 
			u_j^{N-i-1}
			\left\{
				\left( u_j-\tm^{1-N} \right)
				\prod_{l=1}^{N}\frac{1-\tm u_jv_l}{1-u_jv_l}
				+
				\tm^{1-i}
				\left( 1-\chi u_j\right)
			\right\}
		\right]_{i,j=1}^N.
	\end{split}
\end{equation}
This identity is proven in \cite{cuenca2018interpolation} by using the
symmetrization formulas 
\eqref{eq:Q_HL_polynomial}, \eqref{eq:F_HL_interp}
for the functions in the left-hand side of \eqref{eq:Cuenca_refined_Cauchy}, 
and employing direct manipulations with the summands to produce the determinantal expression.

\begin{remark}
	\label{rmk:symm_function_algebra_remark_for_refined_Cauchy}
	The statement \cite[Proposition 4.2]{cuenca2018interpolation}
	is more general in that the number of the variables
	$v_j$ is allowed to be arbitrary, not necessarily the same $N$ as the 
	number of the $u_i$ variables.
	However, both sides of \eqref{eq:Cuenca_refined_Cauchy}
	are symmetric polynomials in $v_j$ which satisfy the stability property
	of the form $f_N(v_1,\ldots,v_{N-1},v_N )\vert_{v_N=0}=
	f_{N-1}(v_1,\ldots,v_{N-1} )$.
	This means that identity with $N$ variables extends to an identity between
	elements of the algebra of symmetric functions \cite[Ch. I.2]{Macdonald1995},
	so that the number of the variables $v_j$ can be arbitrary.

	This extension is a feature of the type of the determinant in the right-hand side
	of \eqref{eq:Cuenca_refined_Cauchy}, and is not directly possible for the
	Izergin--Korepin form of the determinant which we present 
	in \Cref{prop:upgrade_of_Cuenca_det_identity} below.
\end{remark}

As noted in \cite[Section 4.5]{cuenca2018interpolation}, by taking the 
top degree components in \eqref{eq:Cuenca_refined_Cauchy}
and using the refined Cauchy identity for the 
Hall--Littlewood polynomials 
\cite{warnaar2008bisymmetric},
\cite[Theorem 4]{wheeler2015refined},
one arrives at
the following nontrivial determinantal identity:
\begin{equation}
	\label{eq:WZJ_Cuenca_det_identity}
	\begin{split}
		&\frac{\prod_{i,j=1}^{N}(1-\tm u_iv_j)}{\prod_{1\le i<j\le N}(u_i-u_j)(v_i-v_j)}\,
		\det\left[
			\frac{1-\chi \tm^{1-N}+(\chi \tm^{1-N}-\tm)u_iv_j }{(1-u_iv_j)(1-\tm u_iv_j)}
		\right]_{i,j=1}^{N}
		\\
		&
		\hspace{80pt}=
		\frac{1}
		{\prod_{1\le i<j\le N}(u_i-u_j)}
		\,\det
		\left[ u_j^{N-i}
		\left\{ \prod_{l=1}^{N}\frac{1-\tm u_jv_l}{1-u_jv_l}-\tm^{1-i}\chi \right\}
		\right]_{i,j=1}^{N}
		.
	\end{split}
\end{equation}
In the next subsection
we use this determinantal identity
as the key to understand how our results (\Cref{thm:intro_sHL_refined,thm:intro_det_identity})
degenerate to \eqref{eq:Cuenca_refined_Cauchy}, \eqref{eq:WZJ_Cuenca_det_identity}.
In the process we also observe how the interpolation Hall--Littlewood polynomials
arise as a particular case of our inhomogeneous spin Hall--Littlewood rational functions.

\subsection{From spin Hall--Littlewood to interpolation Hall--Littlewood}
\label{sub:intHL_degen_all}

Observe that identity \eqref{eq:WZJ_Cuenca_det_identity}
is a particular case of \Cref{thm:intro_det_identity}
when $s_0=0$, $\xi_0$ is arbitrary (it vanishes from the formula when $s_0=0$),
$q=\tm$,
and $\gamma=q^{-N}\chi$.
This suggests the following generalization of the determinantal identity
\eqref{eq:WZJ_Cuenca_det_identity} which
incorporates the right-hand side of
\eqref{eq:Cuenca_refined_Cauchy}:

\begin{proposition}
	\label{prop:upgrade_of_Cuenca_det_identity}
	We have
	\begin{equation}
		\label{eq:Cuenca_upgrade}
		\begin{split}
			&
			\frac1
			{\prod_{1\le i<j\le N}(u_i-u_j)}
			\det
			\left[ 
				u_j^{N-i-1}
				\left\{
					\left( u_j-\tm^{1-N} \right)
					\prod_{l=1}^{N}\frac{1-\tm u_jv_l}{1-u_jv_l}
					+
					\tm^{1-i}
					\left( 1-\chi u_j\right)
				\right\}
			\right]_{i,j=1}^N
			\\
			&\hspace{30pt}
			=
			\frac{\prod_{i,j=1}^{N}(1-\tm u_iv_j)}{\prod_{1\le i<j\le N}(u_i-u_j)(v_i-v_j)}\,
			\det\left[
				\frac{(1-\tm ^{-N}\chi)\tm(1-u_iv_j)+(1-\tm)(1-\tm^{1-N}v_j)}
				{(1-u_iv_j)(1-\tm u_iv_j)}
			\right]_{i,j=1}^{N}
			.
		\end{split}
	\end{equation}
\end{proposition}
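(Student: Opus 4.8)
The plan is to prove \eqref{eq:Cuenca_upgrade} by the Lagrange interpolation scheme used for \Cref{prop:Z_N_det} and \Cref{thm:Cuenca_style_formula}. The key preliminary observation is that the left-hand side of \eqref{eq:Cuenca_upgrade} is literally the right-hand side of Cuenca's refined Cauchy identity \eqref{eq:Cuenca_refined_Cauchy}, hence equals $\sum_{\lambda\in\mathrm{Sign}_N}(\chi\tm^{1-N};\tm)_{N-\ell(\lambda)}\,F_\lambda^{HL}(u_1,\dots,u_N;\tm)\,Q_\lambda^{HL}(v_1,\dots,v_N;\tm)$. After multiplying both sides of \eqref{eq:Cuenca_upgrade} by $\prod_{i,j=1}^N(1-u_iv_j)$ one obtains functions that are symmetric polynomials separately in the $u_i$ and in the $v_j$ of degree at most $N$ in each variable: on the left this is transparent from the $F_\lambda^{HL}Q_\lambda^{HL}$ form together with the determinantal expression of the right-hand side of \eqref{eq:Cuenca_refined_Cauchy}, while on the right it is the determinant-over-Vandermonde argument of \Cref{subsub:step1_IK} combined with the cancellation of the determinant's denominator against $\prod_{i,j}(1-u_iv_j)(1-\tm u_iv_j)$. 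It then remains to check, for both sides, the analogues of the recurrence and evaluation properties of \Cref{Z_properties}, after which \Cref{lemma:Lagrange_IK_uniqueness} forces the equality.

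For the recurrence (the analogue of Property~\textbf{3}), I would work with the determinant on the left-hand side of \eqref{eq:Cuenca_upgrade}, whose $(i,j)$ entry carries the convenient product $\prod_{l=1}^N\frac{1-\tm u_jv_l}{1-u_jv_l}$: setting $u_1=v_1^{-1}$ (equivalently $u_N=v_N^{-1}$) makes one column collapse after the row operations of \Cref{lemma:M_det_3}, reducing the $N\times N$ determinant to the $(N-1)\times(N-1)$ one times an explicit product of ``frozen'' factors. The matching recurrence on the Izergin--Korepin side is obtained by multiplying a row by $1-u_1v_1$ and specializing $u_1=v_1^{-1}$, exactly as in \Cref{subsub:step3_IK}; alternatively, the right-hand side of \eqref{eq:Cuenca_upgrade} can be recognized as the partition function of a six vertex model with a one-sidedly decorated domain wall --- the asymmetric factor $1-\tm^{1-N}v_j$ in the entry reflects that the frozen inhomogeneity $\tm^{1-N}$ sits only in the $v$-column --- and then the recurrence follows from frozen vertices as for $\mathscr{Z}^\gamma_N$. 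The base case $N=1$ is immediate.

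The main obstacle is the evaluation property (the analogue of Property~\textbf{4}): specializing $u_1,\dots,u_N$ to a geometric progression with ratio $\tm$ and a suitable common prefactor. On the left-hand side this geometric specialization of the $u_i$ collapses the $S_N$-symmetrization in the formula \eqref{eq:F_HL_interp} for $F_\lambda^{HL}$ so that a single permutation survives, giving the expected explicit product; on the Izergin--Korepin side one is left with a nontrivial $N\times N$ auxiliary determinant, rational in the $v_j$ with a product structure, which has to be shown to evaluate to the same product. I expect this to be the one step requiring real computation, to be carried out either by the residue/LU-decomposition method of \Cref{lemma:u0_det_computation} or by the tridiagonal $q$-Krawtchouk argument of \Cref{lemma:M_det_4}, adapted to the present entries. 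Once all the properties hold, \Cref{lemma:Lagrange_IK_uniqueness} completes the proof. (Conceptually, \eqref{eq:Cuenca_upgrade} may also be derived by degenerating \Cref{thm:intro_sHL_refined} via the limit transition of \Cref{prop:sHL_to_iHL} and dragging cross vertices through the lattice with the Yang--Baxter equation as in \Cref{prop:equality_Z_S}; but since both determinantal forms in \eqref{eq:Cuenca_upgrade} are already at hand --- the first from \eqref{eq:Cuenca_refined_Cauchy}, the second as an Izergin--Korepin type object --- the uniqueness argument above is the most economical route.)
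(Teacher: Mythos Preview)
Your approach is correct in principle but is far more laborious than necessary, and you have overlooked the direct route the paper takes. The paper's proof is a single paragraph: \eqref{eq:Cuenca_upgrade} is obtained by specializing the already-proven \Cref{thm:intro_det_identity} (equivalently \Cref{thm:Cuenca_style_formula}). One renames $q\to\tm$, passes to the combinations $a=s_0\xi_0$, $b=s_0/\xi_0$, and sends $a\to 0$, $b\to\tm^{1-N}$, $\gamma\to\tm^{-N}\chi$. Under this limit the entries of the $\widetilde{\mathsf{M}}$-determinant in \Cref{thm:intro_det_identity} become exactly the entries on the left of \eqref{eq:Cuenca_upgrade}, and the Izergin--Korepin entries become those on the right; the identity is then immediate.

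Your plan instead re-runs the entire Lagrange interpolation machinery of \Cref{Z_properties} and \Cref{lemma:Lagrange_IK_uniqueness} from scratch on this degenerate case, including the hard evaluation step which you defer to an ``adapted'' version of \Cref{lemma:u0_det_computation} or \Cref{lemma:M_det_4}. But those lemmas are already proven in full generality, so the specialized evaluations would be consequences rather than new computations; you would simply be re-deriving a special case of \Cref{thm:Cuenca_style_formula}. Your parenthetical does gesture at a degeneration argument, but you route it through \Cref{thm:intro_sHL_refined} and the Yang--Baxter equation, which is again a detour: the determinantal identity \Cref{thm:intro_det_identity} is purely algebraic and specializes directly, with no need to pass through the refined Cauchy sum, the vertex-model partition functions, or Cuenca's identity \eqref{eq:Cuenca_refined_Cauchy}.
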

\begin{proof}
	Start with identity \eqref{eq:intro_det_identity}
	from \Cref{thm:intro_det_identity}. Rename the parameter $q$ by $\tm$.
	Then change the parameters $s_0,\xi_0$
	to their combinations
	$s_0\xi_0$ and $s_0/\xi_0$.
	In particular, $s_0^2=(s_0\xi_0)(s_0/\xi_0)$.
	After this, specialize 
	\begin{equation}
		\label{eq:a_b_chi_specialization}
		s_0\xi_0\to 0,\qquad 
		s_0/\xi_0\to \tm^{1-N},\qquad 
		\gamma=\tm^{-N}\chi.
	\end{equation}
	The resulting determinantal identity is equivalent to
	\eqref{eq:Cuenca_upgrade}.
\end{proof}

Let us now look at the spin Hall--Littlewood functions 
under the same degeneration as in the proof of 
\Cref{prop:upgrade_of_Cuenca_det_identity}.

\begin{proposition}
	\label{prop:sHL_to_iHL}
	Fix $\lambda\in \mathrm{Sign}_N$.	
	When we rename $q$ to $\tm$, set $s_x=0$, $\xi_x=1$ for all $x\ge1$, and 
	specialize $s_0$ and $\xi_0$
	as in 
	\eqref{eq:a_b_chi_specialization}, we have
	\begin{equation*}
		\begin{split}
			&
			\xi_0^{-\ell(\lambda)}
			\mathsf{F}_\lambda(u_1,\ldots,u_N )\to 
			\prod_{r\ge0}(\tm;\tm)_{m_r(\lambda)}\cdot
			F_\lambda^{HL}(u_1,\ldots,u_N;\tm )
			;
			\\
			&
			\xi_0^{\ell(\lambda)}
			\mathsf{F}^*_\lambda(v_1,\ldots,v_N )\to 
			\prod_{j=1}^{N}\frac{1}{1-v_j\tm^{1-N}}
			\prod_{r\ge 1}\frac1{(\tm;\tm)_{m_r(\lambda)}}\cdot
			Q_\lambda^{HL}(v_1,\ldots,v_N;\tm ).
		\end{split}
	\end{equation*}
\end{proposition}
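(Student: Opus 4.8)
The plan is to perform the degeneration \eqref{eq:a_b_chi_specialization} directly inside the symmetrization formula \eqref{eq:F_symmetrization} (and its dual \eqref{eq:F_star_explicit}) and track how each factor behaves. First I would rewrite the building block $\varphi_k(u)$ in \eqref{eq:phi_notation} after setting $s_x=0$ and $\xi_x=1$ for all $x\ge 1$: for $k\ge1$ only the $j=0$ term of the product involves $s_0,\xi_0$, while all $j\ge1$ terms collapse to plain powers of $u$, so that
\begin{equation*}
	\varphi_k(u)=(1-\tm)\,u^{k-1}(\xi_0 u-s_0),\qquad k\ge1,
	\qquad
	\varphi_0(u)=\frac{1-\tm}{1-s_0\xi_0 u}.
\end{equation*}
Under the substitution $a=s_0\xi_0\to 0$, $b=s_0/\xi_0\to \tm^{1-N}$ (hence $s_0\to 0$ but $s_0/\xi_0$ stays finite, which forces $\xi_0\to 0$ at the same rate), one has $\xi_0 u-s_0=\xi_0(u-b)\to \xi_0(u-\tm^{1-N})$, so $\xi_0^{-1}\varphi_{\lambda_i}(u)\to (1-\tm)\,u^{\lambda_i-1}(u-\tm^{1-N})$ for each nonzero part, and $\varphi_0(u)\to 1-\tm$ for each zero part. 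Collecting the $\xi_0$'s over the $\ell(\lambda)$ nonzero parts gives exactly the prefactor $\xi_0^{-\ell(\lambda)}$ in the statement, and comparing $\prod_i \xi_0^{-1}\varphi_{\lambda_i}(u_i)$ with the product $\prod_{i=1}^{\ell(\lambda)}u_i^{\lambda_i}(1-\tm^{1-N}u_i^{-1})$ appearing in \eqref{eq:F_HL_interp} — noting $u^{\lambda_i-1}(u-\tm^{1-N})=u^{\lambda_i}(1-\tm^{1-N}u^{-1})$ — matches everything up to the overall constant $(1-\tm)^N\prod_{r\ge0}(\tm;\tm)_{m_r(\lambda)}^{-1}$ versus the $(1-\tm)^N$ in \eqref{eq:F_HL_interp}; the $\prod_{r\ge0}(\tm;\tm)_{m_r(\lambda)}$ discrepancy is precisely the stated right-hand side factor.

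For the dual statement I would feed the same specialization into \eqref{eq:F_star_explicit}. The overall prefactor there is $\prod_{r\ge0}(s_r^2;\tm)_{m_r(\lambda)}/(\tm;\tm)_{m_r(\lambda)}$; since $s_x=0$ for $x\ge1$ and $s_0\to0$, every $q$-Pochhammer $(s_r^2;\tm)_{m_r}$ tends to $1$, leaving $\prod_{r\ge0}(\tm;\tm)_{m_r(\lambda)}^{-1}$. Inside the symmetrization, the analogue of $\varphi_k$ uses $\xi_j^{-1}$ in place of $\xi_j$; for $j\ge1$ this is harmless since $\xi_x=1$ there, and the only surviving $s_0,\xi_0$-dependence is the $j=0$ factor $\xi_0^{-1}v-s_0=\xi_0^{-1}(v-a)\to\xi_0^{-1}(v-0)=\xi_0^{-1}v$ for nonzero parts, together with the head factor $(1-\tm)/(1-s_{\lambda_i}\xi_{\lambda_i}^{-1}v_i)$ which for $\lambda_i=0$ becomes $(1-\tm)/(1-s_0\xi_0^{-1}v_i)\to(1-\tm)/(1-\tm^{1-N}v_i)$ and for $\lambda_i\ge1$ becomes $1-\tm$. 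Thus each nonzero part contributes a factor $\xi_0^{-1}$ (from $\xi_0^{-1}v-s_0$) while each zero part contributes the pole $(1-\tm^{1-N}v_i)^{-1}$; multiplying by $\xi_0^{\ell(\lambda)}$ clears the former, and the $m_0(\lambda)$ poles $(1-\tm^{1-N}v_i)^{-1}$ reassemble — after using symmetry of the resulting expression in all $v_j$ — into $\prod_{j=1}^N(1-\tm^{1-N}v_j)^{-1}$ times $v_i^{\lambda_i}$-type powers matching $Q_\lambda^{HL}$ via \eqref{eq:Q_HL_polynomial}, with the normalization $(1-\tm)^N\prod_{r\ge1}(\tm;\tm)_{m_r(\lambda)}^{-1}$ there (note the product over $r\ge1$, not $r\ge0$) accounting for the stated $\prod_{r\ge0}(\tm;\tm)_{m_r(\lambda)}^{-1}\cdot b_\lambda(0,\tm)=\prod_{r\ge1}(\tm;\tm)_{m_r(\lambda)}^{-1}$ arithmetic.

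The main obstacle I anticipate is the bookkeeping of the $v_j$-pole structure in the dual case: naively the limit produces a product $\prod_{\{i:\lambda_i=0\}}(1-\tm^{1-N}v_i)^{-1}$ which depends on which variables get assigned zero parts \emph{before} symmetrization, whereas the claimed answer has the symmetric $\prod_{j=1}^N(1-\tm^{1-N}v_j)^{-1}$. The resolution is that inside the full symmetrization sum over $S_N$ each term carries its own pole assignment, and one must check that after summing and clearing the common denominator $\prod_{i<j}(v_i-v_j)$ the non-symmetric poles combine; concretely one can pull $\prod_{j=1}^N(1-\tm^{1-N}v_j)^{-1}$ out front and verify the remaining sum is a symmetric polynomial, or invoke the known symmetry of $\mathsf{F}^*_\lambda$ (which holds before the limit and is preserved under it) to conclude the limit is automatically symmetric and hence equal to the symmetric expression. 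A secondary, purely routine point is verifying the exact power of $(1-\tm)$ and the interplay of $(\tm;\tm)$-factors; these are finite computations, so I would state the matching of normalizations and leave the arithmetic to the reader.
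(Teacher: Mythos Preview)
Your approach is the same as the paper's --- direct term-by-term degeneration of the symmetrization formulas --- and the treatment of $\mathsf{F}_\lambda$ is correct. But there is a genuine bookkeeping error in the dual case that creates a spurious difficulty.

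In \eqref{eq:F_star_explicit}, for a part $\lambda_i\ge1$ the $j=0$ factor in the product is $\dfrac{\xi_0^{-1}v_i-s_0}{1-s_0\xi_0^{-1}v_i}$. You tracked the numerator $\xi_0^{-1}v_i-s_0=\xi_0^{-1}(v_i-a)\to\xi_0^{-1}v_i$ but dropped the denominator. Under the specialization this denominator is $1-b\,v_i\to 1-\tm^{1-N}v_i$, which does \emph{not} go to $1$. (In the non-dual case the analogous denominator is $1-a\,u_i\to1$, which is why the same shortcut was harmless for $\mathsf{F}_\lambda$.) Consequently, after multiplying by $\xi_0$ the full contribution for $\lambda_i\ge1$ is $\dfrac{(1-\tm)\,v_i^{\lambda_i}}{1-\tm^{1-N}v_i}$, not $(1-\tm)\,v_i^{\lambda_i}$.

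So \emph{every} index $i=1,\ldots,N$ --- whether $\lambda_i=0$ or $\lambda_i\ge1$ --- carries the factor $(1-\tm^{1-N}v_i)^{-1}$. Their product $\prod_{i=1}^N(1-\tm^{1-N}v_i)^{-1}$ is symmetric in $v_1,\ldots,v_N$ and factors straight out of the symmetrization; the remaining sum is exactly the symmetrization in \eqref{eq:Q_HL_polynomial}. The ``main obstacle'' you anticipate (non-symmetric poles attached only to the zero parts) is an artifact of the omitted denominator and does not arise.
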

\begin{proof}
	This follows from explicit symmetrization formulas
	\eqref{eq:F_symmetrization},
	\eqref{eq:F_star_explicit},
	\eqref{eq:F_HL_interp},
	\eqref{eq:Q_HL_polynomial}
	for 
	the functions $\mathsf{F}_\lambda,\mathsf{F}_\lambda^*,F_\lambda^{HL}$,
	and $Q_\lambda^{HL}$, respectively.
	Namely, for the first claim observe using notation \eqref{eq:phi_notation}:
	\begin{equation*}
		\begin{split}
			\varphi_0(u)&=\frac{1-q}{1-s_0\xi_0 u}\to 1-\tm;\\
			\xi_0^{-1}\varphi_k(u)&=\frac{1-q}{1-s_k\xi_k u}
			\frac{u- s_0/\xi_0}{1-s_0\xi_0u}
			\prod_{j=1}^{k-1}\frac{\xi_j u-s_j}{1-s_j\xi_j u}
			\to
			(1-\tm)u^{k-1}(u-\tm^{1-N}),\qquad k>0.
		\end{split}
	\end{equation*}
	For the second claim we have
	\begin{equation*}
		\begin{split}
			\varphi_0(v)\Big\vert_{\text{$\xi_x\to \xi_x^{-1}$}}
			&=\frac{1-q}{1-v s_0/\xi_0}\to \frac{1-\tm}{1-v\tm^{1-N}};\\
			\xi_0\varphi_k(v)
			\Big\vert_{\text{$\xi_x\to \xi_x^{-1}$}}
			&=\frac{1-q}{1-vs_k/\xi_k}
			\frac{v-s_0 \xi_0}{1-vs_0/\xi_0}
			\prod_{j=1}^{k-1}\frac{v/\xi_j-s_j}{1-vs_j/\xi_j}
			\to
			\frac{(1-\tm)v^{k}}
			{1-v\tm^{1-N}}
			,\qquad k>0.
		\end{split}
	\end{equation*}
	It remains to put together all the prefactors, and we are done.
\end{proof}

The degeneration in \Cref{prop:sHL_to_iHL}
produces an alternative proof of the refined Cauchy identity 
for the interpolation Hall--Littlewood polynomials:
\begin{proof}[New proof of {\cite[Proposition 4.2]{cuenca2018interpolation}}]
	As explained in \Cref{rmk:symm_function_algebra_remark_for_refined_Cauchy}, 
	we may assume that the number of the variables $v_j$ is the same $N$
	as the number of the $u_i$ variables. That is, we need to establish \eqref{eq:Cuenca_refined_Cauchy}.
	Starting with our refined Cauchy identity
	\eqref{eq:intro_sHL_refined} from \Cref{thm:intro_sHL_refined}
	and specializing the parameters as 
	\begin{equation}
		\label{eq:ab_spec_in_functions}
		q=\tm,\qquad
		\textnormal{$s_x=0$, $\xi_x=1$ for all $x\ge1$},\qquad 
		s_0\xi_0\to 0,\qquad 
		s_0/\xi_0\to \tm^{1-N},\qquad 
		\gamma=\tm^{-N}\chi,
	\end{equation}
	we obtain for its terms using \Cref{prop:sHL_to_iHL}:
	\begin{multline*}
		\frac{(\gamma q;q)_{m_0(\lambda)}(\gamma^{-1}s_0^2;q)_{m_0(\lambda)}}{(q;q)_{m_0(\lambda)}(s_0^2;q)_{m_0(\lambda)}}
		\,
		\mathsf{F}_{\lambda}(u_1,\ldots,u_N )\,
		\mathsf{F}^*_{\lambda}(v_1,\ldots,v_N )
		\\
		\to
		\frac{(\tm^{1-N}\chi;\tm)_{m_0(\lambda)}}{\prod_{j=1}^{N}(1-v_j\tm^{1-N})}
		\,
		F_\lambda^{HL}(u_1,\ldots,u_N ;\tm)\,Q_\lambda^{HL}(v_1,\ldots,v_N ;\tm).
	\end{multline*}
	Specializing
	the right-hand side of \eqref{eq:intro_sHL_refined}
	in the same way and rewriting the Izergin--Korepin type determinant
	using \eqref{eq:Cuenca_upgrade} (which follows from \Cref{thm:intro_det_identity}),
	we arrive
	at the desired statement.
\end{proof}

The vanishing property definition of the interpolation
Macdonald polynomials $I_\lambda$ recalled in \Cref{sub:interp_Mac_definition}
implies some vanishing of their Hall--Littlewood degeneration
$F_\lambda^{HL}$~\eqref{eq:F_HL_from_I_Mac}.
Namely, one can check that
$F_\lambda^{HL}$ mush vanish at
all points 
\begin{equation*}
	X_m:=(0,0,\ldots,0,\tm^{-m},\tm^{-m-1},\ldots,\tm^{1-N}),
	\qquad 
	0\le m\le \ell(\lambda)+\#\{i\colon \lambda_i\ge2\}.
\end{equation*}
Note that 
unlike in the Macdonald case,
these vanishing properties are not sufficient to uniquely determine $F_\lambda^{HL}$.

One might ask whether the vanishing $F_\lambda^{HL}(X_m;\tm)=0$
could be deduced from the vertex model
interpretation of $F_\lambda^{HL}$
following from \Cref{prop:sHL_to_iHL}.
Here let us suggest a possible argument without working out all the details. 
Fix $N,\lambda$, and let $\ell=\ell(\lambda)$,
$k=\#\{i\colon \lambda_i\ge2\}$ (clearly, $\ell\ge k$).
In each up-right path ensemble
contributing to the partition function $F_\lambda^{HL}$,
$\ell$ horizontal arrows must exit the zeroth column, 
and $k$ horizontal arrows must exit the first column.
Therefore, the partition function $F_\lambda^{HL}$ can be refined as follows:
\begin{equation}
	\label{eq:F_vanishing_to_conj}
	F_\lambda^{HL}(u_1,\ldots,u_N;\tm)=
	\sum_{I,J}W_{I,J}\prod_{i\in I}(u_i-\tm^{1-N+I^c_{<i}})
	\prod_{j\in J}u_j,
\end{equation}
where $I,J\subseteq\{1,\ldots,N \}$,
$|I|=\ell$,
$|J|=k$,
$I^c_{<i}=\#\{r\colon r\notin I,\ r<i\}$,
and the sets $I,J$ should satisfy ordering:
\begin{equation*}
	\textnormal{If }I=\{i_1<\ldots<i_\ell \},
	\
	J=\{j_1<\ldots<j_k \},
	\ \textnormal{then}\ 
	i_1\le j_1,\ 
	\ldots
	,\ 
	i_k\le j_k.
\end{equation*}
Namely, $I$ and $J$ record the locations where horizontal paths exit the zeroth
and the first columns, respectively.
Finally, $W_{I,J}$ are the refined partition functions corresponding
to summing over all possible path configurations beyond 
the first column. 

For example, when $N=3$, $\ell=2$, and $k=1$, we get
\begin{equation}
	\begin{split}
		F_{\lambda}^{HL}(u_1,u_2,u_3;\tm)
		&=
		(u_1-\tm^{-2})(u_2-\tm^{-2})
		\left[ u_1 W_{ \left\{ 1,2 \right\},\{1\} }
			+ 
			u_2 W_{ \left\{ 1,2 \right\},\{2\} }+
			u_3 W_{ \left\{ 1,2 \right\},\{3\} }
		\right]
		\\
		&\hspace{20pt}+
		(u_1-\tm^{-2})(u_3-\tm^{-1})
		\left[ u_1 W_{ \left\{ 1,3 \right\},\{1\} }
			+ 
			u_2 W_{ \left\{ 1,3 \right\},\{2\} }+
			u_3 W_{ \left\{ 1,3 \right\},\{3\} }
		\right]
		\\
		&\hspace{20pt}+
		(u_2-\tm^{-1})(u_3-\tm^{-1})
		\left[ 
			u_2 W_{ \left\{ 2,3 \right\},\{2\} }+
			u_3 W_{ \left\{ 3,3 \right\},\{3\} }
		\right]
	\end{split}
	\label{eq:F_vanishing_to_conj2}
\end{equation}

The following conjecture would ensure the desired vanishing
due to the symmetry of the function $F_\lambda^{HL}$
in the $u_j$'s:
\begin{conjecture}
	For each $m=0,1,\ldots,\ell+k $, there exists a permutation
	of the entries of $X_m$ 
	such that assigning $(u_1,\ldots,u_N )$ to this permutation of 
	the entries of $X_m$
	leads to vanishing of all the terms in \eqref{eq:F_vanishing_to_conj}.
\end{conjecture}

Let us check this conjecture for the above example
\eqref{eq:F_vanishing_to_conj2}. With our $\ell,k$ we need to check all values 
$m=0,1,2,3$.
For $X_3=(0,0,0)$, any permutation leads to the vanishing.
For $X_2=(0,0,\tm^{-2})$, we set $u_1=\tm^{-2}$ and $u_2=u_3=0$.
For $X_1=(0,\tm^{-1},\tm^{-2})$ and $X_0=(1,\tm^{-1},\tm^{-2})$, 
we set $u_1=\tm^{-2}$, $u_2=\tm^{-1}$, and $u_3=0$ or $1$, respectively.
We see that these assignments of the variables indeed lead to the vanishing of the 
function $F_\lambda^{HL}$ written in the refined form \eqref{eq:F_vanishing_to_conj2}.

\subsection{Cauchy type identity for interpolation functions}
\label{sub:Olshanski_from_us}

The interpolation Hall--Littlewood polynomials $F_\lambda^{HL}$
satisfy another Cauchy type identity with product right-hand side.
This identity was established in \cite[Proposition 9.4]{olshanski2019interpolation}
and reads
\begin{equation}
	\label{eq:Olshanski_Cauchy}
	\sum_{\mu} 
	F_\mu^{HL}(u_1,\ldots,u_N;\tm)\,
	G_\mu^{HL}(y_1,\ldots,y_K ;\tm)
	=
	\prod_{j=1}^{K}\frac{y_j-\tm^{1-N}}{y_j-\tm}
	\prod_{i=1}^{N}\prod_{j=1}^{M}\frac{y_j-\tm u_i}{y_j-u_i},
\end{equation}
where the sum is over signatures with $\ell(\mu)\le \min(N,K)$
(the number of zeroes does not matter),
and $G_\lambda^{HL}$ is another family of functions.
In fact, these functions are 
symmetric formal power series in the $y_j^{-1}$'s.
We refer to \cite[Section 3.2 and Section 9.2]{olshanski2019interpolation} 
for details.
An explicit combinatorial formula for $G_\lambda^{HL}$ is given in
\cite[Lemma 9.3]{olshanski2019interpolation}.
It may be interpreted as a representation of $G_\lambda^{HL}$
as a partition function of a certain vertex model.

Let us now connect 
identity \eqref{eq:Olshanski_Cauchy} to
the Cauchy identity for the spin Hall--Littlewood 
functions \cite[Corollary 4.13]{BorodinPetrov2016inhom}. Let us recall the latter:
\begin{equation}
	\label{eq:sHL_Cauchy_S5}
	\sum_{\lambda\in \mathrm{Sign}_N}
	\mathsf{F}_\lambda(u_1,\ldots,u_N )
	\,
	\mathsf{G}^*_\lambda(v_1,\ldots,v_K )=
	\frac{(q;q)_N}{\prod_{i=1}^{N}(1-s_0\xi_0 u_i)}\,
	\prod_{i=1}^{N}\prod_{j=1}^{K}\frac{1-q u_iv_j}{1-u_iv_j}.
\end{equation}
The functions $\mathsf{G}_\lambda^*$ are \cite[Theorem 4.14.2]{BorodinPetrov2016inhom}
given by the following symmetrization formula
(for $K\ge \ell(\lambda)$, otherwise the function vanishes):
\begin{multline*}
	\mathsf{G}_\lambda^*
	(v_1,\ldots,v_K )
	=\frac{(q;q)_N}{(q;q)_{m_0(\lambda)}(q;q)_{K-\ell(\lambda)}}
	\prod_{r\ge1}\frac{(s_r^2;q)_{m_r(\lambda)}}{(q;q)_{m_r(\lambda)}}
	\sum_{\sigma\in S_K}
	\sigma\Biggl(
		\prod_{1\le i<j\le K}
		\frac{v_i-q v_j}{v_i-v_j}
		\\\times
		\prod_{i=1}^{\ell(\lambda)}\frac{v_i}{v_i-s_0 \xi_0}
		\prod_{i=\ell(\lambda)+1}^{K}(1-v_i q^{m_0(\lambda)}s_0/\xi_0)
		\prod_{j=1}^{K}
		\left( 
			\varphi_{\lambda_j}(v_j)
			\Big\vert_{\text{$\xi_x\to \xi_x^{-1}$}}
		\right)
	\Biggr),
\end{multline*}
where $\lambda\in \mathrm{Sign}_N$.
Similarly to the second part of \Cref{prop:sHL_to_iHL}, we obtain:
\begin{lemma}
	\label{lemma:G_lambda_spec}
	Specializing the parameters as in \eqref{eq:ab_spec_in_functions},
	we have for all $\lambda\in \mathrm{Sign}_N$:
	\begin{multline*}
		\xi_0^{\ell(\lambda)}\mathsf{G}_\lambda^*
		(v_1,\ldots,v_K )\to
		\frac{(1-\tm)^K(\tm;\tm)_N}{(\tm;\tm)_{K-\ell(\lambda)}}
		\prod_{j=1}^{K}\frac1{1-v_j \tm^{1-N}}
		\prod_{r\ge 0}\frac{1}{(\tm;\tm)_{m_r(\lambda)}}
		\\\times
		\sum_{\sigma\in S_K}
		\sigma\Biggl(
			\prod_{1\le i<j\le K}
			\frac{v_i-\tm v_j}{v_i-v_j}
			\prod_{i=\ell(\lambda)+1}^{K}(1-v_i \tm^{1-\ell(\lambda)})
			\prod_{j=1}^{K}
			v_j^{\lambda_j}
		\Biggr).
	\end{multline*}
\end{lemma}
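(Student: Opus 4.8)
The plan is to start from the explicit symmetrization formula for $\mathsf{G}_\lambda^*(v_1,\ldots,v_K)$ displayed just above the statement, and to pass to the limit under the specialization \eqref{eq:ab_spec_in_functions} one factor at a time, exactly parallel to the computation of the limits of the $\varphi$'s in the proof of \Cref{prop:sHL_to_iHL}. Renaming $q$ to $\tm$ turns $\prod_{1\le i<j\le K}\frac{v_i-qv_j}{v_i-v_j}$ into $\prod_{1\le i<j\le K}\frac{v_i-\tm v_j}{v_i-v_j}$ and leaves the symmetrization over $S_K$ unchanged, so it suffices to track what happens to the $\sigma$-independent prefactor and to the expression inside $\sigma(\cdot)$.

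For the summand, the two boundary products behave as follows. Since $s_0\xi_0=a\to0$, each factor $\frac{v_i}{v_i-s_0\xi_0}\to1$, so $\prod_{i=1}^{\ell(\lambda)}\frac{v_i}{v_i-s_0\xi_0}\to1$. For the other product, using $m_0(\lambda)=N-\ell(\lambda)$ together with $q\to\tm$ and $s_0/\xi_0=b\to\tm^{1-N}$ we get $q^{m_0(\lambda)}s_0/\xi_0\to\tm^{N-\ell(\lambda)}\tm^{1-N}=\tm^{1-\ell(\lambda)}$, hence $\prod_{i=\ell(\lambda)+1}^K(1-v_iq^{m_0(\lambda)}s_0/\xi_0)\to\prod_{i=\ell(\lambda)+1}^K(1-v_i\tm^{1-\ell(\lambda)})$. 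Finally, the factors $\varphi_{\lambda_j}(v_j)\vert_{\xi_x\to\xi_x^{-1}}$ are precisely those computed in the proof of \Cref{prop:sHL_to_iHL}: for $\lambda_j=0$ one has $\varphi_0(v_j)\vert_{\xi_x\to\xi_x^{-1}}\to\frac{1-\tm}{1-v_j\tm^{1-N}}$, while for $\lambda_j>0$ one has $\xi_0\,\varphi_{\lambda_j}(v_j)\vert_{\xi_x\to\xi_x^{-1}}\to\frac{(1-\tm)v_j^{\lambda_j}}{1-v_j\tm^{1-N}}$; in both cases the limit equals $\xi_0^{-\mathbf{1}_{\lambda_j>0}}$ times $\frac{(1-\tm)v_j^{\lambda_j}}{1-v_j\tm^{1-N}}$. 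Multiplying over $j=1,\ldots,K$ produces $\xi_0^{-\ell(\lambda)}(1-\tm)^K\prod_{j=1}^K\frac{v_j^{\lambda_j}}{1-v_j\tm^{1-N}}$, and the $\xi_0^{-\ell(\lambda)}$ here is exactly what the prefactor $\xi_0^{\ell(\lambda)}$ in the statement cancels.

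For the $\sigma$-independent prefactor, $\frac{(q;q)_N}{(q;q)_{m_0(\lambda)}(q;q)_{K-\ell(\lambda)}}\to\frac{(\tm;\tm)_N}{(\tm;\tm)_{m_0(\lambda)}(\tm;\tm)_{K-\ell(\lambda)}}$, while $s_r=0$ for $r\ge1$ gives $(s_r^2;q)_{m_r(\lambda)}=1$, so $\prod_{r\ge1}\frac{(s_r^2;q)_{m_r(\lambda)}}{(q;q)_{m_r(\lambda)}}\to\prod_{r\ge1}\frac1{(\tm;\tm)_{m_r(\lambda)}}$; combining with the $(\tm;\tm)_{m_0(\lambda)}^{-1}$ above yields $\prod_{r\ge0}\frac1{(\tm;\tm)_{m_r(\lambda)}}$. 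Assembling these pieces — moving the $\frac1{1-v_j\tm^{1-N}}$, the $(1-\tm)^K$, and the products of $q$-Pochhammer symbols out of the symmetrization, and collecting what remains inside $\sigma(\cdot)$ — gives exactly the claimed right-hand side. The only point requiring care is the bookkeeping of the $\xi_0$-powers (one $\xi_0^{-1}$ per nonzero part of $\lambda$) and of which Pochhammer symbols survive the degeneration; there is no genuine analytic obstacle, since each factor has a finite nonzero limit and the specialization is taken inside a finite sum over $S_K$.
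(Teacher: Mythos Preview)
Your proof is correct and follows exactly the approach the paper indicates (``Similarly to the second part of \Cref{prop:sHL_to_iHL}''): direct substitution of the specialization \eqref{eq:ab_spec_in_functions} into the symmetrization formula for $\mathsf{G}_\lambda^*$, tracking each factor's limit. You have spelled out the details the paper omits, including the bookkeeping of the $\xi_0^{\ell(\lambda)}$ factor and the observation that the symmetric factors $\prod_j(1-v_j\tm^{1-N})^{-1}$ and $(1-\tm)^K$ may be pulled outside the symmetrization.
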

From \Cref{lemma:G_lambda_spec} and 
comparing Cauchy identities \eqref{eq:Olshanski_Cauchy} and \eqref{eq:sHL_Cauchy_S5},
we get a symmetrization formula for 
the functions $G_\lambda^{HL}$.
\begin{proposition}
	\label{prop:dual_iHL_symmetrization}
	The symmetrization formula
	for the functions $G_\lambda^{HL}$ dual to the interpolation
	Hall--Littlewood polynomials has the form:
	\begin{equation}
		\label{eq:dual_iHL_symmetrization}
		\begin{split}
			G_\lambda^{HL}(y_1,\ldots,y_K;\tm )
			=\frac{(1-\tm)^{K}}{(\tm;\tm)_{K-\ell(\lambda)}}
			\sum_{\sigma\in S_K}
			\sigma\Biggl(
				\prod_{1\le i<j\le K}
				\frac{y_j-\tm y_i}{y_j-y_i}
				\prod_{j=1}^{K}
				y_j^{-\lambda_j}\,
				\frac{y_j-\tm^{1-\ell(\lambda)}\mathbf{1}_{\lambda_j=0}}
				{y_j-\tm}
			\Biggr).
		\end{split}
	\end{equation}
\end{proposition}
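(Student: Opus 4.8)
The plan is to \emph{specialize the known Cauchy identity} \eqref{eq:sHL_Cauchy_S5} and read off the symmetrization formula for $G_\lambda^{HL}$ by comparison with \eqref{eq:Olshanski_Cauchy}. Concretely, I would take \eqref{eq:sHL_Cauchy_S5} (with $K$ variables $v_j$) and apply the substitution \eqref{eq:ab_spec_in_functions}, namely $q=\tm$, $s_x=0$ and $\xi_x=1$ for $x\ge1$, $s_0\xi_0\to0$, $s_0/\xi_0\to\tm^{1-N}$. On the left-hand side, \Cref{prop:sHL_to_iHL} controls the limit of $\xi_0^{-\ell(\lambda)}\mathsf{F}_\lambda(u_1,\ldots,u_N)$ and \Cref{lemma:G_lambda_spec} controls the limit of $\xi_0^{\ell(\lambda)}\mathsf{G}_\lambda^*(v_1,\ldots,v_K)$; the powers of $\xi_0$ cancel in the product, and the factors $\prod_{r\ge0}(\tm;\tm)_{m_r(\lambda)}$ coming from $\mathsf{F}_\lambda$ cancel exactly the reciprocal factors coming from $\mathsf{G}_\lambda^*$ (when $K<N$, both the vanishing of $\mathsf{G}_\lambda^*$ for $\ell(\lambda)>K$ and the range of summation in \eqref{eq:Olshanski_Cauchy} force $\ell(\lambda)\le\min(N,K)$, so there is no mismatch). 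On the right-hand side, since $s_0\xi_0\to0$, the prefactor $(q;q)_N/\prod_i(1-s_0\xi_0u_i)$ tends to $(\tm;\tm)_N$, which also cancels the $(\tm;\tm)_N$ appearing on the left.

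After these cancellations one is left with an identity of the shape
\[
\sum_{\lambda\in\mathrm{Sign}_N}F_\lambda^{HL}(u_1,\ldots,u_N;\tm)\,C_\lambda(v_1,\ldots,v_K)=\prod_{i=1}^N\prod_{j=1}^K\frac{1-\tm u_iv_j}{1-u_iv_j},
\]
where $C_\lambda=\tfrac{(1-\tm)^K}{(\tm;\tm)_{K-\ell(\lambda)}}\prod_j\tfrac{1}{1-v_j\tm^{1-N}}\,\Sigma_\lambda(v)$, with $\Sigma_\lambda$ the $S_K$-symmetrization appearing in \Cref{lemma:G_lambda_spec}. On the other hand, the substitution $y_j=v_j^{-1}$ turns the product side of \eqref{eq:Olshanski_Cauchy} into $\prod_j\tfrac{1-\tm^{1-N}v_j}{1-\tm v_j}\prod_{i,j}\tfrac{1-\tm u_iv_j}{1-u_iv_j}$. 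Since the interpolation Hall--Littlewood polynomials $\{F_\lambda^{HL}(u_1,\ldots,u_N;\tm)\}_{\lambda\in\mathrm{Sign}_N}$ are linearly independent — they are triangular with leading monomial $u_1^{\lambda_1}\cdots u_N^{\lambda_N}$, so the expansion of the product above in this family has unique coefficients, equivalently $G_\lambda^{HL}$ is characterized by \eqref{eq:Olshanski_Cauchy} (cf. \cite[Section 3.2]{olshanski2019interpolation}) — I conclude that $G_\lambda^{HL}(v_1^{-1},\ldots,v_K^{-1};\tm)=\prod_j\tfrac{1-\tm^{1-N}v_j}{1-\tm v_j}\,C_\lambda(v)$.

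It then remains to simplify. The factor $\prod_j(1-\tm^{1-N}v_j)^{-1}$ inside $C_\lambda$ cancels against $\prod_j(1-\tm^{1-N}v_j)$, leaving $\prod_j(1-\tm v_j)^{-1}$. Passing to the variables $y_j=v_j^{-1}$: the ratio $\tfrac{v_i-\tm v_j}{v_i-v_j}$ becomes $\tfrac{y_j-\tm y_i}{y_j-y_i}$, the monomial $\prod_j v_j^{\lambda_j}$ becomes $\prod_j y_j^{-\lambda_j}$, and $\prod_j(1-\tm v_j)^{-1}=\prod_j\tfrac{y_j}{y_j-\tm}$ combines with $\prod_{i=\ell(\lambda)+1}^K(1-v_i\tm^{1-\ell(\lambda)})=\prod_{i=\ell(\lambda)+1}^K\tfrac{y_i-\tm^{1-\ell(\lambda)}}{y_i}$ — which in the unpermuted arrangement acts precisely on the variables indexed by the zero parts of $\lambda$ — to produce $\prod_j\tfrac{y_j-\tm^{1-\ell(\lambda)}\mathbf{1}_{\lambda_j=0}}{y_j-\tm}$. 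Collecting everything yields exactly \eqref{eq:dual_iHL_symmetrization}.

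The main obstacle is the justification of the term-by-term comparison of the two Cauchy identities, i.e. the uniqueness of the dual family in the completed (infinite-sum) setting: one must argue that expansion in $\{F_\lambda^{HL}\}$ is unique, using the triangularity of the interpolation Hall--Littlewood polynomials together with the convergence guaranteed by $|u_iv_j|<1$ (or simply cite the characterization of $G_\lambda^{HL}$ by \eqref{eq:Olshanski_Cauchy} from \cite{olshanski2019interpolation}). Beyond that, the argument is careful bookkeeping of the $q$-Pochhammer prefactors and the $y\leftrightarrow v^{-1}$ substitution, the one point requiring attention being the emergence of the indicator $\mathbf{1}_{\lambda_j=0}$ when the truncated product $\prod_{i=\ell(\lambda)+1}^{K}$ is symmetrized over $S_K$.
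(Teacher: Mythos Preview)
Your proposal is correct and follows essentially the same route as the paper: specialize the spin Hall--Littlewood Cauchy identity \eqref{eq:sHL_Cauchy_S5} via \Cref{prop:sHL_to_iHL} and \Cref{lemma:G_lambda_spec}, then compare with \eqref{eq:Olshanski_Cauchy} and invoke uniqueness of the expansion in the $F_\lambda^{HL}$'s. The only minor difference is in how uniqueness is justified: the paper appeals to the torus orthogonality \eqref{eq:torus_orthogonality} specialized as in \Cref{prop:sHL_to_iHL}, whereas you argue via triangularity/linear independence of the $F_\lambda^{HL}$ (or cite the characterization from \cite{olshanski2019interpolation}); both are valid, and your version has the advantage of being self-contained without needing to check that the orthogonality survives the degeneration.
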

In particular, this function does not depend on $N$ (where $\lambda\in \mathrm{Sign}_N$). Note that the right-hand side of 
\eqref{eq:dual_iHL_symmetrization}
is indeed a power series in the $y_i^{-1}$'s, because the 
only fraction that needs to be expanded into an 
infinite series is
$
\frac{y_j-\tm^{1-\ell(\lambda)}\mathbf{1}_{\lambda_j=0}}{y_j-\tm}
=
\frac{1-y_j^{-1}\tm^{1-\ell(\lambda)}\mathbf{1}_{\lambda_j=0}}{1-y_j^{-1}\tm}
$, 
since the denominators in 
$\frac{y_j-\tm y_i}{y_j-y_i}$ are cleared by symmetrization.
\begin{proof}[Proof of \Cref{prop:dual_iHL_symmetrization}]
	The spin Hall--Littlewood Cauchy identity
	\eqref{eq:sHL_Cauchy_S5} and \Cref{lemma:G_lambda_spec}
	imply that 
	the candidate functions $G_\lambda^{HL}$
	given by the right-hand side of 
	\eqref{eq:dual_iHL_symmetrization}
	satisfy the Cauchy identity \eqref{eq:Olshanski_Cauchy}
	together with the functions $F_\lambda^{HL}$.
	Orthogonality of the functions $F_\lambda^{HL}$
	(a consequence of \eqref{eq:torus_orthogonality} and the specialization
	in \Cref{prop:sHL_to_iHL})
	implies that identity \eqref{eq:Olshanski_Cauchy} uniquely determines the 
	coefficients by each individual $F_\lambda^{HL}(u_1,\ldots,u_N ;\tm)$.
	This completes the proof.
\end{proof}

\section{Schur expansion and measures on partitions}
\label{sec:expansions_IK}

The aim of this section is to write down an expansion 
of the determinant in the right-hand side of the refined spin Hall--Littlewood
Cauchy identity, 
\begin{equation}
	\label{eq:IK_gamma_det}
	\mathscr{Z}_N^\gamma(u_1,\ldots,u_N\mid v_1,\ldots,v_N ;s_0 )\,
	\frac{\prod_{j=1}^{N}(1-s_0\xi_0u_j)(1-s_0\xi_0^{-1}v_j)}{\prod_{i,j=1}^{N}(1-q u_iv_j)^{-1}}
	=
	\frac{\det\left[
		\mathsf{z}(u_i,v_j;s_0)
	\right]_{i,j=1}^{N}}{\prod_{1\le i<j\le N}(u_i-u_j)(v_i-v_j)},
\end{equation}
where
$\mathsf{z}(u,v;s_0)$ is given by \eqref{eq:IK_det_element},
in terms of the Schur symmetric polynomials.
This expansion is formulated as \Cref{thm:Schur_expansion}
in \Cref{sub:Schur_expansion}.
Then in \Cref{sub:measures_on_partitions} we discuss connections of our formulas to 
expectations of certain observables of 
Schur and Macdonald measures on partitions.

\subsection{Schur expansion}
\label{sub:Schur_expansion}

Since \eqref{eq:IK_gamma_det} is symmetric in $v_1,\ldots,v_N $, 
let us look for its (Taylor series) expansion
into the Schur polynomials
\begin{equation}
	\label{eq:Schur_poly_def}
	s_\lambda(v_1,\ldots,v_N )=\frac{\det[v_i^{\lambda_j+N-j}]_{i,j=1}^{N}}{\prod_{1\le i<j\le N}(v_i-v_j)}.
\end{equation}
This expansion would look as
\begin{equation}
	\label{eq:IK_gamma_Schur}
	\frac{\det\left[
		\mathsf{z}(u_i,v_j;s_0)
	\right]_{i,j=1}^{N}}{\prod_{1\le i<j\le N}(u_i-u_j)(v_i-v_j)}	
	=
	\sum_{\lambda\in \mathrm{Sign}_N}
	\mathsf{C}_\lambda(u_1,\ldots,u_N;s_0 )\,s_\lambda(v_1,\ldots,v_N ),
\end{equation}
where $\mathsf{C}_\lambda$ are some symmetric functions of the $u_i$'s (treated as some yet unknown coefficients).
Here we assume that the infinite series in 
\eqref{eq:IK_gamma_Schur} converges. After the computation of the $\mathsf{C}_\lambda$'s,
we will see that the series indeed converges for $|u_iv_j|<1$ for all $i,j$,
and this would justify the computation of the $\mathsf{C}_\lambda$'s.

We will employ the well-known torus orthogonality relation
for the Schur polynomials:
\begin{proposition}
	\label{lemma:Schur_orthogonality}
	We have for any $\lambda,\mu\in \mathrm{Sign}_N$:
	\begin{equation*}
		\frac{1}{N!(2\pi \mathbf{i})^N}
		\oint \frac{dv_1}{v_1}
		\ldots
		\oint \frac{dv_N}{v_N}
		\prod_{1\le i< j\le N}(v_i-v_j)(v_i^{-1}-v_j^{-1})\,
		s_\lambda(v_1,\ldots,v_N )\,s_\mu(v_1^{-1},\ldots,v_N^{-1} )
		=
		\mathbf{1}_{\lambda=\mu},
	\end{equation*}
	where each integration is over the positively oriented unit circle.
\end{proposition}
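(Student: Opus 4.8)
The plan is to reduce the statement to the classical orthogonality of bialternants, which comes out of term-by-term integration of monomials over the torus.

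First I would clear the Vandermonde denominators using the bialternant form \eqref{eq:Schur_poly_def}. Namely,
\[
	\prod_{1\le i<j\le N}(v_i-v_j)\,s_\lambda(v_1,\ldots,v_N)
	=\det\bigl[v_i^{\lambda_j+N-j}\bigr]_{i,j=1}^{N}
	=\sum_{\sigma\in S_N}\mathrm{sgn}(\sigma)\prod_{i=1}^{N}v_i^{\lambda_{\sigma(i)}+N-\sigma(i)},
\]
and applying the same identity with $v_i$ replaced by $v_i^{-1}$,
\[
	\prod_{1\le i<j\le N}(v_i^{-1}-v_j^{-1})\,s_\mu(v_1^{-1},\ldots,v_N^{-1})
	=\sum_{\tau\in S_N}\mathrm{sgn}(\tau)\prod_{i=1}^{N}v_i^{-\mu_{\tau(i)}-N+\tau(i)}.
\]
The integrand in the statement is exactly the product of the left-hand sides, hence it equals
\[
	\sum_{\sigma,\tau\in S_N}\mathrm{sgn}(\sigma)\,\mathrm{sgn}(\tau)\,
	\prod_{i=1}^{N}v_i^{\lambda_{\sigma(i)}+N-\sigma(i)-\mu_{\tau(i)}-N+\tau(i)}.
\]

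Next I would integrate term by term, using $\frac{1}{2\pi\mathbf{i}}\oint v^{k}\,\frac{dv}{v}=\mathbf{1}_{k=0}$ on the unit circle. This annihilates every summand except those $(\sigma,\tau)$ with $\lambda_{\sigma(i)}+N-\sigma(i)=\mu_{\tau(i)}+N-\tau(i)$ for all $i=1,\ldots,N$. The key observation is that since $\lambda$ and $\mu$ are weakly decreasing signatures, the maps $k\mapsto\lambda_k+N-k$ and $k\mapsto\mu_k+N-k$ are \emph{strictly} decreasing, hence injective. Therefore the surviving constraint forces the $N$-element sets $\{\lambda_k+N-k\}_{k=1}^{N}$ and $\{\mu_k+N-k\}_{k=1}^{N}$ to coincide; comparing their (unique) strictly decreasing enumerations gives $\lambda_k=\mu_k$ for all $k$, i.e.\ $\lambda=\mu$. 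So the integral vanishes whenever $\lambda\ne\mu$, matching $\mathbf{1}_{\lambda=\mu}=0$.

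When $\lambda=\mu$, the same injectivity turns $\lambda_{\sigma(i)}+N-\sigma(i)=\lambda_{\tau(i)}+N-\tau(i)$ into $\sigma(i)=\tau(i)$ for all $i$, i.e.\ $\sigma=\tau$; each such term contributes $\mathrm{sgn}(\sigma)^2=1$, and there are $N!$ of them, cancelling the prefactor $\frac{1}{N!}$ and giving the value $1$. I expect no genuine obstacle here: the only step requiring care is the monotonicity/injectivity argument that singles out which pairs $(\sigma,\tau)$ survive the integration, so I would keep the write-up short. (Alternatively one could cite the Weyl integration formula for $U(N)$ or the known orthonormality of Schur polynomials, but the direct computation above is self-contained.)
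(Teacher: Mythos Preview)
Your argument is correct and is precisely the approach sketched in the paper: expand both Schur bialternants as sums over $S_N$ and integrate term by term using $\frac{1}{2\pi\mathbf{i}}\oint v^{n-1}\,dv=\mathbf{1}_{n=0}$. Your write-up is in fact more detailed than the paper's, which only gives this as an ``idea of proof''; the injectivity/monotonicity observation you spell out is exactly the missing detail.
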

\begin{proof}[Idea of proof]
	While the claim
	is a degeneration of \eqref{eq:torus_orthogonality} as $s_x\equiv 0$, $\xi_x\equiv 1$, $q=0$, 
	it is not hard to prove it independently.
	Namely, 
	expand both determinants in the definition of the Schur polynomials 
	\eqref{eq:Schur_poly_def} as sums of $N!$ terms, and use the orthogonality
	$\frac{1}{2\pi\mathbf{i}}\oint v^{n-1}dv=\mathbf{1}_{n=0}$ for each single variable.
\end{proof}

We have using \Cref{lemma:Schur_orthogonality}:
\begin{equation}
	\label{eq:Schur_main_computation}
	\begin{split}
		&
		\mathsf{C}_\lambda(u_1,\ldots,u_N;s_0 )
		\\&
		\hspace{10pt}
		=
		\frac{1}{N!(2\pi \mathbf{i})^N}
		\oint \frac{dv_1}{v_1}
		\ldots
		\oint \frac{dv_N}{v_N} \,
		\prod_{1\le i< j\le N}(v_i-v_j)(v_i^{-1}-v_j^{-1})
		\\&\hspace{160pt}\times
		\frac{\det\left[
			\mathsf{z}(u_i,v_j;s_0)
		\right]_{i,j=1}^{N}}{\prod_{1\le i<j\le N}(u_i-u_j)(v_i-v_j)}
		\,
		s_\lambda(v_1^{-1},\ldots,v_N^{-1} )
		\\&
		\hspace{10pt}
		=
		\frac{1}{\prod_{i<j}(u_i-u_j)}
		\frac{1}{N!(2\pi \mathbf{i})^N}
		\oint dv_1
		\ldots
		\oint dv_N
		\det\bigl[ v_i^{-(\lambda_j+N-j)-1} \bigr]_{i,j=1}^{N}
		\det\left[
			\mathsf{z}(u_i,v_j;s_0)
		\right]_{i,j=1}^{N}
		\\&
		\hspace{10pt}
		=
		\frac{1}{\prod_{i<j}(u_i-u_j)}
		\det
		\left[ 
			\frac{1}{2\pi\mathbf{i}}
			\oint
			\frac{\mathsf{z}(u_i,v;s_0)\,dv}{v^{\lambda_j+N-j+1}}
		\right]_{i,j=1}^{N}.
	\end{split}
\end{equation}
In the last equality 
we have expanded both determinants as sums of $N!$ terms, and wrote the multiple integration
as a product of single integrations. This allows to cancel $N!$ from the denominator, and 
write the result as a determinant of single integrals.

Next, one readily sees that for any $k\ge0$ we have
\begin{equation*}
	\frac{1}{2\pi\mathbf{i}}
	\oint
	\frac{\mathsf{z}(u,v;s_0)\,dv}{v^{k+1}}
	=
	u^k
	\left\{ 
		1-\gamma q^{k+1}
		+
		s_0^2\gamma(\gamma-q^k)
		-
		s_0\gamma\left(
			(1-q^k)(u \xi_0)^{-1}+
			(1-q^{k+1})u\xi_0
		\right)
	\right\}
	,
\end{equation*}
since the integration contour contains the single pole $v=0$.
This leads to the first expression for 
the coefficients $\mathsf{C}_\lambda(u_1,\ldots,u_N ;s_0)$ appearing in the expansion \eqref{eq:IK_gamma_Schur}:
\begin{equation}
	\label{eq:C_lambda_det_function}
	\mathsf{C}_\lambda(u_1,\ldots,u_N;s_0 )=
	\frac{\det[\mathsf{c}_{\lambda_i+N-i}(u_j;s_0)]_{i,j=1}^{N}}{\prod_{1\le i<j\le N}(u_i-u_j)},
\end{equation}
where
\begin{equation}
	\label{eq:small_c_function}
	\mathsf{c}_k(u;s_0)=
	u^k
	\left\{ 
		1-\gamma q^{k+1}
		+
		s_0^2\gamma(\gamma-q^k)
		-
		s_0\gamma\left(
			(1-q^k)(u \xi_0)^{-1}+
			(1-q^{k+1})u\xi_0
		\right)
	\right\}.
\end{equation}
In particular, we see that 
$\mathsf{C}_\lambda(u_1,\ldots,u_N ;s_0 )$ is a symmetric polynomial in $u_1,\ldots,u_N $.
For $s_0=\gamma=0$ we have $\mathsf{C}_\lambda=s_\lambda$.

Let us get another expression for $\mathsf{C}_\lambda$ using the 
other determinantal expression for \eqref{eq:IK_gamma_det}
following from \Cref{thm:Cuenca_style_formula}.
Using orthogonality (\Cref{lemma:Schur_orthogonality})
in the same way as 
in the computation \eqref{eq:Schur_main_computation}, we have
\begin{equation}
	\label{eq:C_lambda_Jacobi_Trudi_type}
	\begin{split}
		&\mathsf{C}_\lambda(u_1,\ldots,u_N;s_0 )
		=
		\det
		\Biggl[ 
			\frac{1}{2\pi \mathbf{i}}
			\oint
			\frac{
			\xi_0^{2i-N}\, dv}{v^{\lambda_j+i-j+2}}
			\\&\hspace{40pt}\times
			\Biggl\{ 
				\frac{\left( 1-\gamma s_0\xi_0^{-1}v \right)
				\left( v\xi_0^{-1}-\gamma s_0 \right)}
				{\prod_{l=1}^{N}(1-v u_l)}
				-
				\gamma q^{N-i}\,
				\frac{\left( 1-s_0\xi_0^{-1}v \right)
				\left( q v \xi_{0}^{-1}-s_0 \right)}{
				\prod_{l=1}^{N}(1-qv u_l)}
			\Biggr\}
		\Biggr]_{i,j=1}^N,
	\end{split}
\end{equation}
where the integration contour is around $0$ and encircles no other poles.
The result of the integration in \eqref{eq:C_lambda_Jacobi_Trudi_type} 
can be expressed in terms of the complete homogeneous
symmetric polynomials
\begin{equation*}
	h_k=h_k(u_1,\ldots,u_N )=\frac{1}{2\pi \mathbf{i}}\oint 
	\frac{dv}{v^{k+1}}\frac{1}{\prod_{l=1}^{N}(1-vu_l)}
	=\sum_{1\le i_1\le \ldots\le i_k\le N }u_{i_1}\ldots u_{i_k} .
\end{equation*}
In this way we obtain:
\begin{equation}
	\label{eq:C_lambda_H}
	\begin{split}
		&\mathsf{C}_\lambda(u_1,\ldots,u_N ;s_0)=
		\det\Bigl[
			-\gamma s_0\xi_0^{-1}(1-q^{\lambda_j+N-j})h_{\lambda_j+i-j-1}
			\\&\hspace{30pt}
			+
			\left( 1+\gamma^2s_0^2-\gamma(q+s_0^2)q^{\lambda_j+N-j} \right)
			h_{\lambda_j+i-j}
			-\gamma s_0 \xi_0(1-q^{\lambda_j+N-j+1})
			h_{\lambda_j+i-j+1}
		\Bigr]_{i,j=1}^N,
	\end{split}
\end{equation}
where $h_k=h_k(u_1,\ldots,u_N )$ are the complete homogeneous symmetric polynomials.

\begin{remark}
	\label{rmk:det_det_vs_JT}
	Comparing formulas \eqref{eq:C_lambda_det_function} and
	\eqref{eq:C_lambda_H} for the functions $\mathsf{C}_\lambda$, we note 
	that the former resembles the definition of the Schur polynomial \eqref{eq:Schur_poly_def}.
	There is a Schur level analogue of \eqref{eq:C_lambda_H}, namely,
	the Jacobi--Trudi formula
	$s_\lambda(u_1,\ldots,u_N )=
	\det[h_{\lambda_i+j-i}(u_1,\ldots,u_N )]_{i,j=1}^{N}$ 
	which is obtained from \eqref{eq:C_lambda_H} by setting $s_0=\gamma=0$.
\end{remark}

The computations above in this subsection
lead to the following result.

\begin{theorem}[Schur expansion]
	\label{thm:Schur_expansion}
	If $|u_iv_j|<1$ for all $i,j$, then we have
	\begin{equation*}
		\frac{\det\left[
			\mathsf{z}(u_i,v_j;s_0)
		\right]_{i,j=1}^{N}}{\prod_{1\le i<j\le N}(u_i-u_j)(v_i-v_j)}
		=
		\sum_{\lambda\in \mathrm{Sign}_N}
		\mathsf{C}_\lambda(u_1,\ldots,u_N;s_0)
		\,s_\lambda(v_1,\ldots,v_N ).
	\end{equation*}
	Here $s_\lambda$ are the Schur
	symmetric polynomials, and 
	$\mathsf{C}_\lambda$ are symmetric polynomials which are
	deformations of the $s_\lambda$'s depending on four additional parameters
	$q,s_0,\xi_0,\gamma$.
	The polynomials $\mathsf{C}_\lambda$ are given by a
	double alternant formula \eqref{eq:C_lambda_det_function}--\eqref{eq:small_c_function}
	or by a Jacobi--Trudi type formula \eqref{eq:C_lambda_H}.
\end{theorem}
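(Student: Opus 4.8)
The plan is to make rigorous the formal computation already displayed in \eqref{eq:Schur_main_computation}--\eqref{eq:C_lambda_H}; the only genuinely analytic ingredient is pinning down the domain on which the Schur series converges, after which everything reduces to residue calculus at $v=0$. First I would observe that $\mathsf{z}(u,v;s_0)$, as given by \eqref{eq:IK_det_element}, is a rational function of $v$ whose only poles are at $v=u^{-1}$ and $v=(qu)^{-1}$. Since $q\in(0,1)$, the hypothesis $|u_iv_j|<1$ for all $i,j$ — equivalently $\max_j|v_j|<(\max_i|u_i|)^{-1}$ — places the $v_j$ inside the open polydisc $\mathcal{D}=\{(v_1,\dots,v_N)\colon \max_i|u_i|\cdot|v_j|<1\text{ for all }j\}$, on which $\det[\mathsf{z}(u_i,v_j;s_0)]_{i,j=1}^N$ is holomorphic. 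This determinant is moreover antisymmetric in $v_1,\dots,v_N$ (interchanging $v_i\leftrightarrow v_j$ flips its sign, just as the Vandermonde $\prod_{i<j}(v_i-v_j)$ does — a special case of Property~\textbf{1} in \Cref{Z_properties}), so the left-hand side of the asserted identity is holomorphic and \emph{symmetric} on $\mathcal{D}$ and therefore expands there, absolutely and uniformly on compact subsets, as $\sum_{\lambda\in\mathrm{Sign}_N}\mathsf{C}_\lambda(u_1,\dots,u_N;s_0)\,s_\lambda(v_1,\dots,v_N)$ with coefficients $\mathsf{C}_\lambda$ rational in the $u_i$ (the $s_\lambda$ form a basis of the symmetric polynomials, triangularly related to the monomial basis).

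Next I would extract $\mathsf{C}_\lambda$ via the Schur torus orthogonality of \Cref{lemma:Schur_orthogonality}, integrating against $s_\lambda(v_1^{-1},\dots,v_N^{-1})$ over a torus of radius $\rho<(\max_i|u_i|)^{-1}$. The orthogonality relation is radius-independent — rescaling $v_j\mapsto\rho v_j$ reduces it to the unit-circle case, the Vandermonde factors contributing reciprocal powers of $\rho$ that cancel — and term-by-term integration is legitimate by uniform convergence on the contour. Expanding both $\det[v_i^{-(\lambda_j+N-j)-1}]$ and $\det[\mathsf{z}(u_i,v_j;s_0)]$ into $N!$ summands and pairing single integrations exactly as in \eqref{eq:Schur_main_computation} gives $\mathsf{C}_\lambda=\frac{1}{\prod_{i<j}(u_i-u_j)}\det\bigl[\tfrac{1}{2\pi\mathbf{i}}\oint \mathsf{z}(u_i,v;s_0)\,v^{-\lambda_j-N+j-1}\,dv\bigr]_{i,j=1}^N$, the contour enclosing only $v=0$.

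Since $\mathsf{z}(u,\cdot;s_0)$ is holomorphic inside that contour except at $v=0$, the single integral $\tfrac{1}{2\pi\mathbf{i}}\oint \mathsf{z}(u,v;s_0)\,v^{-k-1}\,dv$ is just the Taylor coefficient of $v^k$ of $\mathsf{z}(u,v;s_0)$, and a direct expansion of \eqref{eq:IK_det_element} yields the three-term expression \eqref{eq:small_c_function} for $\mathsf{c}_k(u;s_0)$, producing the double alternant \eqref{eq:C_lambda_det_function}. For the Jacobi--Trudi type formula \eqref{eq:C_lambda_Jacobi_Trudi_type}--\eqref{eq:C_lambda_H} I would repeat the same two steps, but starting from the alternative determinantal expression for \eqref{eq:IK_gamma_det} supplied by \Cref{thm:Cuenca_style_formula}: the resulting single integrals have the form $\tfrac{1}{2\pi\mathbf{i}}\oint(\text{at most quadratic in }v)\,\prod_l(1-vu_l)^{-1}v^{-m}\,dv$ (the companion factors $\prod_l(1-qvu_l)^{-1}$ are likewise holomorphic at $0$, as $|qu_lv|<1$ on the contour), whose residue at $v=0$ is a three-term combination of the complete homogeneous $h_k(u_1,\dots,u_N)$. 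Finally I would note that $\mathsf{C}_\lambda$ is actually a symmetric \emph{polynomial}: the apparent $u^{-1}$ term of $\mathsf{c}_k$ has coefficient proportional to $1-q^k$ and so vanishes for $k=0$; since the smallest index in any column of the alternant in \eqref{eq:C_lambda_det_function} is $\lambda_N+N-N=\lambda_N\ge0$, every entry is an honest polynomial, the determinant is antisymmetric, hence divisible by $\prod_{i<j}(u_i-u_j)$, and the quotient is a symmetric polynomial.

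The only point requiring care — the main obstacle — is the analyticity bookkeeping of the first step: verifying that $\mathcal{D}$ is precisely the polydisc on which both the Schur series converges and the orthogonality contour can be placed, so that the formal manipulations of \eqref{eq:Schur_main_computation} are justified. In contrast to \Cref{lemma:u0_det_computation} and \Cref{lemma:M_det_4}, the determinant evaluations involved here are entirely elementary.
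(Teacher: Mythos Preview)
Your proposal is correct and follows essentially the same route as the paper: extract $\mathsf{C}_\lambda$ by Schur orthogonality, collapse the double determinant into a determinant of single contour integrals, compute those as Taylor coefficients of $\mathsf{z}(u,\cdot;s_0)$, and repeat with the alternative determinant of \Cref{thm:Cuenca_style_formula} for the Jacobi--Trudi form. The paper itself is terser on the analytic points---it simply assumes convergence up front and remarks afterward that the explicit form of $\mathsf{C}_\lambda$ confirms it---whereas you supply the bookkeeping (holomorphy on $\mathcal{D}$, radius-independence of the orthogonality torus, polynomiality via the vanishing of the $u^{-1}$ coefficient at $k=0$) that makes the argument airtight.
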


Combining this with the refined Cauchy identity (\Cref{thm:intro_sHL_refined}), 
we get:

\begin{corollary}
	\label{cor:refined_sHL_and_Schur}
	Let $u_i,v_j$ satisfy $|u_iv_j|<1$ and condition \eqref{eq:admissible_u_v}
	for all $i,j$. Then we have
	\begin{equation*}
		\begin{split}
			&\prod_{i,j=1}^{N}\frac{1}{1-qu_iv_j}
			\sum_{\lambda\in \mathrm{Sign}_N}
			\frac{(\gamma q;q)_{m_0(\lambda)}(\gamma^{-1}s_0^2;q)_{m_0(\lambda)}}{(q;q)_{m_0(\lambda)}(s_0^2;q)_{m_0(\lambda)}}
			\,
			\mathsf{F}_{\lambda}(u_1,\ldots,u_N )\,
			\mathsf{F}^*_{\lambda}(v_1,\ldots,v_N )
			\\&\hspace{50pt}=
			\prod_{j=1}^{N}\frac{1}{(1-s_0\xi_0u_j)(1-s_0\xi_0^{-1}v_j)}
			\sum_{\lambda\in \mathrm{Sign}_N}
			\mathsf{C}_\lambda(u_1,\ldots,u_N;\gamma^{-1}s_0)
			\,s_\lambda(v_1,\ldots,v_N ),
		\end{split}
	\end{equation*}
	where $\mathsf{C}_\lambda$ is given by \eqref{eq:C_lambda_det_function}
	or \eqref{eq:C_lambda_H}.
\end{corollary}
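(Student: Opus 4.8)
The plan is to combine the refined Cauchy identity of \Cref{thm:intro_sHL_refined} with the Schur expansion of \Cref{thm:Schur_expansion} purely at the level of bookkeeping: once the normalizations are lined up, there is essentially nothing left to do. So the only real content is matching the two determinantal forms and checking that the domains of validity overlap.

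First I would observe that the determinant appearing in the right-hand side of \eqref{eq:intro_sHL_refined} is exactly $\det\left[\mathsf{z}(u_i,v_j;\gamma^{-1}s_0)\right]_{i,j=1}^{N}$ with $\mathsf{z}$ as in \eqref{eq:IK_det_element}. Indeed, substituting $s_0\mapsto \gamma^{-1}s_0$ into $\mathsf{z}(u,v;s_0)$ turns the factors $q-\gamma s_0^2$, $1-\gamma\xi_0 s_0 u$, $1-\gamma\xi_0^{-1}s_0 v$ into $q-\gamma^{-1}s_0^2$, $1-\xi_0 s_0 u$, $1-\xi_0^{-1}s_0 v$ respectively, which are precisely the factors in the numerator of the determinant entry in \eqref{eq:intro_sHL_refined}, while the denominator $(1-uv)(1-quv)$ is unchanged. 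Here I use $\gamma\ne0$, so this substitution is legitimate.

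Next I would apply \Cref{thm:Schur_expansion} with $s_0$ replaced by $\gamma^{-1}s_0$ (again using $\gamma\ne0$), which gives, for $|u_iv_j|<1$ for all $i,j$,
\begin{equation*}
	\frac{\det\left[\mathsf{z}(u_i,v_j;\gamma^{-1}s_0)\right]_{i,j=1}^{N}}{\prod_{1\le i<j\le N}(u_i-u_j)(v_i-v_j)}
	=\sum_{\lambda\in\mathrm{Sign}_N}\mathsf{C}_\lambda(u_1,\ldots,u_N;\gamma^{-1}s_0)\,s_\lambda(v_1,\ldots,v_N),
\end{equation*}
with $\mathsf{C}_\lambda$ as in \eqref{eq:C_lambda_det_function} or \eqref{eq:C_lambda_Jacobi_Trudi_type}. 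Substituting this into the right-hand side of \eqref{eq:intro_sHL_refined}, the prefactor $\prod_{i,j=1}^{N}(1-qu_iv_j)\big/\prod_{1\le i<j\le N}(u_i-u_j)(v_i-v_j)$ combines with the determinant to produce $\prod_{i,j=1}^{N}(1-qu_iv_j)$ times the Schur series above, while the factor $\prod_{j=1}^{N}\big((1-s_0\xi_0u_j)(1-\xi_0^{-1}s_0v_j)\big)^{-1}$ is untouched. Dividing both sides of the resulting identity by $\prod_{i,j=1}^{N}(1-qu_iv_j)$, and recalling $m_0(\lambda)=N-\ell(\lambda)$ so the refinement coefficients are unchanged, yields exactly the claimed formula.

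The only point requiring a word of care — and the closest thing to an obstacle — is the overlap of the convergence regions: \Cref{thm:intro_sHL_refined} assumes \eqref{eq:admissible_u_v} so that the left-hand series converges, whereas \Cref{thm:Schur_expansion} assumes $|u_iv_j|<1$ so that the Schur series on the right converges. The hypothesis of the corollary imposes both conditions simultaneously, so every series in sight is absolutely convergent and all the above manipulations (in particular, substituting one convergent expansion into another and regrouping) are justified. No further estimates are needed.
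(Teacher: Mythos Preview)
Your proof is correct and matches the paper's approach exactly: the paper simply states the corollary follows by ``combining this with the refined Cauchy identity (\Cref{thm:intro_sHL_refined})'' without further elaboration, and your argument spells out precisely that combination, including the parameter shift $s_0\mapsto\gamma^{-1}s_0$ and the overlap of convergence regions.
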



\subsection{Connection to measures on partitions for $s_0=0$}
\label{sub:measures_on_partitions}

Formulas from the previous \Cref{sub:Schur_expansion}
become much simpler if we set $s_0=0$.
The resulting identities are related to expectations of certain observables 
with respect to probability measures on partitions.

We start by noting that
\begin{equation}
	\label{eq:C_lambda_s0}
	\mathsf{C}_\lambda(u_1,\ldots,u_N; 0)=
	\prod_{j=1}^{N}(1-(\gamma q)q^{\lambda_j+N-j})\,
	s_\lambda(u_1,\ldots,u_N ),
\end{equation}
this follows by comparing the $s_0=0$ case of \eqref{eq:C_lambda_det_function}
with \eqref{eq:Schur_poly_def}.

Using stable spin Hall--Littlewood functions $\widetilde{\mathsf{F}}_\lambda,
\widetilde{\mathsf{F}}_\lambda^*$ (see \eqref{eq:stable_f_2}, \eqref{eq:dual_stable_f_2}),
\Cref{cor:refined_sHL_and_Schur} specializes at $s_0=0$
as follows:
\begin{equation}
	\label{eq:Cor53_s_0}
		\begin{split}
			&\prod_{i,j=1}^{N}\frac{1}{1-qu_iv_j}
			\sum_{\lambda\in \mathrm{Sign}_N}
			(\gamma q;q)_{m_0(\lambda)}
			\,
			\widetilde{\mathsf{F}}_{\lambda}(u_1,\ldots,u_N )\,
			\widetilde{\mathsf{F}}^*_{\lambda}(v_1,\ldots,v_N )
			\\&\hspace{50pt}=
			\sum_{\lambda\in \mathrm{Sign}_N}
			\prod_{j=1}^{N}
			(1-(\gamma q)q^{\lambda_j+N-j})
			\,s_\lambda(u_1,\ldots,u_N)
			\,s_\lambda(v_1,\ldots,v_N ),
		\end{split}
\end{equation}
\begin{remark}
	\label{rmk:Cor53_s0_det}
	Using the refined Cauchy identity (\Cref{thm:intro_sHL_refined}) and \Cref{thm:Cuenca_style_formula}, we note that 
	\eqref{eq:Cor53_s_0}
	is also equal to either of the following expressions:
	\begin{equation}
		\label{eq:det_s_0}
		\begin{split}
			&\frac{1}{\prod_{1\le i<j\le N}(u_i-u_j)(v_i-v_j)}\,
			\det\left[ 
				\frac{1-q+q(1-\gamma)(1-u_iv_j)}{(1-u_iv_j)(1-q u_iv_j)}
			\right]_{i,j=1}^{N}
			\\&\hspace{40pt}=
			\frac{1}{\prod_{1\le i<j\le N}(u_i-u_j)}\,
			\det
			\left[ 
				u_j^{N-i}
				\left\{ 
					\frac{1}{\prod_{l=1}^{N}(1-u_jv_l)}
					-
					\frac{\gamma q^{N-i+1}}{\prod_{l=1}^{N}(1-qu_jv_l)}
				\right\}
			\right]_{i,j=1}^N
			.
		\end{split}
	\end{equation}
	However, for the purposes of the discussion in this subsection we 
	focus only on the identity between the two infinite sums in \eqref{eq:Cor53_s_0}.
\end{remark}

The right-hand side of \eqref{eq:Cor53_s_0} is a 
specialization of a more general summation identity involving 
Macdonald symmetric polynomials.
This identity may be interpreted as an expectation with respect to a Macdonald measure.

\begin{definition}[\cite{fulman1997probabilistic}, \cite{ForresterRains2005Macdonald}, \cite{BorodinCorwin2011Macdonald}]
	\label{def:MM}
	The \emph{Macdonald measure} with parameters 
	$\qm,\tm\in [0,1)$ and variables
	$u_1,\ldots,u_N ,v_1,\ldots,v_N>0$, such that $|u_iv_j|<1$,
	is a probability measure on $\mathrm{Sign}_N$
	with probability weights given by 
	\begin{equation*}
		\mathrm{Prob}_{\mathbf{MM}(\qm,\tm)}(\lambda)=
		\prod_{i,j=1}^{N}\frac{(u_iv_j;\qm)_{\infty}}{(\tm u_iv_j;\qm)_{\infty}}\,
		P_\lambda(u_1,\ldots,u_N;\qm,\tm )
		Q_\lambda(v_1,\ldots,v_N;\qm,\tm ),
	\end{equation*}
	where $P_\lambda,Q_\lambda$ are the Macdonald symmetric polynomials (see \Cref{sub:homog_Macd_polys}
	and references therein).
	By $\mathbb{E}_{\mathbf{MM}(\qm,\tm)}$ we denote expectations with respect to this 
	Macdonald measure, where $\lambda\in \mathrm{Sign}_N$ is viewed as the corresponding random signature.
\end{definition}
Recall the following particular cases of Macdonald polynomials
which are important for the present discussion:
\begin{itemize}
	\item When $\qm=\tm$, we have $P_\lambda=Q_\lambda=s_\lambda$, the Schur polynomials \eqref{eq:Schur_poly_def};
		notably, they do not depend on the value of the parameter $\qm=\tm$);
	\item When $\qm=0$, the Macdonald polynomials become the Hall--Littlewood symmetric polynomials
		$P_\lambda^{HL}$ and $Q_\lambda^{HL}$, see \Cref{sub:HL_deg_of_interpolation}.
\end{itemize}

\begin{definition}
	\label{def:sHL_M}
	Using the Cauchy identity for the stable spin Hall--Littlewood functions (\Cref{prop:stable_Cauchy}),
	we may define the measure on partitions
	\begin{equation*}
		\mathrm{Prob}_{\mathbf{sHL}(q)}(\lambda):=\prod_{i,j=1}^{N}\frac{1-u_iv_j}{1-qu_iv_j}
		\,
		\widetilde{\mathsf{F}}_\lambda(u_1,\ldots,u_N )\,
		\widetilde{\mathsf{F}}_\lambda^*(v_1,\ldots,v_N ).
	\end{equation*}
	This measure was discussed in
	\cite{BufetovMucciconiPetrov2018} in connection with stochastic
	vertex models and one-dimensional interacting particle systems. The probability weights
	$\mathrm{Prob}_{\mathbf{sHL}(q)}(\lambda)$
	are nonnegative when $q\in[0,1)$, $s_x\in(-1,0]$, $\xi_x u_i,v_j/\xi_x \in[0,1)$ for all $x,i,j$.
	We denote expectations with respect to this measure by $\mathbb{E}_{\mathbf{sHL}(q)}$.
\end{definition}

With this notation, identity \eqref{eq:Cor53_s_0} (which is the $s_0=0$ degeneration of \Cref{cor:refined_sHL_and_Schur}),
multiplied by $\prod_{i,j=1}^{N}(1-u_iv_j)$ and with a changed parameter $\zeta=-\gamma q$,
is equivalent to
an identity of expectations:
\begin{equation}
	\label{eq:Cor53_s_0_exp}
	\mathbb{E}_{\mathbf{sHL}(q)}(-\zeta;q)_{m_0(\lambda)}
	=
	\mathbb{E}_{\mathbf{MM}(q,q)}\prod_{j=1}^{N}\left( 1+\zeta q^{\lambda_j+N-j} \right).
\end{equation}

At the same time, the right-hand side of \eqref{eq:Cor53_s_0_exp}
extends to the full Macdonald level:
\begin{proposition}[$\qm$-independence in Macdonald measure]
	\label{prop:Macdonald_q_independence}
	The expectation 
	\begin{equation}
		\label{eq:MM_expectation}
		\mathbb{E}_{\mathbf{MM}(\qm,\tm)}\prod_{j=1}^{N}(1+\zeta \qm^{\lambda_j}\tm^{N-j})
	\end{equation}
	is independent of the parameter $\qm$, and is equal to any of the expressions
	\eqref{eq:Cor53_s_0}, \eqref{eq:det_s_0}, or \eqref{eq:Cor53_s_0_exp},
	with the spin Hall--Littlewood parameter $q$ replaced by 
	$\tm$ in each of them.
\end{proposition}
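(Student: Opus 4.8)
The plan is to unfold the expectation \eqref{eq:MM_expectation} against the Macdonald measure of \Cref{def:MM} and reduce the claim to the known refined Cauchy identity for Macdonald polynomials. Writing out the measure,
\[
	\mathbb{E}_{\mathbf{MM}(\qm,\tm)}\prod_{j=1}^{N}(1+\zeta\qm^{\lambda_j}\tm^{N-j})
	=
	\prod_{i,j=1}^{N}\frac{(u_iv_j;\qm)_\infty}{(\tm u_iv_j;\qm)_\infty}
	\sum_{\lambda\in\mathrm{Sign}_N}\prod_{j=1}^{N}(1+\zeta\qm^{\lambda_j}\tm^{N-j})\,P_\lambda(u;\qm,\tm)\,Q_\lambda(v;\qm,\tm),
\]
so the whole content is to evaluate the $\lambda$-sum. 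This sum is exactly the $N$-variable case of Warnaar's refined Cauchy identity for Macdonald polynomials \cite{warnaar2008bisymmetric}, with refinement factor $\prod_j(1+\zeta\qm^{\lambda_j}\tm^{N-j})$, our $\zeta$ being generic and matching $-\gamma q$ once $q$ is renamed $\tm$.

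The structural feature I would invoke is that the right-hand side of Warnaar's identity factors as the Macdonald Cauchy kernel $\prod_{i,j}\frac{(\tm u_iv_j;\qm)_\infty}{(u_iv_j;\qm)_\infty}$ times an explicit rational (Izergin--Korepin type determinantal) expression in the variables $u_i,v_j$ and the parameters $\tm,\zeta$ \emph{only} --- with no $\qm$-dependence at all. Dividing by the Cauchy kernel, the two infinite products cancel and \eqref{eq:MM_expectation} equals that $\qm$-free expression, which is precisely the asserted $\qm$-independence. As a sanity check this can be verified directly for $N=1$: there $P_\lambda(u)=u^\lambda$, $Q_\lambda(v)=\tfrac{(\tm;\qm)_\lambda}{(\qm;\qm)_\lambda}v^\lambda$, and the $q$-binomial theorem yields $\mathbb{E}=1+\zeta\tfrac{1-uv}{1-\tm uv}$, manifestly free of $\qm$.

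It then remains to identify the common value with the expressions listed in the statement, and for this I would set $\qm=\tm$: there $P_\lambda=Q_\lambda=s_\lambda$ and $\prod_j(1+\zeta\qm^{\lambda_j}\tm^{N-j})=\prod_j(1+\zeta\tm^{\lambda_j+N-j})$, so \eqref{eq:MM_expectation} becomes exactly the right-hand side of \eqref{eq:Cor53_s_0_exp} after $q\to\tm$. But \eqref{eq:Cor53_s_0}, \eqref{eq:det_s_0} and \eqref{eq:Cor53_s_0_exp} are all established above as specializations of \Cref{thm:intro_sHL_refined} and \Cref{thm:Cuenca_style_formula} at $s_0=0$ (together with \eqref{eq:C_lambda_s0}), hence the $\qm=\tm$ value is the one claimed; by the $\qm$-independence just proved, the same value holds for every $\qm$.

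The main obstacle is the bookkeeping needed to quote Warnaar's identity in exactly the required normalization: matching his refinement parameter to $\prod_j(1+\zeta\qm^{\lambda_j}\tm^{N-j})$, checking that both variable sets have the same cardinality $N$ and that the series converges for $|u_iv_j|<1$, and --- most importantly --- confirming that the non-Cauchy part of his right-hand side is genuinely independent of $\qm$, since this is what powers the proposition and deserves to be stated explicitly. If a self-contained route is preferred, one can instead expand $\prod_j(1+\zeta\qm^{\lambda_j}\tm^{N-j})=\sum_{k=0}^{N}\zeta^k e_k(\qm^{\lambda_1}\tm^{N-1},\dots,\qm^{\lambda_N}\tm^{0})$ and compute each $\mathbb{E}_{\mathbf{MM}(\qm,\tm)}e_k(\qm^{\lambda_1}\tm^{N-1},\dots,\qm^{\lambda_N})$ via the Macdonald Pieri rule for vertical strips together with the Cauchy identity \eqref{eq:Macdonald_Cauchy}; this is the computationally delicate part, but it simultaneously reproves the $\qm$-independence and, through the $\qm=\tm$ specialization, the explicit value.
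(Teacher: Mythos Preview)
Your proposal is correct and aligns with the paper's treatment. In fact the paper does not supply its own proof of this proposition at all: it simply states the result and attributes it to \cite{kirillov1999q} and \cite{warnaar2008bisymmetric}, then notes that setting $\qm=\tm$ recovers \eqref{eq:Cor53_s_0_exp}. Your write-up is precisely an unpacking of what those citations furnish --- Warnaar's refined Cauchy identity with the observation that the Cauchy kernel divides out to leave a $\qm$-free Izergin--Korepin type determinant --- together with the same $\qm=\tm$ specialization the paper uses to match the spin Hall--Littlewood side. Your $N=1$ sanity check and the alternative Pieri-rule route are both sound, though neither appears in the paper.
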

This result goes back to 
\cite{kirillov1999q}, see also \cite{warnaar2008bisymmetric}. 
More recently, in \cite{borodin2016stochastic_MM} 
expectation \eqref{eq:MM_expectation} was related to another type of an expectation
with respect to a stochastic higher spin six vertex model. We do not use the latter connection here.
The fact that \eqref{eq:MM_expectation} equals $\eqref{eq:Cor53_s_0_exp}\big\vert_{\text{rename $q$ to $\tm$}}$ 
follows from the $\qm$-independence in \Cref{prop:Macdonald_q_independence} 
after setting $\qm=\tm$.

\begin{corollary}
	\label{cor:HL_sHL}
	Let $\lambda\in \mathrm{Sign}_N$ be the random signature 
	distributed according to the Hall--Littlewood measure
	$\mathbf{MM}(0,\tm)$, and
	$\nu\in \mathrm{Sign}_N$ be distributed according to the spin Hall--Littlewood
	measure $\mathbf{sHL}(\tm)$
	(that is, with parameter $q$ renamed to $\tm$).
	Then the random variables $m_0(\lambda)$ and $m_0(\nu)$ have the same distribution.
\end{corollary}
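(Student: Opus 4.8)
The plan is to read off the claim from the $\qm$-independence of Proposition~\ref{prop:Macdonald_q_independence} combined with identity~\eqref{eq:Cor53_s_0_exp}, and then to invert a generating-function relation supported on the finite set $\{0,1,\dots,N\}$. Concretely, I would consider the quantity
\[
\Phi(\zeta):=\mathbb{E}_{\mathbf{MM}(\qm,\tm)}\prod_{j=1}^{N}\bigl(1+\zeta\,\qm^{\lambda_j}\tm^{N-j}\bigr),
\]
which by Proposition~\ref{prop:Macdonald_q_independence} does not depend on $\qm$, and evaluate it at the two special values $\qm=\tm$ and $\qm=0$.

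At $\qm=\tm$ one has $\qm^{\lambda_j}\tm^{N-j}=\tm^{\lambda_j+N-j}$, so $\Phi(\zeta)=\mathbb{E}_{\mathbf{MM}(\tm,\tm)}\prod_{j=1}^{N}(1+\zeta\tm^{\lambda_j+N-j})$, which by \eqref{eq:Cor53_s_0_exp} (with the spin Hall--Littlewood parameter $q$ renamed to $\tm$) equals $\mathbb{E}_{\mathbf{sHL}(\tm)}(-\zeta;\tm)_{m_0(\nu)}$. In the limit $\qm\to0$ with $\tm$ fixed, a factor with $\lambda_j\ge1$ tends to $1$, while a factor with $\lambda_j=0$ equals $1+\zeta\tm^{N-j}$; since the zero parts of $\lambda$ are exactly $\lambda_{N-m_0(\lambda)+1}=\dots=\lambda_N=0$, the exponents $N-j$ of the surviving factors run precisely over $\{0,1,\dots,m_0(\lambda)-1\}$. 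Hence the limiting product is $\prod_{i=0}^{m_0(\lambda)-1}(1+\zeta\tm^i)=(-\zeta;\tm)_{m_0(\lambda)}$, so $\Phi(\zeta)=\mathbb{E}_{\mathbf{MM}(0,\tm)}(-\zeta;\tm)_{m_0(\lambda)}$. Equating the two evaluations of the $\qm$-independent $\Phi$ gives
\[
\mathbb{E}_{\mathbf{MM}(0,\tm)}(-\zeta;\tm)_{m_0(\lambda)}=\mathbb{E}_{\mathbf{sHL}(\tm)}(-\zeta;\tm)_{m_0(\nu)}
\]
for all admissible $\zeta$, hence as an identity of polynomials in $\zeta$.

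To conclude equality of distributions I would use that $m_0(\lambda)$ and $m_0(\nu)$ are supported on $\{0,1,\dots,N\}$: each side above is a polynomial in $\zeta$ of degree at most $N$, equal to $\sum_{k=0}^{N}\mathrm{Prob}(m_0=k)\,(-\zeta;\tm)_k$. Since $(-\zeta;\tm)_k$ has degree exactly $k$, the family $\{(-\zeta;\tm)_k\}_{k=0}^{N}$ is a basis of the space of polynomials of degree $\le N$, so the two polynomials having the same expansion forces $\mathrm{Prob}(m_0(\lambda)=k)=\mathrm{Prob}(m_0(\nu)=k)$ for every $k$. (Equivalently, substituting $\zeta=-\tm^{-m}$ for $m=0,1,\dots,N$ yields a unitriangular linear system recovering the probabilities one at a time.)

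The only mildly delicate point is the $\qm\to0$ evaluation: one must ensure that $\qm^{\lambda_j}\to1$ occurs exactly when $\lambda_j=0$, so that the limiting product records precisely the multiplicity $m_0(\lambda)$, and that Proposition~\ref{prop:Macdonald_q_independence} and \eqref{eq:Cor53_s_0_exp} are invoked for $\zeta$ in a range where all the measures in question are honest probability measures. Everything else is routine bookkeeping with the $\tm$-Pochhammer symbol, and no new identities are needed beyond those already established.
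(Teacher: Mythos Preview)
Your proof is correct and follows essentially the same approach as the paper: both combine the $\qm$-independence of Proposition~\ref{prop:Macdonald_q_independence} (evaluated at $\qm=0$ and $\qm=\tm$) with identity~\eqref{eq:Cor53_s_0_exp} to obtain $\mathbb{E}_{\mathbf{MM}(0,\tm)}(-\zeta;\tm)_{m_0(\lambda)}=\mathbb{E}_{\mathbf{sHL}(\tm)}(-\zeta;\tm)_{m_0(\nu)}$, and then invoke the fact that the polynomials $\{(-\zeta;\tm)_k\}_{k=0}^N$ form a basis to recover the individual probabilities. You have simply spelled out in more detail the $\qm\to0$ computation and the inversion step that the paper leaves implicit.
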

\begin{proof}
	Comparing \eqref{eq:Cor53_s_0_exp} with $q$ renamed to $\tm$ and 
	\Cref{prop:Macdonald_q_independence} with $\qm=0$, we see that 
	\begin{equation*}
		\mathbb{E}_{\mathbf{MM}(0,\tm)}(-\zeta;\tm)_{m_0(\lambda)}=
		\mathbb{E}_{\mathbf{sHL}(\tm)}(-\zeta;\tm)_{m_0(\nu)}. 
	\end{equation*}
	Since $\zeta$ is an arbitrary complex number and $m_0\in \left\{ 0,1,\ldots,N  \right\}$, 
	the equality of expectations of $(-\zeta;\tm)_{m_0}$ is enough to 
	conclude equality of distributions of $m_0$.
\end{proof}

Let us conclude this section with several remarks.

\begin{remark}
	We have derived \Cref{cor:HL_sHL}
	from the refined Cauchy identity together with \Cref{prop:Macdonald_q_independence}.
	An alternative path 
	via stochastic models
	already appeared in the literature:
	\begin{itemize}
		\item 
			In \cite{BufetovMatveev2017} the quantity $m_0(\lambda)$, $\lambda\sim \mathbf{MM}(0,\tm)$,
			was identified in distribution (via a very nontrivial
			$\tm$-deformation of the Robinson--Schensted--Knuth correspondence)
			as the value of the height function of the stochastic six vertex model \cite{GwaSpohn1992}, \cite{BCG6V}.
		\item 
			In
			\cite{BufetovMucciconiPetrov2018},
			the same height function of the stochastic six vertex model
			is identified in distribution with $m_0(\nu)$, 
			where $\nu\in \mathbf{sHL}(\tm)$.
			This identification proceeds by 
			another probabilistic construction, bijectivization of the Yang--Baxter equation.
			This already settles the result of \Cref{cor:HL_sHL}.
		\item 
			Yet another argument alternative to \cite{BufetovMucciconiPetrov2018}
			is present
			in the earlier work
			\cite{BufetovPetrovYB2017}.
			There, a
			``dynamic'' $s_0$-deformation of the stochastic six vertex model
			was introduced.
			Its height function is 
			identified (also via a bijectivization of the Yang--Baxter equation)
			with
			$m_0(\mu)$, where $\mu$ is distributed
			according to a measure with probability weights proportional to 
			$\mathsf{F}_\mu(u_1,\ldots,u_N ) \,\mathsf{G}_\mu^*(v_1,\ldots,v_N )$,
			the term in
			the Cauchy identity
			\eqref{eq:F_G_Cauchy}.
			Setting $s_0=0$ in the latter also recovers 
			$\mathbf{sHL}(\tm)$.
	\end{itemize}
\end{remark}

\begin{remark}
	A measure
	with probability weights proportional to 
	$\mathsf{F}_\lambda(u_1,\ldots,u_N )\,\mathsf{F}^*_\lambda(v_1,\ldots,v_N )$
	may also be defined. Its normalization constant
	would not have a product form,
	unlike for Macdonald or spin Hall--Littlewood measures of \Cref{def:MM,def:sHL_M}.
	Rather, this normalization constant 
	is simply the right-hand side of the refined Cauchy identity \eqref{eq:intro_sHL_refined}
	with $\gamma=1$, and it is
	given by 
	\eqref{eq:Z_gamma_1}.
	The positivity of this normalization constant for $u_i,v_j\in[0,1)$
	follows by interpreting it as the
	domain wall partition function
	(\Cref{def:Z}) with nonnegative weights $R_{u_iv_j}$
	given in \Cref{fig:R_weights}.

	At this point it is not clear whether this
	version of a spin Hall--Littlewood 
	measure
	can be applied to interesting particle systems or analyzed asymptotically as $N\to\infty$.
\end{remark}

\begin{remark}
	As we saw in this subsection, setting $s_0=0$ makes 
	the deformed functions
	$\mathsf{C}_\lambda$
	proportional to the Schur polynomials as in \eqref{eq:C_lambda_s0}.
	There could be other 
	interesting degenerations of $\mathsf{C}_\lambda$ 
	simplifying the determinant \eqref{eq:C_lambda_H}.
	For example, the degeneration 
	$s_0\xi_0\to 0$, $s_0/\xi_0\to q^{1-N}$, $\gamma=q^{-N}\chi$ 
	considered in \Cref{sub:intHL_degen_all} produces
	\begin{equation}
		\mathsf{C}_\lambda(u_1,\ldots,u_N ;s_0 \gamma^{-1})
		\to
		\det\Bigl[
			-(q^{1-N}-q^{\lambda_j-j+1})h_{\lambda_j+i-j-1}
			+
			\left( 1-\chi q^{\lambda_j-j+1} \right)
			h_{\lambda_j+i-j}
		\Bigr]_{i,j=1}^N.
		\label{eq:C_lambda_another_degen}
	\end{equation}
	The determinant of linear combinations of two consecutive complete homogeneous
	functions $h_k$ bears some resemblance with determinants arising in the 
	study of Grothendieck symmetric polynomials (e.g., \cite{yeliussizov2020positive}).
	However, a direct connection is unclear at this point.
	Moreover, it is also not very
	clear whether the degeneration \eqref{eq:C_lambda_another_degen}
	has any probabilistic interpretation like the
	one
	discussed above in this subsection.
\end{remark}

\section{Application to ASEP eigenfunctions}
\label{sec:6v_reduction}

In this section we specialize the spin Hall--Littlewood functions to
eigenfunctions of the ASEP (Asymmetric Simple Exclusion Process).
This leads to determinantal summation formulas and
certain multitime observables of the ASEP.
In the context of ASEP, determinantal summation formulas
were discovered earlier by Corwin and Liu 
\cite{LiuCorwinPrivate2019}.

\subsection{ASEP and its eigenfunctions}

The ASEP is a continuous time Markov chain on particle configurations
on $\mathbb{Z}$
which depends on a single parameter $q\in[0,1)$.
We will only consider ASEP with finitely many particles (say,~$N$).
In continuous time, each particle has two independent exponential 
clocks, of rates $1$ and $q$.\footnote{By definition, an exponential clock
of rate $r>0$ rings after an exponential random time $T$ distributed such that
$\mathrm{Prob}\left( T>t \right)=e^{-rt}$, $t\ge0$.}
Clocks of different particles are independent.
When a clock rings, the particle attempts to jump by one to the left (for the clock of rate $1$)
or to the right (for the clock of rate $q$).
If the destination is occupied, the jump is suppressed, and the clock 
restarts.
See \Cref{fig:ASEP} for an illustration.

\begin{figure}[htpb]
	\centering
	\includegraphics{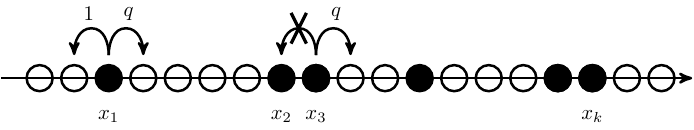}
	\caption{The ASEP particle system.}
	\label{fig:ASEP}
\end{figure}

We denote the particles' coordinates by $\vec x=(x_1<\ldots<x_N )$.
Denote by $\mathscr{A}$ the Markov generator of the ASEP acting on functions 
of $\vec x$:
\begin{equation}
	\label{eq:ASEP_generator}
	(\mathscr{A}f)(\vec x)=
	\sum_{i=1}^{N}
	\left( 
		q 
		\left( f(\vec x+\mathrm{e}_i)-f(\vec x) \right)
		\mathbf{1}_{x_{i+1}>x_i+1}
		+
		\left( f(\vec x-\mathrm{e}_i)-f(\vec x) \right)
		\mathbf{1}_{x_{i-1}<x_i-1}
	\right),
\end{equation}
where $x_0=-\infty$ and $x_{N+1}=+\infty$, by agreement, 
and $\mathrm{e}_i$ is the $i$-th standard basis vector.
Using the coordinate Bethe Ansatz
(e.g., see
	\cite{TW_ASEP1} or 
\cite[Section 7]{BCPS2014_arXiv_v4}),
the left and right eigenfunctions of $\mathscr{A}$ can be written in the following
form:\footnote{In the notation we
	put the variables $\vec x$ into the index, and $\vec z\in \mathbb{C}^N$ are complex parameters,
	to essentially match the notation of symmetric functions.}
\begin{equation}
	\label{eq:ASEP_ef}
	\begin{split}
		\Psi^r_{\vec x}(\vec z)&=
		\sum_{\sigma\in S_N}
		\prod_{1\le i<j\le N}\frac{z_{\sigma(i)}-q z_{\sigma(j)}}{z_{\sigma(i)}-z_{\sigma(j)}}
		\prod_{i=1}^{N}\left( \frac{1-z_{\sigma(i)}}{1-z_{\sigma(i)}/q} \right)^{-x_i}
		,
		\qquad 
		\mathscr{A}\Psi^r_{\vec x}(\vec z)=
		\mathsf{ev}(\vec z)\,\Psi^r_{\vec x}(\vec z);\\
		\Psi^\ell_{\vec x}(\vec z)&=
		\sum_{\sigma\in S_N}
		\prod_{1\le i<j\le N}\frac{q z_{\sigma(i)}- z_{\sigma(j)}}{z_{\sigma(i)}-z_{\sigma(j)}}
		\prod_{i=1}^{N}\left( \frac{1-z_{\sigma(i)}}{1-z_{\sigma(i)}/q} \right)^{x_i}
		,
		\qquad 
		\mathscr{A}^{\text{transpose}}\,\Psi^\ell_{\vec x}(\vec z)
		=
		\mathsf{ev}(\vec z)\,\Psi^\ell_{\vec x}(\vec z),
	\end{split}
\end{equation}
where the transposed generator is the same as
\eqref{eq:ASEP_generator}, but with rates $1$ and $q$ interchanged.
The eigenvalues are
\begin{equation}
	\label{eq:ASEP_eigenvalue}
	\mathsf{ev}(\vec z)=
	-
	\sum_{j=1}^{N}
	\frac{(1-q)^2}{(1-z_j)(1-q/z_j)}.
\end{equation}

We will also need the ASEP transition function,
$P_t(\vec x \to\vec y)$, $t\ge0$,
which is equal to the probability
that the process started from state $\vec x$ at time $0$, is at state $\vec y$ at time $t$.
This transition probability can be written down as an suitable multiple contour integral of the 
product of a left and a right eigenfunction.
A crucial ingredient for such a representation is 
the following orthogonality of the eigenfunctions:
\begin{equation}
	\label{eq:ASEP_Plancherel}
	\frac{1}{N!(2\pi \mathbf{i})^N}\oint
	dz_1 \ldots\oint dz_N 
	\,
	\frac{\prod_{i<j}(z_i-z_j)^2}{\prod_{i\ne j}(z_i-qz_j)}
	\prod_{j=1}^{N}\frac{1-1/q}{(1-z_j)(1-z_j/q)}
	\,
	\Psi^r_{\vec x}(\vec z)
	\,
	\Psi^\ell_{\vec y}(\vec z)
	=\mathbf{1}_{\vec x=\vec y}.
\end{equation}
All integrals here are over a small positively oriented circle around $1$.
Now having this orthogonality and eigenrelations \eqref{eq:ASEP_ef},
it is possible to solve the ASEP master equation\footnote{Also 
referred to as Kolmogorov forward equation, Smoluchowski equation, or Fokker–Planck equation.} 
in $(t,\vec y)$
with the initial condition $\mathbf{1}_{\vec y=\vec x}$ at $t=0$, 
and write
\begin{equation}
	\label{eq:ASEP_transition_function}
	\begin{split}
		&
		P_t(\vec x\to\vec y)=
		\frac{1}{N!(2\pi \mathbf{i})^N}\oint
		dz_1 \ldots\oint dz_N 
		\,
		\frac{\prod_{i<j}(z_i-z_j)^2}{\prod_{i\ne j}(z_i-qz_j)}
		\\&\hspace{120pt}\times
		\prod_{j=1}^{N}\frac{1-1/q}{(1-z_j)(1-z_j/q)}
		\,
		\exp\{t\cdot\mathsf{ev}(\vec z)\}
		\,
		\Psi^r_{\vec x}(\vec z)
		\,
		\Psi^\ell_{\vec y}(\vec z),
	\end{split}
\end{equation}
where all contours are small positive circles around $1$.
We refer to the proofs
of 
\eqref{eq:ASEP_Plancherel}, \eqref{eq:ASEP_transition_function}
and to further details on solving the ASEP particle system to
\cite{TW_ASEP1} or
\cite[Section 7]{BCPS2014_arXiv_v4}.

\subsection{Specialization of the spin Hall--Littlewood functions}

Recall that the spin Hall--Littlewood functions 
$\mathsf{F}_\lambda$
admit
symmetrization formulas
\eqref{eq:phi_notation}--\eqref{eq:F_symmetrization}.
The dual functions $\mathsf{F}^*_\lambda$ are given by \eqref{eq:F_star_explicit}.
To specialize these spin Hall--Littlewood functions to the ASEP eigenfunctions 
\eqref{eq:ASEP_ef}, we take homogeneous parameters
$s_x= s$, $\xi_x= 1$ for all $x\ge0$.
Then we set $s=-1/\sqrt{q}$. This corresponds to passing from the 
higher vertical spin in our vertex model
(as in \Cref{fig:w_weights,fig:w_star_weights})
to spin $\tfrac12$. The latter means that now at most one 
vertical path is allowed per edge.
Then we have for $\vec x=(x_1< \ldots <x_N )$:
\begin{equation}
	\label{eq:sHL_to_ASEP}
	\begin{split}
		\mathsf{F}_{(x_N,\ldots,x_1 )}\left( -\sqrt{q}/z_1,\ldots,-\sqrt{q}/z_N  \right)\Big\vert_{s=-1/\sqrt{q}}&=
		\frac{(1-q)^Nq^{(x_1+\ldots+x_N )/2}}{\prod_{i=1}^{N}(1-1/z_i)}\,
		\Psi^r_{\vec x}(\vec z)
		,
		\\
		\mathsf{F}^*_{(x_N,\ldots,x_1)}\left( -z_1/\sqrt{q},\ldots,-z_N/\sqrt{q}  \right)\Big\vert_{s=-1/\sqrt{q}}&=
		\left( -q \right)^{-N}\,
		\frac{(1-q)^N q^{-(x_1+\ldots+x_N )/2}}{\prod_{i=1}^{N}(1-z_i/q)}\,
		\Psi^\ell_{\vec x}(\vec z)
		.
	\end{split}
\end{equation}
Note that the prefactor in $\mathsf{F}^*_\lambda$ \eqref{eq:F_star_explicit}
vanishes at $s=-1/\sqrt{q}$ unless all multiplicities $m_r(\lambda)$
are either zero or one. In particular, $(s^2;q)_1 /(q;q)_1=(-q)^{-1}$, 
which produces the factor $(-q)^{-N}$ in the second formula in 
\eqref{eq:sHL_to_ASEP}.

\subsection{Summation identities for the ASEP eigenfunctions}
\label{sub:summation_ASEP_ef}

Specializing our main result (\Cref{thm:intro_sHL_refined}), we obtain the following:
\begin{corollary}
	\label{cor:ASEP_summation_formula}
	Let
	\begin{equation}
		\label{eq:ASEP_summability_condition}
		\left|\frac{(1-w_j)(1-z_i/q)}{(1-z_i)(1-w_j/q)}\right|<1
		\qquad 
		\textnormal{for all $i,j$}.
	\end{equation}
	Then
	\begin{align}
		\nonumber
			&
			\sum_{0\le x_1<x_2<\ldots<x_N }
			\Psi^r_{\vec x}(\vec z)\,
			\Psi^\ell_{\vec x}(\vec w)
			\\&\hspace{20pt}=
			\prod_{j=1}^{N}
			(1-z_j)(1-w_j/q)\,
			\frac{(1/q-1)^{-N}\prod_{i,j=1}^{N}(z_i-qw_j)}{\prod_{1\le i< j\le N}(z_i-z_j)(w_j-w_i)}
			\,\det\left[ 
				\frac{1}{(z_i-w_j)(z_i-q w_j)}
			\right]_{i,j=1}^{N}
			\nonumber
			\\&\hspace{20pt}=
			\frac{(1-q)^{-2N}}{\prod_{1\le i<j\le N}(z_j-z_i)}
		\label{eq:ASEP_summation_formula}
			\,
			\det\Biggl[ 
				z_j^{i}
				\biggl\{
					\left( 1-1/z_j \right)
					\left( 1-q/z_j \right)
					\prod_{l=1}^{N}\frac{z_j-qw_l}{z_j-w_l}
					\\&\hspace{240pt}
					\nonumber
					-
					q^{N-i}
					(1-1/z_j)(1-q^2/z_j)
				\biggr\}
			\Biggr]_{i,j=1}^{N}.
	\end{align}
\end{corollary}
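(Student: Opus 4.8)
The plan is to \emph{specialize} Theorems~\ref{thm:intro_sHL_refined} and~\ref{thm:intro_det_identity} at $\gamma=1$, with the spin-$\tfrac12$ parameters $s_x=-1/\sqrt q$ and $\xi_x=1$ for all $x\ge0$, and with the change of variables $u_i=-\sqrt q/z_i$, $v_j=-w_j/\sqrt q$ (so that $u_iv_j=w_j/z_i$), and then to translate both sides through the dictionary~\eqref{eq:sHL_to_ASEP}. No new idea is needed beyond the two main theorems; the corollary is essentially their spin-$\tfrac12$ ASEP incarnation.

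First I would treat the left-hand side of~\eqref{eq:intro_sHL_refined}. At $\gamma=1$ the refinement factor $\frac{(\gamma q;q)_{m_0(\lambda)}(\gamma^{-1}s_0^2;q)_{m_0(\lambda)}}{(q;q)_{m_0(\lambda)}(s_0^2;q)_{m_0(\lambda)}}$ is identically $1$, so the sum collapses to $\sum_{\lambda\in\mathrm{Sign}_N}\mathsf{F}_\lambda\,\mathsf{F}^*_\lambda$. Since here $s_0^2=1/q$, the prefactor $\prod_{r\ge0}(s_r^2;q)_{m_r(\lambda)}/(q;q)_{m_r(\lambda)}$ in~\eqref{eq:F_star_explicit} vanishes as soon as some $m_r(\lambda)\ge2$, so only signatures with distinct parts survive, i.e. $\lambda=(x_N>x_{N-1}>\dots>x_1\ge0)$; these are in bijection with the ASEP configurations $0\le x_1<\dots<x_N$. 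Applying~\eqref{eq:sHL_to_ASEP} with $\vec z$ for $\mathsf{F}_\lambda$ and $\vec w$ for $\mathsf{F}^*_\lambda$, the factors $q^{\pm(x_1+\dots+x_N)/2}$ cancel and all the remaining, $\lambda$-independent, prefactors collect to $\frac{(1-q)^{2N}(-q)^{-N}}{\prod_j(1-1/z_j)(1-w_j/q)}$; hence the left side equals this constant times $\sum_{0\le x_1<\dots<x_N}\Psi^r_{\vec x}(\vec z)\,\Psi^\ell_{\vec x}(\vec w)$. A direct substitution into~\eqref{eq:admissible_u_v} shows it becomes exactly~\eqref{eq:ASEP_summability_condition}, which is the stated summability hypothesis.

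Next I would specialize the two determinantal right-hand sides. At $\gamma=1$ the first summand of the determinant entry in~\eqref{eq:intro_sHL_refined} drops out, and after the substitution the entry equals $\frac{(1-q)(z_i-1)(q-w_j)z_i}{q\,(z_i-w_j)(z_i-qw_j)}$; pulling out the row factor $\frac{(1-q)(z_i-1)z_i}{q}$ and the column factor $(q-w_j)$ reduces the determinant to $\det\bigl[1/((z_i-w_j)(z_i-qw_j))\bigr]$, and combining these with the prefactor $\prod_j\frac{1}{(1-s_0\xi_0u_j)(1-\xi_0^{-1}s_0v_j)}\cdot\frac{\prod_{i,j}(1-qu_iv_j)}{\prod_{i<j}(u_i-u_j)(v_i-v_j)}$ (in which the two Vandermondes contribute cancelling powers $q^{\pm\binom{N}{2}/2}$) yields the first, Izergin--Korepin type, expression in~\eqref{eq:ASEP_summation_formula}; alternatively this form is read off directly from~\eqref{eq:Z_gamma_1} using Propositions~\ref{prop:equality_Z_S} and~\ref{prop:S_as_refined}. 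For the last expression I would feed the same left-hand side through Theorem~\ref{thm:intro_det_identity} at $\gamma=1$: its determinant entry becomes $(-\sqrt q)^{N-i-1}z_j^{\,i+1-N}\cdot\frac{1}{\sqrt q}(1-1/z_j)\bigl\{(1-q/z_j)\prod_{l=1}^{N}\frac{z_j-qw_l}{z_j-w_l}-q^{N-i}(1-q^2/z_j)\bigr\}$; extracting the row factor $(-\sqrt q)^{N-i-1}/\sqrt q$ and the column factor $z_j^{\,1-N}$ leaves precisely the determinant in the last line of~\eqref{eq:ASEP_summation_formula}, and the extracted factors combine with the same prefactor as before.

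The conceptual content of the corollary is thus entirely contained in Theorems~\ref{thm:intro_sHL_refined} and~\ref{thm:intro_det_identity}, and the only real work --- and the main place an error could slip in --- is the prefactor bookkeeping in the previous paragraph: tracking the powers of $q$, $\sqrt q$, $z_i$, $w_j$ and $(-1)$, the rearrangements $u_i-u_j=\sqrt q\,(z_i-z_j)/(z_iz_j)$ and $v_i-v_j=(w_j-w_i)/\sqrt q$ (which explain the asymmetric Vandermondes $\prod_{i<j}(z_i-z_j)(w_j-w_i)$ and $\prod_{i<j}(z_j-z_i)$ appearing in the statement), and the index convention $\lambda=(x_N,\dots,x_1)$, and verifying that everything collapses to the stated prefactors $\prod_j(1-z_j)(1-w_j/q)\cdot\frac{(1/q-1)^{-N}\prod_{i,j}(z_i-qw_j)}{\prod_{i<j}(z_i-z_j)(w_j-w_i)}$ and $\frac{(1-q)^{-2N}}{\prod_{i<j}(z_j-z_i)}$ respectively.
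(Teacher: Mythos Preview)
Your proposal is correct and follows essentially the same approach as the paper: specialize \Cref{thm:intro_sHL_refined} (and \Cref{thm:intro_det_identity} for the second determinant) at $\gamma=1$, $s_x\equiv-1/\sqrt q$, $\xi_x\equiv1$, $u_i=-\sqrt q/z_i$, $v_j=-w_j/\sqrt q$, and translate via~\eqref{eq:sHL_to_ASEP}. The paper's own proof says only ``this is simply the ASEP specialization of \Cref{thm:intro_sHL_refined} with $\gamma=1$'' and defers the rest to ``necessary simplifications''; you have spelled out exactly those simplifications and the prefactor bookkeeping.
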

In a different language concerning the six-vertex model,
the first of identities \eqref{eq:ASEP_summation_formula}
(leading to the Izergin--Korepin determinant)
essentially appears in \cite{cantini2020integral}.
\begin{proof}[Proof of \Cref{cor:ASEP_summation_formula}]
	This is simply the ASEP specialization of \Cref{thm:intro_sHL_refined} with $\gamma=1$.
	Indeed, observe that for $s=-1/\sqrt{q}$ the 
	summation over $\lambda\in \mathrm{Sign}_N$ 
	turns into a summation over strictly ordered 
	$N$-tuples $\vec x$ with $x_1\ge0$.
	After necessary simplifications we arrive at the desired determinants in the right-hand side.
\end{proof}
\begin{remark}
	\label{rmk:no_refinement_ASEP}
	Note that the refined Cauchy identity of
	\Cref{thm:intro_sHL_refined}
	with $\gamma\ne 1$
	does not specialize nicely to the ASEP case.
	Indeed, for general $\gamma$ the denominator $(s_0^2;q)_{m_0(\lambda)}$ cancels with the 
	same prefactor in $\mathsf{F}^*_{\lambda}$, which means that arbitrarily 
	many vertical paths can be at location $0$.
	Setting $\gamma=1$ removes this issue and leads to a strictly ordered summation
	which precisely matches the summation of the ASEP eigenfunctions.
\end{remark}

The sum of the products
of two ASEP eigenfunctions 
can start from an arbitrary location, not necessarily zero:
\begin{lemma}
	\label{cor:shift_inv}
	Assuming \eqref{eq:ASEP_summability_condition}, 
	for any $k\in \mathbb{Z}$ we have
	\begin{equation*}
		\sum_{k\le x_1<x_2<\ldots<x_N }
		\Psi^r_{\vec x}(\vec z)\,
		\Psi^\ell_{\vec x}(\vec w)
		=
		\prod_{j=1}^{N}
		\left( \frac{(1-w_j)(1-z_j/q)}{(1-z_j)(1-w_j/q)} \right)^{k}
		\sum_{0\le x_1<x_2<\ldots<x_N }
		\Psi^r_{\vec x}(\vec z)\,
		\Psi^\ell_{\vec x}(\vec w).
	\end{equation*}
\end{lemma}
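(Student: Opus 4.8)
The plan is to reduce the statement to the elementary observation that each of the two ASEP eigenfunctions transforms multiplicatively under a global shift of the coordinate vector. Write $\vec x+k$ as shorthand for $(x_1+k,\ldots,x_N+k)$. First I would record that for any $k\in\mathbb{Z}$,
\begin{equation*}
	\Psi^r_{\vec x+k}(\vec z)
	=
	\prod_{j=1}^{N}\left(\frac{1-z_j}{1-z_j/q}\right)^{-k}\Psi^r_{\vec x}(\vec z),
	\qquad
	\Psi^\ell_{\vec x+k}(\vec w)
	=
	\prod_{j=1}^{N}\left(\frac{1-w_j}{1-w_j/q}\right)^{k}\Psi^\ell_{\vec x}(\vec w).
\end{equation*}
This is immediate from the symmetrization formulas \eqref{eq:ASEP_ef}: replacing each $x_i$ by $x_i+k$ multiplies the $\sigma$-th summand by $\prod_{i=1}^{N}\bigl((1-z_{\sigma(i)})/(1-z_{\sigma(i)}/q)\bigr)^{\mp k}$, and since $\sigma$ runs over all of $\{1,\ldots,N\}$ this product equals $\prod_{j=1}^{N}\bigl((1-z_j)/(1-z_j/q)\bigr)^{\mp k}$ for every $\sigma$; hence it factors out of the finite sum over $S_N$. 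The opposite signs of the exponents $-x_i$ in $\Psi^r$ and $+x_i$ in $\Psi^\ell$ are exactly what produces the specific form of the prefactor in the claim, and this sign bookkeeping is the only point requiring care.

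Next, multiplying the two identities gives, for every strictly increasing $\vec x$,
\begin{equation*}
	\Psi^r_{\vec x+k}(\vec z)\,\Psi^\ell_{\vec x+k}(\vec w)
	=
	\prod_{j=1}^{N}\left(\frac{(1-w_j)(1-z_j/q)}{(1-z_j)(1-w_j/q)}\right)^{k}
	\Psi^r_{\vec x}(\vec z)\,\Psi^\ell_{\vec x}(\vec w).
\end{equation*}
I would then change the summation index by $\vec y=\vec x-k$, which is a bijection between $\{k\le x_1<x_2<\ldots<x_N\}$ and $\{0\le y_1<y_2<\ldots<y_N\}$, and apply the displayed identity term by term. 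Since the $k$-dependent prefactor does not depend on $\vec y$, it pulls out of the sum, and what remains is precisely $\sum_{0\le x_1<\ldots<x_N}\Psi^r_{\vec x}(\vec z)\,\Psi^\ell_{\vec x}(\vec w)$, which gives the assertion.

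Concerning convergence: the hypothesis \eqref{eq:ASEP_summability_condition} is the same condition under which the sum starting at $0$ is absolutely convergent (as used in \Cref{cor:ASEP_summation_formula}); since the summand of the left-hand side is, term by term, a fixed constant multiple of the summand of a sum starting at $0$, absolute convergence is preserved and all the manipulations above are justified. I do not expect a genuine obstacle here; this is a short verification whose only subtle point is tracking the two opposite coordinate exponents in $\Psi^r$ versus $\Psi^\ell$.
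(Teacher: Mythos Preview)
Your argument is correct and follows the same approach as the paper: both proofs rest on the translation identities $\Psi^r_{\vec x+k}(\vec z)=\prod_j\bigl((1-z_j)/(1-z_j/q)\bigr)^{-k}\Psi^r_{\vec x}(\vec z)$ and $\Psi^\ell_{\vec x+k}(\vec w)=\prod_j\bigl((1-w_j)/(1-w_j/q)\bigr)^{k}\Psi^\ell_{\vec x}(\vec w)$, which the paper simply records and you derive in slightly more detail. The change of summation variable and convergence remark you include are the natural completions the paper leaves implicit.
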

\begin{proof}
	The identity follows from 
	\begin{equation}
		\label{eq:ASEP_eigenfunctions_translation_invariance}
		\Psi^r_{\vec x+k}(\vec z)=\Psi^r_{\vec x}(\vec z)
		\prod_{j=1}^{N}\left( \frac{1-z_j}{1-z_j/q} \right)^{-k}
		,\qquad 
		\Psi^\ell_{\vec x+k}(\vec w)=
		\Psi^\ell_{\vec x}(\vec w)
		\prod_{j=1}^{N}\left( \frac{1-w_j}{1-w_j/q} \right)^{k},
	\end{equation}
	which ensures the desired shifting property.
\end{proof}

We will also need a summation identity for
a single ASEP eigenfunction. This identity goes back to
\cite{TW_ASEP1}, and can also be linked to
the orthogonality of the ASEP eigenfunctions \cite{BCPS2014_arXiv_v4}.

\begin{proposition}
	\label{prop:TW_magic_identity}
	Assume that
	$\left|\frac{1-z_i}{1-z_i/q}\right|<1$ for all $i=1,\ldots,N $.
	Then we have
	\begin{equation*}
		\sum_{0\le x_1<x_2<\ldots<x_N }
		\Psi_{\vec x}^{\ell}(\vec z)
		=
		\frac{(-q)^{\frac{N(N-1)}{2}}(1-z_1/q)\ldots(1-z_N/q) }{(1-1/q)^N z_1 \ldots z_N }.
	\end{equation*}
\end{proposition}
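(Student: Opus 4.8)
The plan is to prove the identity directly, in the spirit of Tracy--Widom \cite{TW_ASEP1}: substitute the symmetrization formula for $\Psi^{\ell}_{\vec x}(\vec z)$, carry out the summation over $0\le x_1<\dots<x_N$ term by term, and then evaluate the remaining sum over $S_N$ by induction on $N$. Write $y_i:=\frac{1-z_i}{1-z_i/q}$, so the hypothesis reads $|y_i|<1$ and $\Psi^{\ell}_{\vec x}(\vec z)=\sum_{\sigma\in S_N}\mathsf{w}_{\sigma}\prod_{i=1}^{N}y_{\sigma(i)}^{x_i}$ with the Bethe weight $\mathsf{w}_{\sigma}=\prod_{1\le i<j\le N}\frac{qz_{\sigma(i)}-z_{\sigma(j)}}{z_{\sigma(i)}-z_{\sigma(j)}}$. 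Since $|y_i|<1$, every geometric series below converges absolutely, so $\sum_{\vec x}$ and $\sum_{\sigma}$ may be interchanged.

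First I would compute the inner sum. Writing $x_1=n_1\ge0$ and $x_i=x_{i-1}+1+n_i$ with $n_i\ge0$ for $i\ge2$, the sum factors into independent geometric series in the $n_i$, giving
\[
\sum_{0\le x_1<\dots<x_N}\prod_{i=1}^{N}y_{\sigma(i)}^{x_i}=\Bigl(\prod_{i=1}^{N}y_{\sigma(i)}^{\,i-1}\Bigr)\prod_{k=1}^{N}\frac{1}{1-y_{\sigma(k)}y_{\sigma(k+1)}\cdots y_{\sigma(N)}}.
\]
The $k=1$ factor equals $1/(1-\prod_{l}y_l)$ and is $\sigma$-independent, so it comes out of the sum; using the elementary identity $\prod_{i}y_{\sigma(i)}^{\,i-1}=\prod_{k=2}^{N}\bigl(y_{\sigma(k)}\cdots y_{\sigma(N)}\bigr)$ one obtains
\[
L_N:=\sum_{0\le x_1<\dots<x_N}\Psi^{\ell}_{\vec x}(\vec z)=\frac{1}{1-\prod_l y_l}\sum_{\sigma\in S_N}\mathsf{w}_{\sigma}\prod_{k=2}^{N}\frac{y_{\sigma(k)}\cdots y_{\sigma(N)}}{1-y_{\sigma(k)}\cdots y_{\sigma(N)}}=:\frac{\Sigma_N}{1-\prod_l y_l}.
\]
Now group the permutations contributing to $\Sigma_N$ by the value $a=\sigma(1)$ placed in position $1$. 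The piece of $\mathsf{w}_{\sigma}$ attached to position $1$ equals $\prod_{b\ne a}\frac{qz_a-z_b}{z_a-z_b}$, independently of how the remaining values are arranged in positions $2,\dots,N$, and the ``tails'' $y_{\sigma(k)}\cdots y_{\sigma(N)}$ for $k\ge2$ involve only $\{z_b,y_b:b\ne a\}$. After reindexing positions $2,\dots,N$ to $1,\dots,N-1$ and pulling out the now $\sigma$-independent factor $\bigl(\prod_{b\ne a}y_b\bigr)/\bigl(1-\prod_{b\ne a}y_b\bigr)$, the remaining sum is exactly $\Sigma_{N-1}$ in the variables indexed by $[N]\setminus\{a\}$. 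Combining with the prefactor yields the exact recursion
\[
L_N=\frac{1}{1-\prod_l y_l}\sum_{a=1}^{N}\Bigl(\prod_{b\ne a}\frac{qz_a-z_b}{z_a-z_b}\Bigr)\Bigl(\prod_{b\ne a}y_b\Bigr)\,L_{N-1}\big|_{[N]\setminus\{a\}},
\]
with base case $L_1=\sum_{x\ge0}y_1^{x}=1/(1-y_1)$, which already equals the claimed right-hand side for $N=1$.

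The step I expect to be the main obstacle is checking that the stated product formula propagates through this recursion. Plugging the inductive hypothesis into the recursion and cancelling $\prod_b(1-z_b/q)$ against $1-\prod_l y_l$, everything reduces to the single rational-function identity
\[
\sum_{a=1}^{N}\frac{z_a}{1-z_a}\prod_{b\ne a}\frac{qz_a-z_b}{z_a-z_b}=\frac{1}{q-1}\Bigl(\prod_{b=1}^{N}\frac{q-z_b}{1-z_b}-q^{N}\Bigr),
\]
which I would prove by the Residue Theorem applied to $h(w)=\frac{1}{1-w}\prod_{b=1}^{N}\frac{qw-z_b}{w-z_b}$: the left-hand side is $\frac{1}{q-1}\sum_a\operatorname{Res}_{w=z_a}h$, while the only other poles of $h$ are a simple one at $w=1$, with residue $-\prod_b\frac{q-z_b}{1-z_b}$, and the point at infinity, with residue $q^{N}$, so the vanishing of the total residue sum gives the identity. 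Feeding this back closes the induction, and a short simplification of the accumulated constant recovers the asserted closed form. Two remarks: this computation is essentially the classical Tracy--Widom summation lemma, so one may alternatively just cite \cite{TW_ASEP1},\cite{BCPS2014_arXiv_v4}; and an entirely different route is to let $\vec z\to0$ along generic rays in the already-established two-variable identity of \Cref{cor:ASEP_summation_formula}, using that $\Psi^{r}_{\vec x}(\vec z)$ then tends to the ($\vec x$- and direction-independent) constant $\sum_{\sigma\in S_N}\prod_{i<j}\frac{z_{\sigma(i)}-qz_{\sigma(j)}}{z_{\sigma(i)}-z_{\sigma(j)}}=\prod_{i=1}^{N}\frac{1-q^{i}}{1-q}$; that route avoids the induction but requires a slightly delicate limit of the Izergin--Korepin determinant on the right-hand side of \eqref{eq:ASEP_summation_formula}.
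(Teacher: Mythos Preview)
Your proposal is correct and follows essentially the same route as the paper: expand the symmetrization formula for $\Psi^{\ell}_{\vec x}$, sum the geometric progressions over $0\le x_1<\dots<x_N$, and then collapse the remaining sum over $S_N$ via the Tracy--Widom combinatorial identity. The only difference is that the paper simply cites \cite[(1.6)]{TW_ASEP1} for that last step, whereas you re-derive it by the induction-on-$N$ / residue argument (which is one standard way to prove that identity); your version is therefore more self-contained but not a genuinely different approach.
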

\begin{proof}
	In the proof we use the notation $\zeta_i=(1-z_i)/(1-z_i/q)$.
	Expanding the definition of $\Psi_{\vec x}^{\ell}(\vec z)$ and summing the geometric progressions, we get
	\begin{align*}
		&\sum_{0\le x_1<x_2<\ldots<x_N }
		\Psi_{\vec x}^{\ell}(\vec z)
		=
		\sum_{\sigma\in S_N}
		\sigma\left( \prod_{i<j}\frac{z_j-qz_i}{z_j-z_i}
			\sum_{0\le x_1<x_2<\ldots<x_N }\,
			\prod_{i=1}^{N}\left( \frac{1-z_i}{1-z_i/q} \right)^{x_i}
		\right)
		\\&\hspace{45pt}=
		\sum_{\sigma\in S_N}
		\sigma\left( 
			\prod_{i<j}\frac{q-(1+q)\zeta_i+\zeta_i\zeta_j}{\zeta_j-\zeta_i}
			\,\frac{\zeta_2\zeta_3^2 \ldots \zeta_N^{N-1} }{(1-\zeta_1\zeta_2\ldots \zeta_N )\ldots (1-\zeta_{N-1}\zeta_N)(1-\zeta_N)}
		\right),
	\end{align*}
	where the permutation $\sigma$ acts by permuting the variables $z_i$ or, equivalently, $\zeta_i$.
	The symmetrization in the previous formula
	is simplified using identity \cite[(1.6)]{TW_ASEP1} to
	\begin{equation*}
		\frac{(-q)^{\frac{N(N-1)}{2}}}{(1-\zeta_1)\ldots(1-\zeta_N) }
		=
		\frac{(-q)^{\frac{N(N-1)}{2}}(1-z_1/q)\ldots(1-z_N/q) }{(1-1/q)^N z_1 \ldots z_N },
	\end{equation*}
	which gives the result.
\end{proof}

\subsection{A two-time formula for ASEP}

We will now use the summation identities from 
\Cref{sub:summation_ASEP_ef} to compute 
a two-time probability in ASEP in a contour integral form.
In a similar way one can write down multitime probabilities, 
and we consider two times only to shorten the notation.

\begin{theorem}
	\label{thm:ASEP_two_time}
	Let the $N$-particle ASEP $\vec x(t)$ start from a configuration
	$\vec x(0)=\vec x$, and take arbitrary times $0\le t_1\le t_2$ and locations $k_1,k_2\in \mathbb{Z}$.
	The probability that at time $t_j$ all particles are to the right of $k_j$, $j=1,2$, is equal to
	\begin{align*}
		&
		\mathrm{Prob}\bigl(x_1(t_1)\ge k_1,\ x_1(t_2)\ge k_2\bigr)=
		\frac{(-1)^{N}q^{\frac{N(N-1)}{2}}}{(N!)^2(2\pi \mathbf{i})^{2N}}
		\oint \frac{dz_1}{1-z_1} \ldots\oint \frac{dz_N}{1-z_N}
		\oint \frac{dw_1}{w_1} \ldots\oint \frac{dw_N }{w_N}
		\\&\hspace{40pt}\times
		\frac{\prod_{i<j}(z_i-z_j)(w_i-w_j)\prod_{i,j=1}^{N}(w_i-qz_j)}{\prod_{i\ne j}(z_i-qz_j)(w_i-qw_j)}
		\,\det\left[ 
			\frac{1}{(w_i-z_j)(w_i-q z_j)}
		\right]_{i,j=1}^{N}
		\\&\hspace{40pt}\times
		\exp\{t_1\,\mathsf{ev}(\vec z)+(t_2-t_1)\,\mathsf{ev}(\vec w)\}
		\,
		\prod_{j=1}^{N}
		\left( \frac{1-z_j}{1-z_j/q} \right)^{k_1}\left( \frac{1-w_j}{1-w_j/q} \right)^{k_2-k_1}
		\Psi^r_{\vec x}(\vec z)
		.
	\end{align*}
	All integration contours are small positively oriented 
	circles around $1$, 
	with $|z_i-1|< |w_i-1|$ for all $z_i,w_j$ on the contours.
\end{theorem}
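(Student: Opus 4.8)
The plan is to compute the two-time probability by inserting the spectral (eigenfunction) decomposition of the ASEP transition function at the two time increments, and then collapse the intermediate sum over particle positions using the refined Cauchy identity (in its $\gamma=1$ ASEP specialization, \Cref{cor:ASEP_summation_formula}) together with the single-eigenfunction summation of \Cref{prop:TW_magic_identity}. Concretely, $\mathrm{Prob}\bigl(x_1(t_1)\ge k_1,\ x_1(t_2)\ge k_2\bigr)$ equals
\begin{equation*}
	\sum_{\substack{\vec y:\ y_1\ge k_1 \\ \vec y':\ y'_1\ge k_2}}
	P_{t_1}(\vec x\to\vec y)\,P_{t_2-t_1}(\vec y\to\vec y'),
\end{equation*}
and I would substitute the contour-integral form \eqref{eq:ASEP_transition_function} for each factor, with integration variables $\vec z$ for the first increment and $\vec w$ for the second. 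The first increment contributes $\Psi^r_{\vec x}(\vec z)$ (which stays, as it carries the fixed initial condition) and $\Psi^\ell_{\vec y}(\vec z)$; the second contributes $\Psi^r_{\vec y}(\vec w)$ and $\Psi^\ell_{\vec y'}(\vec w)$; the eigenvalue exponentials $\exp\{t_1\,\mathsf{ev}(\vec z)\}$ and $\exp\{(t_2-t_1)\,\mathsf{ev}(\vec w)\}$ factor out of all sums.

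The key step is to perform the two spatial sums in the right order. First sum over $\vec y'$ with $y'_1\ge k_2$: only $\Psi^\ell_{\vec y'}(\vec w)$ depends on $\vec y'$, and by \Cref{cor:shift_inv} applied to \Cref{prop:TW_magic_identity} (i.e. shifting the base point from $0$ to $k_2$, which multiplies by $\prod_j ((1-w_j)/(1-w_j/q))^{k_2}$), this sum evaluates in closed product form. Next sum over $\vec y$ with $y_1\ge k_1$: here both $\Psi^\ell_{\vec y}(\vec z)$ and $\Psi^r_{\vec y}(\vec w)$ depend on $\vec y$, so this is exactly a sum of a product of a right and a left ASEP eigenfunction over a strictly ordered range starting at $k_1$ — precisely the object evaluated by \Cref{cor:shift_inv} combined with \Cref{cor:ASEP_summation_formula}. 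This produces the Izergin--Korepin type determinant $\det\bigl[(w_i-z_j)^{-1}(w_i-qz_j)^{-1}\bigr]_{i,j=1}^N$ together with the Vandermonde and product prefactors, and the shift factor $\prod_j ((1-w_j)/(1-w_j/q))^{k_1}\cdot\prod_j((1-z_j)/(1-z_j/q))^{k_1}$ — note the roles of $\vec z$ and $\vec w$ are swapped relative to \eqref{eq:ASEP_summation_formula} because here $\Psi^\ell$ carries $\vec z$ and $\Psi^r$ carries $\vec w$, which is why the determinant in the statement is written with $w_i-z_j$ rather than $z_i-w_j$.

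After these two summations, one is left with a $2N$-fold contour integral over $\vec z$ and $\vec w$ whose integrand is the product of: the two Plancherel weights $\frac{\prod_{i<j}(z_i-z_j)^2}{\prod_{i\ne j}(z_i-qz_j)}\prod_j\frac{1-1/q}{(1-z_j)(1-z_j/q)}$ and the analogous one in $\vec w$; the two eigenvalue exponentials; $\Psi^r_{\vec x}(\vec z)$; the shift factors $\prod_j\bigl(\frac{1-z_j}{1-z_j/q}\bigr)^{k_1}\bigl(\frac{1-w_j}{1-w_j/q}\bigr)^{k_2-k_1}$ (after combining the $k_1$ and $k_2$ contributions); and the combinatorial output of the two sums. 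The remaining work is purely bookkeeping: collect all the rational prefactors — the Vandermondes and $(z_i-w_j)$-type products from the two Plancherel weights and from \Cref{cor:ASEP_summation_formula}, the powers of $(1-q)$ and $(1-1/q)$, the factors $(1-z_j)^{-1}$ and $w_j^{-1}$ that survive, and the sign $(-q)^{N(N-1)/2}$ from \Cref{prop:TW_magic_identity} — and check they simplify to exactly the prefactor displayed in \Cref{thm:ASEP_two_time}, namely $\frac{(-1)^N q^{N(N-1)/2}}{(N!)^2(2\pi\mathbf{i})^{2N}}\cdot\frac{\prod_{i<j}(z_i-z_j)(w_i-w_j)\prod_{i,j}(w_i-qz_j)}{\prod_{i\ne j}(z_i-qz_j)(w_i-qw_j)}$. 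One must also verify the contour nesting condition $|z_i-1|<|w_i-1|$: the $\vec y'$-sum in \Cref{prop:TW_magic_identity} needs $|(1-w_j)/(1-w_j/q)|<1$, and the $\vec y$-sum needs the condition \eqref{eq:ASEP_summability_condition}, $\bigl|\tfrac{(1-w_j)(1-z_i/q)}{(1-z_i)(1-w_j/q)}\bigr|<1$, which for contours that are small circles around $1$ is exactly ensured by taking the $\vec z$-contour strictly inside the $\vec w$-contour. The main obstacle I anticipate is not conceptual but the sign- and prefactor-chase through \Cref{cor:ASEP_summation_formula}'s output — in particular keeping straight that the determinant identity must be applied with the substitution that interchanges the two sets of variables (since $\Psi^\ell$ here holds $\vec z$), and correctly matching the geometric-series convergence regions to the stated nesting of contours.
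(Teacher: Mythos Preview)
Your proposal is correct and follows essentially the same route as the paper's proof: decompose the two-time probability via the Chapman--Kolmogorov sum, insert the contour-integral transition kernels \eqref{eq:ASEP_transition_function} in variables $\vec z$ and $\vec w$, then collapse the intermediate spatial sums using \Cref{prop:TW_magic_identity} (for the final configuration) and \Cref{cor:ASEP_summation_formula} with \Cref{cor:shift_inv} (for the middle configuration), after deforming the $\vec z$-contours inside the $\vec w$-contours to guarantee convergence. One minor slip: when you quote the summability condition \eqref{eq:ASEP_summability_condition} you write it in its original form, but since here $\Psi^\ell$ carries $\vec z$ and $\Psi^r$ carries $\vec w$ you must swap the roles, giving $\bigl|\tfrac{(1-z_j)(1-w_i/q)}{(1-w_i)(1-z_j/q)}\bigr|<1$ --- which is indeed what forces $|z_i-1|<|w_j-1|$, as you correctly conclude.
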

In the case of TASEP (which is ASEP with $q=0$, i.e., only with left jumps),
multitime formulas and their asymptotics were recently studied in
\cite{JohanssonRahman2019},
\cite{liu2019multi}.
\begin{proof}[Proof of \Cref{thm:ASEP_two_time}]
	We can write using \eqref{eq:ASEP_transition_function}:
	\begin{align*}
		&
		\mathrm{Prob}\bigl(x_1(t_1)\ge k_1,\ x_1(t_2)\ge k_2\bigr)=
		\sum_{x_1'\ge k_1,\ x_1''\ge k_2}
		P_{t_1}(\vec x\to \vec x')\,
		P_{t_2-t_1}(\vec x'\to \vec x'')
		\\&\hspace{20pt}=
		\frac{1}{(N!)^2(2\pi \mathbf{i})^{2N}}
		\sum_{x_1'\ge k_1,\ x_1''\ge k_2}
		\oint dz_1 \ldots\oint dz_N 
		\oint dw_1 \ldots\oint dw_N 
		\,
		\frac{\prod_{i<j}(z_i-z_j)^2}{\prod_{i\ne j}(z_i-qz_j)}
		\\&\hspace{40pt}\times
		\frac{\prod_{i<j}(w_i-w_j)^2}{\prod_{i\ne j}(w_i-qw_j)}
		\prod_{j=1}^{N}\frac{1-1/q}{(1-z_j)(1-z_j/q)}
		\prod_{j=1}^{N}\frac{1-1/q}{(1-w_j)(1-w_j/q)}
		\\&\hspace{40pt}\times
		\exp\{t_1\,\mathsf{ev}(\vec z)+(t_2-t_1)\,\mathsf{ev}(\vec w)\}
		\,
		\Psi^r_{\vec x}(\vec z)
		\,
		\Psi^\ell_{\vec x'}(\vec z)
		\Psi^r_{\vec x'}(\vec w)
		\,
		\Psi^\ell_{\vec x''}(\vec w)
		.
	\end{align*}
	All integration contours are small positive circles around $1$.
	By deforming the $z_i$ contours to be sufficiently closer to $1$ than the $w_j$ ones (the $z$ variables are so far independent from the $w$'s), 
	we can make sure that on the new contours we have
	\begin{equation*}
		\left|\frac{(1-z_i)(1-w_j/q)}{(1-w_j)(1-z_j/q)}\right|<1,\qquad 
		\left|\frac{1-w_i}{1-w_i/q}\right|<1
		\qquad \textnormal{for all $i,j$}.
	\end{equation*}
	This allows to bring the summations inside the integrals, 
	and apply \Cref{cor:ASEP_summation_formula} (with \Cref{cor:shift_inv}) and \Cref{prop:TW_magic_identity}.
	After simplifications we obtain the desired formula.
\end{proof}

In \cite{TW_ASEP1},
a single-time formula for ASEP
(which is essentially \eqref{eq:ASEP_transition_function})
was transformed, in the special case of the step initial data $x_i(0)=i$, $i=1,\ldots,N $, and $N\to+\infty$,
to a Fredholm determinantal type formula for the distribution
$\mathrm{Prob}(x_m(t)=k)$ of the $m$-th particle for arbitrary $m$.
This allowed to perform, in \cite{TW_ASEP2}, an asymptotic analysis of this distribution, 
and obtain the GUE Tracy--Widom fluctuation behavior on the scale $t^{1/3}$.
The two-time formula of \Cref{thm:ASEP_two_time}
is more complicated: it contains a nontrivial Izergin--Korepin type determinant under the integral.
It is not clear yet how to proceed 
to asymptotic results from this formula.
It is worth noting that in determinantal models
such as the last-passage percolation, 
two- (and multi-)time and asymptotic behavior was investigated
in, e.g.,
\cite{johansson2015two}, \cite{JohanssonRahman2019}, 
and \cite{baik2019multipoint}.
At the same time in non-determinantal models
such as ASEP and the six-vertex model,
multitime and multipoint asymptotic analysis is severely limited,
cf. \cite{dimitrov2020two}.

\medskip

\textsc{L. Petrov, University of Virginia, Department of Mathematics, 141 Cabell Drive, Kerchof Hall, P.O. Box 400137, Charlottesville, VA 22904, USA, and Institute for Information Transmission Problems, Bolshoy Karetny per. 19, Moscow, 127994, Russia}

E-mail: \texttt{lenia.petrov@gmail.com}

\end{document}